\definecolor{dblue}{rgb}{0,0,0.70}
\newtheorem{lemma}{Lemma}[section]
\newtheorem*{lemma*}{Lemma}
\newtheorem{theorem}[lemma]{Theorem}
\newtheorem{corollary}[lemma]{Corollary}
\newtheorem{proposition}[lemma]{Proposition}
\newtheorem{claim}[lemma]{Claim}
\newtheorem*{question*}{Question}
\newtheorem{definition}[lemma]{Definition}
\newtheorem*{definition*}{Definition}
\newtheorem{question}{Question}
\DeclareMathOperator{\On}{On}
\DeclareMathOperator{\supp}{supp}
\DeclareMathOperator{\Lim}{Lim}
\DeclareMathOperator{\Card}{Card}
\DeclareMathOperator{\Reg}{Reg}
\DeclareMathOperator{\id}{id}
\DeclareMathOperator{\Omache}{O^{Machete_\alpha}}
\DeclareMathOperator{\Ome}{O^{M\alpha}}
\DeclareMathOperator{\Mmuind}{M_{\mu-Ind}}
\DeclareMathOperator{\Mmuindtwo}{M^2_{\mu-Ind}}
\DeclareMathOperator{\Ult}{Ult}
\DeclareMathOperator{\ZFC}{\axiomft{ZFC}}
\DeclareMathOperator{\MK}{MK}
\DeclareMathOperator{\Ominf}{O^{M\infty}}
\DeclareMathOperator{\Hype}{Hype}
\newcommand{\PP}{\mathbb{P}}
\newcommand{\QQ}{\mathbb{Q}}
\newcommand{\axiomft}[1]{\mathsf{#1}}
\newcommand{\forces}{\mathrel{\Vdash}}
\DeclareMathOperator{\crit}{crit}
\newcommand{\cof}{\mathrm{Cof}}
\DeclareMathAlphabet{\mathbbold}{U}{bbold}{m}{n}
\newenvironment{enumerate-(a)}{\begin{enumerate}[label={\upshape (\alph*)}, leftmargin=2pc]}{\end{enumerate}}
\newenvironment{enumerate-(1)}{\begin{enumerate}[label={\upshape (\arabic*)}, leftmargin=2pc]}{\end{enumerate}}
\newenvironment{enumerate-(i)}{\begin{enumerate}[label={\upshape (\roman*)}, leftmargin=2pc]}{\end{enumerate}}
\def\ed{\end{document}}
\title{Of Mice and Machetes}
\author{Christopher Henney-Turner and Philip Welch}
\date{\today}
\thanks{The first-listed author wishes to acknowledge support from EPSRC Grant EP/Student 2123452, and from the Polish Academy of Science through NCN Grant WEAVE-UNISONO UMO-2021/03/Y/ST1/00281. The second-listed author wishes to acknowledge partial support for this paper from the Zukunftskolleg of the University of Konstanz, as well as from EPSRC Grant No: EP/V009001/1.}
\begin{document}
	\begin{abstract}
		Let $R$ be the class of regular cardinals which are not hyperinaccessible. We show that $L[R]$, and similar inner models in the $\alpha$-inaccessible hierarchy, can be generated by iterating a small ``machete'' mouse up through all the ordinals, and then taking a generic extension by a hyperclass Magidor iteration of Prikry forcings. We then show that such simple mice are themselves elements of $L[\Reg]$.
	\end{abstract}
	\maketitle

	\section{Introduction}
	
	In \cite{welch_HaertigPaper}, the second listed author proved the following result:
	
	\begin{quotation}
		Suppose there exists a mouse with an unbounded sequence of measurables in it. Then $L[\Card]$ is a generic extension of an iterate of the smallest such mouse.
	\end{quotation}
	
	In this paper, we shall prove an analogous result where $\Card$ is replaced by a class $A$ of regular cardinals. To do this, we divide $A$ up by Cantor-Bendixson rank:

\begin{definition} If $B$ is a class of ordinals, we write $B^*$ to denote the class of limit points of $B$.
	
	Let $A\subseteq On$ be a class of ordinals.
	For $\alpha\in \On$, $A_{\alpha}$ is the class of ordinals in $A$ of Cantor-Bendixson (CB) rank $\alpha$.
		More formally, for $\alpha\in \On$, we recursively define $A_{<\alpha}$, $A_{\geq \alpha}$ and $A_{\alpha}$ as follows:
	$$A_{<\alpha}= \bigcup_{\beta<\alpha} A_{\beta}\, ;\,
	A_{\geq \alpha} = A\setminus A_{<\alpha}
	\, ;\,A_{\alpha} = A_{\geq\alpha} \setminus A_{\geq\alpha}^{\ast}.$$
\end{definition}

A case of particular interest is when we take $A$ to be the class of strong inaccessibles.

\begin{definition}
	$\Reg =\{\delta > \omega \mid \delta \mbox{ a successor cardinal or strongly inaccessible}\}$;
we then define $\Reg_{\alpha}$ {\em etc.} as above.
	\end{definition}

The motive for including the successor cardinals in $\Reg$ is that it allows $L[\Reg]$ to identify all the cardinals of $V$. In this paper, we will assume that all inaccessibles are strongly inaccessible. Our main theorem \ref{theorem L[R]} can also be used to obtain a similar statement about the class of weak inaccessibles if this is different.

The upper bound of inaccessibles we can identify in this way is known as the class of \textit{hyperinaccessibles}.

\begin{definition}
For the purposes of this paper, a \textit{hyperinaccessible} is an inaccessible $\kappa$ such that $\kappa\in \Reg_\kappa$. $\Hype$ is the class of all hyperinaccessibles.
\end{definition}

A word of caution is warranted here, since there is some inconsistency regarding the definition of a hyperinaccessible in the literature. Some authors define hyperinaccessibles as we do here, while others call any element of $\Reg_{\geq 2}$ a hyperinaccessible.

The main goal of this paper is to prove the following:

\begin{quotation}
	Let $R=\Reg\setminus \Hype$, or $R=\Reg_{<\alpha}$ where $0<\alpha<\aleph_\omega$. 	Suppose there exist mice containing ``enough" measurables (in a sense to be specified shortly). Then $L[R]$ is a generic extension of a class length iterate of the smallest mouse with ``enough" measurables.
\end{quotation}

If we take $\alpha=1$ then this gives us the main result from \cite{welch_HaertigPaper}. The mouse used in that case is called O-Kukri, since it fits somewhere between O-Dagger and O-Sword. The mice we need to use will contain more measurables, so they will be somewhat larger than O-Kukri; but they are still strictly smaller than O-Sword. Continuing the established pattern, we shall name them after a weapon which is somewhere between a kukri and a sword in size -- a machete.

In the following definition, and indeed the rest of this paper, we shall assume the reader is familiar with the core model theory for measure sequences of order zero, as set out in \cite{zemanInnerModelsAndLargeCardinals} and used in \cite{welch_HaertigPaper}. All the mice in this paper should be taken to be below $O^{\text{Sword}}$ unless otherwise specified. Recall that a mouse $M$ is said to be \textit{active} if it has a measure on $\kappa_{M}$ its largest cardinal.\cite{zemanInnerModelsAndLargeCardinals}

\begin{definition} [$\Omache$, or $\Ome$ for short.]
	Let $\alpha\in \On$. $\Omache$ is the $<^{\ast}$-least active sound mouse $M$   whose largest measurable $\kappa_M$ 
	has Cantor-Bendixson rank $\alpha$ in the class of $M$-measurable cardinals. 
	
\end{definition}
To spell things out explicitly, $\Ome$ is the least active sound mouse $M$ whose extender sequence $E^{M}$  below  $\kappa_{M}$ contains unboundedly many measurables of all Cantor-Bendixson ranks below $\alpha$. That is, if $C\subset \kappa_{M}$ is the class of $M$-measurable cardinals, then defining (within $M$, which we may) the sets $C_{\beta}$ of CB rank $\beta$ for $\beta < \alpha$ we have that for all $	\beta<\alpha$ that $C_{\beta}$ is cofinal below  $\kappa$. We can easily see that if $M$ is the least mouse with this property, then there are no elements of $C$ of CB rank $\alpha$; otherwise we could produce a smaller sound active mouse with the same property by cutting off $M$ at such a measurable.

The natural limit of this class of mice is a mouse which contains unboundedly many measurables of \textit{all} Cantor-Bendixson ranks (up to its top measure):

\begin{definition}
	$\Ominf$ is the $<^*$-least active sound mouse $M$ such that $\kappa_M$ has CB rank $\kappa_M$ in the class of $M$-measurable cardinals.	
\end{definition}

We prove some simple properties of these mice.

\begin{proposition}\label{proposition basic properties of machetes}
\begin{enumerate}
	\item Let $\alpha \in \On$. Suppose that an active (but not necessarily sound) mouse $M$ exists with $\kappa_M$ of CB rank $\geq \alpha$. Then $\Ome$ exists, and is $\leq^*M$. Moreover, $\Ome$ contains no measurables $\leq \alpha$, and contains no measurables of CB rank $\alpha$ below $\kappa_{\Ome}$. Finally, $\rho^1_{\Ome}\leq \alpha$, and $\lvert \Ome \rvert= \max(\aleph_0,\lvert \alpha\rvert)$.
	\item Suppose that an active (but not necessarily sound) mouse $M$ exists with $\kappa_M$ of CB rank $\kappa_M$. Then $\Ominf$ exists, and is $\leq^*M$. Moreover, $\Ominf$ contains no measurables equal to their own CB rank below $\kappa_{\Ominf}$. Finally, $\rho^1_{\Ominf}=\omega$, and $\lvert \Ome \rvert= \aleph_0$.
	\item If $\Ominf$ exists, then for all $\alpha \in \On$, $\Ome$ exists and $\Ome <^* \Ominf$
\end{enumerate}
\end{proposition}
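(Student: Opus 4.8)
I would treat parts (1) and (2) in parallel and derive (3) from (1).

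\textbf{Step 1: existence and $\Ome\leq^* M$.} Starting from an active (possibly unsound) mouse $M$ with $\kappa_M$ of CB rank $\geq\alpha$ in its class $C^M$ of measurables, I would first reduce to the case that $\kappa_M$ has CB rank exactly $\alpha$: CB rank is local (the rank of $\nu$ in a set $X$ of ordinals depends only on $X\cap(\nu+1)$), so if $\kappa_M$ has CB rank $>\alpha$ there is a measurable $\nu<\kappa_M$ of CB rank exactly $\alpha$, and truncating $M$ at the measure on $\nu$ yields an active mouse, an initial segment of $M$, whose top measurable has CB rank $\alpha$. Next I would pass to the core $\mathfrak C(M)$: for mice below $O^{\text{Sword}}$ the core of an active mouse is active, with the core embedding $\pi\colon\mathfrak C(M)\to M$ sending the top measure to the top measure; since $\nu\mapsto(\text{CB rank of }\nu\text{ among the measurables below the top})$ is uniformly $\Sigma_1$-definable over such mice (with the top measurable as a parameter) and is respected by $\pi$, the core $\mathfrak C(M)$ is a \emph{sound} active mouse with top measurable of CB rank $\alpha$, and $\mathfrak C(M)\leq^* M$. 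As $<^*$ wellorders the sound mice, there is a $<^*$-least sound active mouse with top measurable of CB rank $\geq\alpha$; this is $\Ome$, the truncate-and-core argument shows its top measurable has CB rank exactly $\alpha$ (in particular there is no measurable of CB rank $\alpha$ below $\kappa_{\Ome}$, as noted above), and $\Ome\leq^*\mathfrak C(M)\leq^* M$. For (2) I would argue identically with ``$\kappa_M$ of CB rank $\kappa_M$'' in place of ``CB rank $\alpha$'' throughout; here the defining property is already exact, since a measurable cannot have CB rank exceeding itself.

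\textbf{Step 2: the internal bounds.} The crux is the claim $\Ome=H_1^{\Ome}(\alpha\cup\{p_{\Ome}\})$, with $p_{\Ome}$ the standard parameter. Granting it, $\rho^1_{\Ome}\leq\alpha$ (read, as usual, as $\rho^1_{\Ome}=\omega$ when $\alpha$ is finite) and $|\Ome|\leq\max(\aleph_0,|\alpha|)$; conversely $|\Ome|\geq\max(\aleph_0,|\alpha|)$, since $\Ome$ has a measurable of each CB rank $<\alpha$ and these are distinct, forcing $\kappa_{\Ome}$ above $|\alpha|$-many ordinals. To prove the claim I would let $N=H_1^{\Ome}(\alpha\cup\{p_{\Ome}\})$ and let $\sigma\colon\bar N\to\Ome$ be the anticollapse, so $\bar N$ is a sound mouse, $\sigma$ is the identity on $\alpha$, and $\bar N\leq^*\Ome$; since $p_{\Ome}$ codes the top extender, $\bar N$ is active, with top measurable $\bar\kappa$ and $\sigma(\bar\kappa)=\kappa_{\Ome}$. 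For each $\beta<\alpha$ and $\delta\in N\cap\kappa_{\Ome}$, the least measurable of $\Ome$ of CB rank $\beta$ above $\delta$ is definable over $\Ome$ from $\beta$ and $\delta$, hence lies in $N$; as $\Ome$ has rank-$\beta$ measurables cofinal below $\kappa_{\Ome}$, these lie cofinally in $\sup(N\cap\kappa_{\Ome})$, so through $\sigma$, $\bar N$ has rank-$\beta$ measurables cofinal below $\bar\kappa$ for every $\beta<\alpha$; hence $\bar\kappa$ has CB rank $\geq\alpha$ in $C^{\bar N}$, so $\Ome\leq^*\bar N$ by minimality, and with $\bar N\leq^*\Ome$ and both sound, $\bar N=\Ome$. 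The same argument with no ordinal generators, $N=H_1^{\Ominf}(\{p_{\Ominf}\})$, yields $\Ominf=H_1^{\Ominf}(\{p_{\Ominf}\})$, hence $\rho^1_{\Ominf}=\omega$ and $|\Ominf|=\aleph_0$, proving (2). For the last clause of (1), that $\Ome$ has no measurable $\leq\alpha$, I would add a minimality argument: were $\mu\leq\alpha$ the least measurable of $\Ome$, form $N'=\Ult(\Ome,E^{\Ome}_{\mu})$; since $\kappa_{\Ome}$ is inaccessible far above $\mu$ it is fixed by the ultrapower map, so $N'$ is active with top measurable $\kappa_{\Ome}$ of CB rank $\geq\alpha$ but least measurable $>\mu$, and truncating $N'$ at a measurable of CB rank exactly $\alpha$ below $\kappa_{\Ome}$ and taking the core gives a sound active mouse with top measurable of CB rank $\alpha$ that is $<^*\Ome$ (being the core of a proper initial segment of an iterate of $\Ome$) — contradicting minimality; the case where the ultrapower fixes $\alpha$ needs a separate look at the core embedding of $N'$.

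\textbf{Step 3: part (3).} Given $\alpha\in\On$, I would iterate $\Ominf$ linearly by its top measure: the iterates $\Ominf^{(\xi)}$ have top measurables cofinal in $\On$, so fix $\xi$ with $\kappa_{\Ominf^{(\xi)}}>\alpha$. By elementarity $\Ominf^{(\xi)}$ is active with top measurable of CB rank $\kappa_{\Ominf^{(\xi)}}$, so it has measurables of every CB rank $<\kappa_{\Ominf^{(\xi)}}$ cofinal below its top; pick $\nu<\kappa_{\Ominf^{(\xi)}}$ of CB rank exactly $\alpha$. Applying Step 1 to the active mouse $\Ominf^{(\xi)}$ shows $\Ome$ exists and $\Ome\leq^*\mathfrak C(\Ominf^{(\xi)}\restriction\nu)\leq^*\Ominf^{(\xi)}\restriction\nu$; but $\Ominf^{(\xi)}\restriction\nu$ is a proper initial segment of $\Ominf^{(\xi)}\equiv^*\Ominf$, hence $<^*\Ominf$, so $\Ome<^*\Ominf$.

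\textbf{Expected main obstacle.} The real content, beyond routine bookkeeping, is the package of ``preservation'' facts used above: that the core (respectively, a $\Sigma_1$-hull, respectively, an internal ultrapower) of one of these mice is again active with its top measurable of the prescribed CB rank. This rests on the locality of CB rank together with the uniform (boldface-)$\Sigma_1$-definability of the CB-rank function over mice below $O^{\text{Sword}}$, and demands care about precisely how much elementarity the relevant embeddings carry. The clause ``$\Ome$ has no measurable $\leq\alpha$'' looks the most delicate: the natural ultrapower argument splits according to whether $\alpha$ is moved, and the fixed-point case must be closed by a finer analysis of the core of the ultrapower.
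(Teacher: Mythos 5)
Your Steps 2 and 3, and your ultrapower argument for ``no measurables $\leq\alpha$'', are essentially the paper's proof (the paper handles the latter by iterating the small measurable and then cutting at a rank-$\alpha$ measurable; your one-step ultrapower does the same job, and your worried ``case where the ultrapower fixes $\alpha$'' never occurs, since $\crit = \mu \leq \alpha$ forces $j(\alpha)>\alpha$). The genuine gap is in Step 1, at the claim that the core $\mathfrak{C}(M)$ of an active mouse whose top measurable has CB rank (exactly) $\alpha$ is again active with top measurable of CB rank $\alpha$. Preservation of the $\Sigma_1$-definable rank function by the core map does not give this: the ordinal $\alpha$ need not lie in the range of the core embedding at all, and when it does not, the rank of the core's top measurable is the \emph{preimage} of $\alpha$-type data and can drop strictly below $\alpha$. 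Concretely, take $\alpha=\omega_1$ and let $M$ be the $\omega_1$-th iterate of $\Ominf$ by its top measure: its top measurable is the ordinal $\omega_1$ and has CB rank exactly $\omega_1$, no measurable below the top has rank $\geq\omega_1$ (so your truncation step is unavailable), $M$ is unsound, and its core is $\Ominf$, whose top measurable has countable CB rank. So your route produces no sound active mouse with top of rank $\geq\omega_1$ from this $M$, and the existence claim in Step 1 is not established. (The proposition itself is of course fine here, since $\Ome <^* \Ominf =^* M$; it is your derivation that breaks.)

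The repair is exactly the device the paper uses, and which you yourself deploy in Step 2: instead of coring, take the transitive collapse $N$ of the $\Sigma_1$ hull of $\alpha+1$ together with $\{p_M,\kappa_M\}$, so that $\alpha$ is among the generators and is fixed by the collapse; then the least rank-$\beta$ measurable above any $\gamma$ in the hull lies in the hull for every $\beta<\alpha$, and $N$ is active with top of rank $\geq\alpha$. The paper also orders the argument differently in a way that matters for soundness of $N$: it first passes to the $<^*$-least equivalence class containing a mouse with the property, uses minimality to show that class's representative has no measurables of rank $\alpha$ below the top and no measurables $\leq\alpha$, and only then takes the hull, concluding $\rho^1_N\leq\alpha$, soundness, and $N=\Ome$ in one stroke. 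If you adopt that ordering, your Step 2 becomes part of Step 1 and the core step can be dropped entirely.
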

\begin{proof}
	\begin{enumerate}
	\item Without loss of generality, assume $M$ is in the $<^*$ least equivalence class containing a mouse with these properties. We will show that $[M]$ contains a sound mouse $N$ such that $\kappa_N$ has CB rank $\alpha$; it will then follow immediately that $N=\Ome$.
	
	Notice that $M$ has no measurables below $\kappa_M$ of rank $\alpha$; otherwise we could cut down $M$ at such a measurable and contradict minimality. Hence $\kappa_M$ has CB rank exactly $\alpha$. Similarly, $M$ does not contain any measurables $\lambda\leq \alpha$; or we could iterate $\lambda$, $\alpha$ many times, to produce a mouse $=^*$ to $M$ where $\kappa_M$ had CB rank $>\alpha$.
	
	Let $N$ be the transitive collapse of the $\Sigma_1$ hull of $\alpha+1$ together with $\{p_M,\kappa_M\}$. Then $\rho_N^1\leq \alpha$, and $N$ contains no measurables $\leq \alpha$. Hence $N$ is sound. By $\Sigma_1$ elementarity $N$ is active, and its top measure $\kappa_N$ has CB rank $\alpha$ in its sequence of measurables. It follows immediately that $N=\Ome$, and by construction we get that $N$ has no measurables below $\alpha$ or of CB rank $\alpha$ below $\kappa_N$, and that $\lvert N \rvert = \max(\aleph_0,\lvert \alpha,\rvert)$.
	
	\item Similar to the $\Ome$ case.
	
	\item Simply iterate the top measure of $\Ominf$ until it is above $\alpha$; the top measure of the resulting mouse $M$ is equal to its own CB rank and hence has CB rank above $\alpha$. Hence by the first part of the proposition, $\Ome$ exists and $\Ome<^* M$.
	\end{enumerate}
\end{proof}

The first main result of this paper is the following theorem:\\

\noindent{\bf Theorem \ref{theorem L[R]} (Special Case)} {\em 
	\noindent \begin{itemize}
		\item Suppose $\Ominf$ exists. Then there exists an iteration $\mathcal{I}=\langle M_i,\pi_{i,j}\rangle$ of $M_0=\Ome$, of length $\On+1$, such that $L[\Reg\setminus \Hype]$ is a generic extension of $M_{\On}$ by a hyperclass forcing.
		\item Let $\alpha\in \aleph_\omega$, and suppose $\Ome$ exists. Then there exists an iteration $\mathcal{I}=\langle M_i,\pi_{i,j}\rangle$ of $M_0=\Ome$, of length $\On+1$, such that $L[\Reg_{<\alpha}]$ is a generic extension of $M_{\On}$ by a hyperclass forcing, and another similar iteration such that $L[\Reg_{<\alpha}^w]$ is a generic extension of $M_{\On}$.
	\end{itemize}}

The machete mice defined above can be considered a special case of a more general concept: that any sufficiently simple $=^*$ equivalence classes of mice can be defined as the least mouse whose top measure satisfies some first order statement (with parameters). The threshold below which such definitions can be made is the first mouse which reflects all formulas true of its top measure down to some smaller measurable: the measurables of $M$ below $\kappa_M$ form an indescribable class.

\begin{definition}\label{Reflect}
	We define $\Mmuind$ to be the least active sound mouse whose first order $\mathcal{L}_{{\dot E}}$ formulae about its top measurable $\kappa$ all reflect down to some smaller measurable. That is, $\Mmuind$ is the least active mouse $N$ with top measurable $\kappa_{N}$ such that for any formula $\varphi(v_0,\ldots,v_n)\in \mathcal{L}_{{\dot E}}$ and for any $\gamma_1<\ldots<\gamma_n<\kappa_{N}$, if $H_{\kappa^+_N}^N\vDash \varphi(\gamma_1,\ldots,\gamma_n,\kappa_{N})$ then we can find some $\lambda\in (\gamma_n,\kappa_{N})$ such that $N$ believes $\lambda$ is measurable and $H_{\lambda^+}^N\vDash\varphi(\gamma_1,\ldots,\gamma_n,\lambda)$.
\end{definition}

\noindent Note that these formulae do not contain a predicate latter for the top measure $F_{M}$. Note also that $\Mmuind<^{\ast} O^{\text{sword}}$ (if both exist), and that $\rho^1_{\Mmuind}=\omega$. As in Proposition \ref{proposition basic properties of machetes}, if $M$ is any mouse which reflects all its formulas in the same manner as $\Mmuind$, then $\Mmuind \leq^* M$. This choice of mouse and its indescribability is merely a simple stopping point, as we could provide further arguments for indescribability with respect to higher order formulae. 

\begin{proposition}\label{proposition machete below muind}
	$\Ominf<^* \Mmuind$. Hence, $\Ome<^* \Mmuind$ for all $\alpha$.
\end{proposition}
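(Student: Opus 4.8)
The plan is to reduce, via parts~(2) and~(3) of Proposition~\ref{proposition basic properties of machetes}, to the task of producing a single active mouse $\bar M <^* \Mmuind$ whose top measurable $\kappa_{\bar M}$ has CB rank $\kappa_{\bar M}$ in the class of $\bar M$-measurables. Given such an $\bar M$, part~(2) yields that $\Ominf$ exists with $\Ominf \leq^* \bar M <^* \Mmuind$, hence $\Ominf <^* \Mmuind$, and part~(3) then yields $\Ome <^* \Ominf <^* \Mmuind$ for every $\alpha$. I would find $\bar M$ inside $\Mmuind$ itself: writing $\kappa := \kappa_{\Mmuind}$, the goal becomes to locate a measurable $\lambda < \kappa$ of $\Mmuind$ whose CB rank in the $\Mmuind$-measurables equals $\lambda$, and then to take $\bar M$ to be the proper initial segment of $\Mmuind$ that is active with top measurable $\lambda$. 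Proper initial segments are $<^*$-smaller, so $\bar M <^* \Mmuind$; and the $\bar M$-measurables are exactly the $\Mmuind$-measurables below $\lambda$, so $\kappa_{\bar M} = \lambda$ has CB rank $\lambda$ in them.

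To invoke the reflection property of $\Mmuind$ I first note that, for ordinals $\gamma, \mu < \kappa$, both ``$\mu$ is measurable'' and ``$\mu$ has CB rank $\geq \gamma$'' are expressible by first-order $\mathcal{L}_{{\dot E}}$ formulas over $H_{\kappa^+}^{\Mmuind}$, uniformly in $\gamma$ and $\mu$, and likewise over $H_{\lambda^+}^{\Mmuind}$ whenever $\gamma, \mu < \lambda$: measurability is read off $\dot E$, and the $\gamma$-th iterate of the limit-point operation on the measurables below $\mu$ is computed by a recursion of length $\gamma$ which can be carried out within $H_{\kappa^+}^{\Mmuind}$ (respectively $H_{\lambda^+}^{\Mmuind}$) because $\kappa$ (respectively $\lambda$) is inaccessible in $\Mmuind$; moreover this computation is absolute between $\Mmuind$ and its initial segments.

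The crux of the argument --- and the step I expect to be the main obstacle --- is to show that $H_{\kappa^+}^{\Mmuind}$ satisfies ``for every $\gamma < \kappa$, the measurables of CB rank $\geq \gamma$ are cofinal below $\kappa$'', that is, that $\kappa$ has CB rank $\geq \kappa$. A direct induction on $\gamma$ fails --- knowing each lower level to be cofinal below $\kappa$ says nothing about its cofinality below smaller measurables --- so the statement has to be wrung out of reflection. I would proceed by contradiction. Let $\gamma_0 < \kappa$ be least such that the measurables of CB rank $\geq \gamma_0$ are bounded below $\kappa$, say by $\delta_0 < \kappa$. By minimality of $\gamma_0$, the assertion ``the measurables of CB rank $\geq \gamma'$ are cofinal below $x$'' is true of $x = \kappa$ for each $\gamma' < \gamma_0$ (and in the limit case, for all $\gamma' < \gamma_0$ at once). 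Applying reflection to this assertion, with $\delta_0$ carried along as a further parameter so that the reflection clause forces the witness above $\delta_0$, produces a measurable $\lambda \in (\delta_0, \kappa)$ of $\Mmuind$ below which the lower-rank measurables remain cofinal --- and hence, unwinding the definition, with $\lambda$ itself of CB rank $\geq \gamma_0$, against the choice of $\delta_0$. The degenerate case $\gamma_0 = 0$ is dealt with the same way, and incidentally shows that $\Mmuind$ has measurables cofinally below $\kappa$ in the first place. What needs care here is the parameter bookkeeping in the reflection clause, and checking that the reflected formula really pins down the CB rank of the witness it delivers.

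With this in hand, a last application of reflection finishes the proof: applied to the parameter-free formula ``for every $\gamma < x$, the measurables of CB rank $\geq \gamma$ are cofinal below $x$'' --- which now holds of $\kappa$ --- it delivers a measurable $\lambda < \kappa$ of $\Mmuind$ of CB rank $\geq \lambda$, hence, since CB rank never exceeds the ordinal in question, of CB rank exactly $\lambda$. Taking $\bar M$ to be the initial segment of $\Mmuind$ active at this $\lambda$ completes the argument as in the first paragraph.
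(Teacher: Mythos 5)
Your proposal is correct, but it runs in the opposite direction to the paper's argument. The paper proceeds negatively: it exhibits explicit formulas that fail to reflect --- ``$v_0$ has CB rank $v_0$ in the measurables below $v_0$'' for $\Ominf$ and its simple iterates, and, for an arbitrary active $M<^*\Ominf$, a formula counting the measurables of top rank $\alpha$ above $\gamma=\sup C_{>\alpha}$ --- so that no active mouse $\leq^*\Ominf$ has the reflection property, and then concludes $\Ominf<^*\Mmuind$ from comparability of mice. You instead use the reflection property of $\Mmuind$ positively: reflecting (with the bound $\delta_0$ carried as a dummy parameter so the witness lands above it) the statement that all lower CB levels are cofinal, you get that $\kappa_{\Mmuind}$ has CB rank $\kappa_{\Mmuind}$, reflect once more to obtain a measurable $\lambda<\kappa_{\Mmuind}$ equal to its own CB rank, and cut $\Mmuind$ at that measure to get an active sound $\bar M<^*\Mmuind$ to which Proposition \ref{proposition basic properties of machetes}(2),(3) applies. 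Both routes rest on the same expressibility/absoluteness of CB-rank statements over $H^N_{\kappa^+}$ that the paper records. Your route buys the existence of $\Ominf$ (hence of every $\Ome$) outright from the existence of $\Mmuind$, and needs only transitivity of $<^*$ plus the fact that a proper active initial segment is $<^*$ the whole mouse; the paper's route is shorter and yields the slightly sharper information that nothing $\leq^*\Ominf$ reflects.
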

\begin{proof}
	Clearly, $\Ominf \neq \Mmuind$: take $\varphi(v_0)$ to be ``$v_0$ has CB rank $v_0$ in the class of all measurables below $v_0$''. (Note that for any mouse $N$ and $\kappa\in N$, we can express statements about the measurables below $\kappa$ in $\mathcal{L}_{\dot{E}}$ formulas about $H^N_{\kappa^+}$ $\kappa$.) Since this property is also shared by all simple iterates of $\Ominf$ we know $\Ominf \neq^* \Mmuind$.
	
	Suppose $M<^* \Ominf$ be active. Let $C$ be the set of measurables in $M$ (including $\kappa_M$ itself). Let $\alpha<\kappa_M$ be the CB rank of $\kappa_M$. Then $\gamma:=\sup C_{>\alpha}<\kappa_M$. Let $\beta$ be the order type of $(C\setminus \gamma)_\alpha$.
	Let $\varphi(\alpha,\beta,\gamma,\kappa)$ be the formula ``The class $M$ of all measurables between $\gamma$ and $\kappa$, together with $\kappa$ itself, contains $\beta$ many elements of CB rank $\alpha$.''
	
	Then $H^M_{\kappa_M^+} \vDash \varphi(\alpha,\beta,\gamma,\kappa_M)$, but if $\lambda\in (\max(\alpha,\beta,\gamma),\kappa_M)$ is a measurable of $M$ then $H^M_{\lambda^+} \neg\vDash \varphi(\alpha,\beta,\gamma,\lambda_M)$.
\end{proof}

Our second major result is that every mouse below this threshold (and in particular, every machete mouse) can be found inside the inner model $L[\Reg]$ we have been generating fragments of. To know that the classes of regular cardinals we have just considered exist, as well as to maximise possible complexity, we adopt the hypothesis that $\On$ is Mahlo; i.e., that $\Reg$ is stationary. This means that any closed and unbounded definable class $C$ contains a regular cardinal. We adopt this hypothesis without much further remark in the rest of this paper.\\

\noindent{\bf Theorem \ref{theorem friendly machetes}} {\em Suppose that $\On$ is Mahlo.  Then for any mouse $M$, $M <^* \Mmuind $ implies $ M<^*  
K^{L[\Reg]}$.}

\section{Magidor Forcings, Iterations and the Mathias Criterion}

The forcing we shall be using in proving Theorem \ref{theorem L[R]} will be a \textit{Magidor iteration} of Prikry forcings. Before we start proving the theorem, we shall lay some groundwork about these iterations for us to use later. First, let's define what the forcing actually is. Recall that a Prikry forcing singularises a measurable cardinal, and has two relations: the usual $\leq$ and the \textit{direct extension} $\leq^*$\cite{gitik_handbook}. The Magidor iteration singularises an infinite collection of measurables, by doing a Prikry forcing on each one. If we are working in $\ZFC$, then we can do an iteration for any set of measurables.

If we are in a model of $\MK$ then we can go further and define the iteration for a proper class. In that case, the conditions of the iteration will be proper classes, and the forcing itself will be a definable hyperclass. See \cite{antosFriedman_hyperclassForcing} for details on how hyperclass forcing is formalised.

\begin{definition}\cite{magidor_IteratedPrikryForcing_IdentityCrisis}
	Let $C$ be a set of measurables (if we are working in $\ZFC$) or a class of measurables (if we are working in $\MK$), and for $\kappa\in C$ let $U_\kappa$ be a normal measure on $\kappa$. We recursively define forcings $\PP_\kappa$ for $\kappa \in C\cup \{\sup C\}$ and $\PP_\kappa$ names $\dot{\QQ}_\kappa$ of Prikry forcings for $\kappa \in C$. The two definitions depend on one another, and so are part of the same recursion. The \textit{Magidor iteration} of $C$ is the forcing $\PP_{\sup C}$.
	
	$\PP_\kappa$: For $\kappa\in C\cup \{\sup C\}$, $\PP_\kappa$ consists of sequences $p=(p_\lambda)_{\lambda\in C\cap \kappa}$, such that:
	
	\begin{itemize}
		\item For all $\lambda\in C\cap \kappa$, $p{\upharpoonleft} \lambda := (p_\beta)_{\beta\in C\cap \lambda}\in \PP_\lambda$
		\item For all $\lambda \in C\cap \kappa$, $p{\upharpoonleft} \lambda\Vdash p_\lambda \in \dot{\QQ}_\lambda$
		\item For all but finitely many $\lambda \in C\cap \kappa$, $p{\upharpoonleft} \lambda \Vdash p_\lambda\leq^*_{\dot{\QQ}_\lambda} \mathbbold{1}_{\dot{\QQ}_\lambda}$
	\end{itemize}
	
	The order $\leq$ on $\PP_\kappa$ is defined in the natural way: if $p=(p_\lambda)_{\lambda\in C\cap \kappa}$ and $q=(q_\lambda)_{\lambda\in C\cap \kappa}$ then $q \leq p$ if for all $\lambda\in C\cap \kappa$, $q{\upharpoonleft} \lambda \Vdash q_\lambda \leq_{\dot{\QQ}_\lambda} p_\lambda$. We also define a second partial order $\leq^*$ on $\PP_\kappa$: $q\leq^* p$ if for all $\lambda\in C\cap \kappa$, $q{\upharpoonleft} \lambda \Vdash q_\lambda \leq^*_{\dot{\QQ}_\lambda} p_\lambda$.
	
	$\dot{\QQ}_\kappa$: For $\kappa \in C$, let $j_{U_\kappa}$ be the ultrapower map defined by $U_\kappa$. Let $\tilde{\PP}_\kappa=j_{U_\kappa}(\PP_\kappa)$. Let $\dot{U}_\kappa^*$ be the following $\PP_\kappa$ name:
	
	$$\dot{U}_\kappa^* = \{ (\dot{A},p): {\exists q\leq^*_{\tilde{\PP}_\kappa} (j_{U_\kappa}(p) \setminus \kappa)}, \,\, p^\smallfrown q \Vdash_{\tilde{\PP}_\kappa} \check{\kappa}\in j_{U_\kappa}(\dot{A}) \}$$
	
	It can be shown\cite[2.5]{magidor_IteratedPrikryForcing_IdentityCrisis} that $\dot{U}_\kappa^*$ is a name for a normal measure on $\kappa$ in the $\PP_\kappa$ generic extension, and that $\mathbbold{1}_{\PP_\kappa} \forces \dot{U}_\kappa^*\cap V = \check{U}_\kappa$.
	
	We define $\dot{\QQ}_\kappa$ to be a name for the Prikry forcing on $\kappa$ in the generic extension defined by $\dot{U}_\kappa^*$, and define $\leq_{\dot\QQ_\kappa}$ and $\leq^*_{\dot\QQ_\kappa}$ to be (names for) the two associated partial orders.
\end{definition}

The Magidor iteration adds a cofinal $\omega$ sequence below each measurable in $C$. Like a Prikry forcing, a generic filter is determined by the generic sequence (in $\prod_{\kappa\in C} \kappa^\omega$) it adds. Notice that in $\MK$, if $C$ is a proper class then the iteration is a hyperclass forcing and a generic filter is technically a hyperclass, but the sequence it adds is just a class.

We can also say a little more about the nature of the measure $\dot{U}_\kappa^*$.

\begin{proposition}\label{proposition magidor forcing measure generating sets}\cite{benNeria_forcingMagidorOverCoreModel}
	In the forcing defined above, let $\kappa \in C$. Let $G$ be $\PP_\kappa$ generic, and for $\lambda \in C\cap \kappa$ let $G_\lambda$ be the $\omega$ sequence added by $G$ below $\lambda$. Then $(\dot{U}_\kappa^*)^G$ is generated by the measure $1$ sets of $U_\kappa$, together with the set
	
	$$\Sigma_\kappa^G := \{\nu<\kappa: \forall \lambda \in C\cap \kappa\setminus \nu, \, (\nu+1)\cap G_\lambda =\emptyset\}$$
\end{proposition}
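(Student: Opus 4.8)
Following \cite{benNeria_forcingMagidorOverCoreModel}, the plan is to analyse the name $\dot U^*_\kappa$ through the ultrapower $j=j_{U_\kappa}\colon V\to M=\Ult(V,U_\kappa)$, where $\crit(j)=\kappa$, $M^\kappa\subseteq M$, and $M\nvDash\text{``}\kappa\text{ measurable''}$, so in particular $\kappa\notin j(C)$. First I would establish the factorisation $j(\PP_\kappa)\cong\PP_\kappa*\dot\QQ_{\mathrm{tail}}$, where $\dot\QQ_{\mathrm{tail}}$ names the Magidor iteration, as computed in $M[G]$, of the measurables $T:=j(C)\cap(\kappa,j(\kappa))$ with the normal measures inherited from $j(\langle U_\lambda\rangle_{\lambda\in C})$; I write $(p,q)$ for the condition $p{}^\smallfrown q$ of $j(\PP_\kappa)$ with $\PP_\kappa$-part $p$ and tail part $q$. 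I would record the routine facts that $M[G]^\kappa\subseteq M[G]$ in $V[G]$, that $\dot\QQ_{\mathrm{tail}}$ has the Prikry property over $M[G]$ with direct-extension order closed under descending sequences of length $<\min T$, and --- the observation on which everything rests --- that for $p\in G$ the tail condition $j(p)\restriction T$ is \emph{stemless} (empty stem at every coordinate): $p$ carries a non-trivial stem at only finitely many coordinates, all of them below $\kappa$ and hence fixed by $j$, so $j(p)\restriction T$ is just a $T$-indexed sequence of (names for) measure-one sets, and $j(p)\restriction\kappa=p$. Finally I would restate, from the definition and \cite[2.5]{magidor_IteratedPrikryForcing_IdentityCrisis}, the working criterion: for $A\subseteq\kappa$ with $A\in V[G]$,
\[
A\in(\dot U^*_\kappa)^G\iff\exists\,\dot A\ \bigl(\dot A^G=A\bigr)\ \exists\, p\in G\ \exists\, q\leq^* j(p)\restriction T:\ (p,q)\Vdash_{j(\PP_\kappa)}\check\kappa\in j(\dot A),
\]
together with the already-stated fact $\mathbbold{1}_{\PP_\kappa}\Vdash\dot U^*_\kappa\cap V=\check U_\kappa$.

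Second, I would show that the filter generated by $U_\kappa\cup\{\Sigma^G_\kappa\}$ is contained in $(\dot U^*_\kappa)^G$. Inclusion of $U_\kappa$ is immediate from $\mathbbold{1}\Vdash\dot U^*_\kappa\cap V=\check U_\kappa$. For $\Sigma^G_\kappa$, let $q^*$ be the stemless tail condition whose measure-one set at coordinate $\lambda\in T$ is $\lambda\setminus(\kappa+1)$ --- legitimate since $\lambda>\kappa$ is regular, so this set has bounded complement in $\lambda$ and thus measure one --- and note $q^*\leq^* j(\mathbbold{1}_{\PP_\kappa})\restriction T=\mathbbold{1}_{\dot\QQ_{\mathrm{tail}}}$. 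Then $(\mathbbold{1}_{\PP_\kappa},q^*)$ forces every generic $\omega$-sequence added below a coordinate of $T$ to lie entirely above $\kappa$. Letting $\dot\Sigma_\kappa$ be the canonical $\PP_\kappa$-name with $(\dot\Sigma_\kappa)^G=\Sigma^G_\kappa$, its image $j(\dot\Sigma_\kappa)$ names the set of $\nu<j(\kappa)$ such that for every $\lambda\in(j(C)\cap j(\kappa))\setminus\nu$ the generic $\omega$-sequence added below $\lambda$ misses $\nu+1$; for $\nu=\kappa$ the relevant $\lambda$'s are precisely those of $T$, so $(\mathbbold{1}_{\PP_\kappa},q^*)\Vdash\check\kappa\in j(\dot\Sigma_\kappa)$. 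By the criterion (with $p=\mathbbold{1}_{\PP_\kappa}\in G$), $\Sigma^G_\kappa=(\dot\Sigma_\kappa)^G\in(\dot U^*_\kappa)^G$.

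Third, for the reverse inclusion let $A\in(\dot U^*_\kappa)^G$, witnessed by $p\in G$, $\dot A$ and $q\leq^* j(p)\restriction T$ with $(p,q)\Vdash\check\kappa\in j(\dot A)$. Replacing $q$ by its coordinatewise meet with $q^*$ keeps it $\leq^* j(p)\restriction T$ (each coordinate stays of measure one) and only strengthens $(p,q)$, so we may assume $q\leq^* q^*$; then $(p,q)$ also forces $\check\kappa\in j(\dot\Sigma_\kappa)$. Writing $f(\nu)=\{r\in\PP_\kappa\mid r\Vdash_{\PP_\kappa}\check\nu\in\dot A\}$, one has $j(f)(\kappa)=\{[g]_{U_\kappa}\mid g\colon\kappa\to\PP_\kappa,\ \{\nu<\kappa\mid g(\nu)\in f(\nu)\}\in U_\kappa\}$, an open subset of $j(\PP_\kappa)$ with $\check\kappa\in j(\dot A)^{G'}$ iff $G'\cap j(f)(\kappa)\neq\emptyset$; hence $j(f)(\kappa)$ is dense below $(p,q)$. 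Forcing with $\dot\QQ_{\mathrm{tail}}$ over $M[G]$ below $q^G$ to obtain $H$, and choosing $[g]\in(G*H)\cap j(f)(\kappa)$, yields $B:=\{\nu<\kappa\mid g(\nu)\Vdash_{\PP_\kappa}\check\nu\in\dot A\}\in U_\kappa$, with the $\PP_\kappa$-part of $[g]$ lying in $G$. It then remains to show $B\cap\Sigma^G_\kappa\subseteq A$: for $\nu\in B\cap\Sigma^G_\kappa$ the defining feature of $\Sigma^G_\kappa$ --- that no coordinate of $G$ of index $\geq\nu$ has its generic $\omega$-sequence meeting $\nu+1$ --- is exactly what a density argument exploiting the structure of $G$ needs in order to place $g(\nu)$ in $G$, whence $\nu\in\dot A^G=A$. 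Together with the second step this proves the proposition.

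The main obstacle is the third step, and within it the passage from the forcing statement ``$(p,q)\Vdash_{j(\PP_\kappa)}\check\kappa\in j(\dot A)$'' over $M$ to a single $B\in U_\kappa$ with $B\cap\Sigma^G_\kappa\subseteq A$ inside $V[G]$. Reading $B$ off from the ultrapower is routine, but proving $g(\nu)\in G$ for $\nu\in B\cap\Sigma^G_\kappa$ is the heart of the matter: one will presumably first shrink $B$ by a pressing-down argument so that the stems of the conditions $g(\nu)$ are uniformly controlled, and then match $g(\nu)$ against $G$ through a genericity analysis using the precise form of $\Sigma^G_\kappa$ --- essentially a Mathias-style criterion for Magidor generics. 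One must also keep careful account of the finitely many non-stemless coordinates throughout and of the closure of $M[G]$, so that all coordinatewise intersections of measure-one sets stay in $M[G]$ and of measure one. Once the factorisation of the first paragraph is in hand, the remainder of the first two paragraphs is bookkeeping.
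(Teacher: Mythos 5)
The paper does not actually prove this proposition --- it is quoted from Ben Neria's work --- so your attempt has to be judged against the cited argument rather than anything in the text. Your framework (factoring $j(\PP_\kappa)$ through $j=j_{U_\kappa}$, noting that $j(p)\setminus\kappa$ is stemless for $p\in\PP_\kappa$, and reading off the membership criterion for $(\dot U^*_\kappa)^G$ from the definition of the name) is the right one, and your second step, showing that $U_\kappa\cup\{\Sigma^G_\kappa\}$ generates a subfilter of $(\dot U^*_\kappa)^G$, is essentially complete. One point you should make explicit: the step ``$M\nvDash\kappa$ measurable, so $\kappa\notin j(C)$'' is not automatic for an arbitrary Magidor iteration; it is exactly the order-zero hypothesis of Ben Neria's setting (and of this paper, where everything sits below $O^{\mathrm{Sword}}$). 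If $C\cap\kappa\in U_\kappa$ then the tail forcing has a coordinate at $\kappa$ itself, no condition forces $\check\kappa\in j(\dot\Sigma_\kappa)$, and the proposition as stated fails, so this hypothesis is doing real work.

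The genuine gap is the hard direction, and you acknowledge it yourself: nothing in your third step proves $B\cap\Sigma^G_\kappa\subseteq A$ (modulo a bounded part). The set-up you chose is the reason it cannot be finished as written. Taking $f(\nu)=\{r\in\PP_\kappa: r\Vdash\check\nu\in\dot A\}$ and extracting a single $[g]$ from an auxiliary generic $G*H$ gives you, for $U_\kappa$-many $\nu$, some condition $g(\nu)$ forcing $\check\nu\in\dot A$, but with no control whatsoever over the stems or measure-one sets of $g(\nu)$, so there is no reason for $g(\nu)$ to lie in, or even be compatible with, $G$; knowing $\nu\in\Sigma^G_\kappa$ does not help for an arbitrary $g(\nu)$, and no pressing-down on an uncontrolled $g$ will restore this. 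The argument that actually closes this is to reflect the witness $q\leq^* j(p)\setminus\kappa$ \emph{pointwise and in a structured form}: by \L{}o\'s, $B:=\{\nu<\kappa:\exists q_\nu\leq^*(p\setminus\nu)\ \text{with}\ (p{\upharpoonleft}\nu)^\smallfrown q_\nu\Vdash_{\PP_\kappa}\check\nu\in\dot A\}\in U_\kappa$, i.e.\ the witnesses agree with $p$ below $\nu$ and are \emph{direct} extensions of $p$ on $[\nu,\kappa)$. One then amalgamates the $q_\nu$ into a single direct extension $q^{**}\leq^* p$ by diagonally intersecting the measure-one sets coordinatewise (normality/completeness of each $U_\lambda$), and runs a density argument below $q^{**}$ inside $G$: for $\nu\in B\cap\Sigma^G_\kappa$ above the stem support of $p$, the generic stems at coordinates in $[\nu,\kappa)$ avoid $\nu+1$ and hence lie in the intersected measure-one sets, which makes the conditions of $G$ below $q^{**}$ compatible with $(p{\upharpoonleft}\nu)^\smallfrown q_\nu$ in a way that (together with the Prikry property for the tail) forces $\check\nu\in\dot A$, giving $\nu\in A$. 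This structural reflection and amalgamation is the heart of the proposition, and it is precisely the content your proposal defers; as it stands the attempt establishes only the easy inclusion.
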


As usual, we can prove the Prikry property:

\begin{lemma}\cite[2.1]{magidor_IteratedPrikryForcing_IdentityCrisis}
	Let $\varphi$ be some sentence (perhaps with parameters) in the language of $\PP_\kappa$, and let $p\in \PP_\kappa$. There is some $q\leq^* p$ such that either $q\forces \varphi$ or $q\forces \neg \varphi$.
\end{lemma}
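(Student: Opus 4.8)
The plan is to adapt Magidor's proof of the Prikry property for iterated Prikry forcing \cite{magidor_IteratedPrikryForcing_IdentityCrisis}, by transfinite induction on the order type of the index set $C\cap\kappa$ (for the hyperclass case $\kappa=\sup C=\On$ this ``order type'' is $\On$, placed above all the ordinals). Two ingredients are used. First, each $\dot\QQ_\lambda$ is forced to be a Prikry forcing for the $\lambda$-complete measure $\dot U^*_\lambda$, so its direct-extension order is $\lambda$-closed and closed under intersections of fewer than $\lambda$ measure-one sets; and --- crucially --- a direct extension never lengthens a stem, so a $\leq^*$-decreasing sequence of conditions of $\PP_\kappa$ keeps a fixed finite ``stemmed support'', and the coordinatewise direct-extension lower bounds of such a sequence therefore assemble, by recursion on the coordinate (the requisite facts at coordinate $\lambda$ being forced by the part already built below $\lambda$), into a genuine condition of $\PP_\kappa$. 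Second, by the inductive hypothesis every proper initial segment $\PP_\mu$ ($\mu\in C\cap\kappa$, or more generally $\mu$ an ordinal with $C\cap\mu$ of smaller order type) has the Prikry property. There is no useful chain condition here and none is needed: two conditions with incompatible stems at a common coordinate are incompatible, so $\PP_\kappa$ has antichains of size $\lambda$ for every measurable $\lambda\in C\cap\kappa$.

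The base case $C\cap\kappa=\emptyset$ is trivial. For the inductive step, suppose $p\in\PP_\kappa$ and $\varphi$ are given. If $C\cap\kappa$ has a largest element $\bar\kappa$, I would handle the top coordinate first via $\PP_\kappa\cong\PP_{\bar\kappa}\ast\dot\QQ_{\bar\kappa}$: the classical Prikry lemma applied in $V^{\PP_{\bar\kappa}}$ gives a direct extension of the top-coordinate part that decides $\varphi$, and the dependence of the resulting measure-one set on the $\PP_{\bar\kappa}$-generic is absorbed using the defining property of $\dot U^*_{\bar\kappa}$ --- it is read off from $j_{U_{\bar\kappa}}(\PP_{\bar\kappa})$, whose restriction to the coordinates below $\bar\kappa$ is $\PP_{\bar\kappa}$ itself, since $\crit(j_{U_{\bar\kappa}})=\bar\kappa$. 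This reduces ``$\varphi$'' to a statement in the language of $\PP_{\bar\kappa}$, which a direct extension there decides by the inductive hypothesis.

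The substantive case is $\kappa$ a limit point of $C$ (including $\kappa=\On$). Fix a continuous increasing sequence $\langle\mu_i:i<\cf(\kappa)\rangle$ cofinal in $C\cap\kappa$. One builds a $\leq^*$-decreasing sequence $\langle p^i\rangle$ with $p^0=p$ such that, at stage $i$, only measure-one sets at coordinates $\geq\mu_i$ are shrunk, and --- applying the Prikry property of $\PP_{\mu_i}$ (inductive hypothesis) to the statement ``the tail part of the current condition decides $\varphi$'', and using the first ingredient to turn the resulting decision into a single canonical tail condition witnessing it --- $p^{i+1}$ is arranged so that every decision of $\varphi$ obtainable by lengthening stems only below $\mu_i$ is already fixed by $p^{i+1}$ itself. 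Since any fixed coordinate $\lambda$ is acted on only at stages $i$ with $\mu_i\le\lambda$, hence fewer than $\lambda$ many times, the first ingredient yields $q:=\lim_i p^i\le^*p$, a genuine condition because its stemmed support stays finite and every other coordinate stays pure throughout. Finally $q$ decides $\varphi$: any $r\le q$ deciding $\varphi$ lengthens only finitely many stems, all below some $\mu_i$, so by the homogenisation carried out at stage $i$ the condition $q$ must already have decided $\varphi$ the same way.

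The main obstacle is organising this limit-stage fusion coherently: each stage must interleave a $\leq^*$-extension of the ``below $\mu_i$'' part (to force a decision, via the inductive Prikry property of $\PP_{\mu_i}$) with shrinkings of the ``above $\mu_i$'' measure-one sets (to turn that decision into a single tail condition working uniformly below $p^{i+1}\restriction\mu_i$), and one must check that such interleavings can be carried out --- this is where the completeness of the measures and the coordinatewise closure of the first ingredient are combined --- and that the limit is a legitimate condition, for which the ``no stem is ever lengthened'' observation is exactly what is needed. In the hyperclass case the fusion has length $\On$, but is carried out in the same way, each coordinate still being modified set-many (fewer than $\lambda$) times, and the conditions produced are legitimate hyperclass conditions precisely because their stemmed support remains finite.
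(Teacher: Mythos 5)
The paper gives no proof of this lemma at all --- it is quoted directly from Magidor's paper --- so your proposal has to be measured against the standard argument. Your overall architecture (induction on the order type of $C\cap\kappa$; successor step via $\PP_{\kappa}\cong\PP_{\bar\kappa}\ast\dot{\QQ}_{\bar\kappa}$, the classical Prikry lemma applied in $V^{\PP_{\bar\kappa}}$, and the inductive hypothesis; fusion at limits) is the right shape, but the limit case, which is the entire content of the lemma, has two genuine gaps. First, your bookkeeping is internally inconsistent, and the version you actually describe fails: you first stipulate that at stage $i$ only coordinates $\geq\mu_i$ are shrunk (which is what would make the limit a condition), but your last paragraph has each stage also performing a $\le^*$-extension of the part below $\mu_i$ in order to apply the inductive Prikry property of $\PP_{\mu_i}$. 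If the lower part is direct-extended at every stage, then a coordinate $\lambda$ of small completeness is modified at up to $\cf(\kappa)$-many stages, and since its measure is only $\lambda$-complete the coordinatewise intersection at the limit need not be measure one; the fusion then does not converge to a condition. The workable argument confines all stage-$i$ action to the tail, and even there the counting is delicate: there are up to $\mu_i$-many stem patterns below $\mu_i$ to homogenise over while the boundary coordinate has completeness only $\mu_i$, so one must use the \emph{normality} of the measures $\dot{U}^*_\lambda$ and diagonal intersections --- which your sketch never invokes; similarly, a fixed coordinate $\lambda$ can be touched at exactly $\lambda$ (not ``fewer than $\lambda$'') many stages unless the $\mu_i$ and the bookkeeping are arranged to prevent it.

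Second, the final step ``hence $q$ itself decides $\varphi$'' does not follow from anything the homogenisation can actually deliver. One cannot arrange that ``every decision of $\varphi$ obtainable by lengthening stems only below $\mu_i$ is already fixed by $p^{i+1}$'': below any condition, different stem additions may a priori force $\varphi$ in opposite directions, which is precisely why the Prikry property is nontrivial. What can be arranged is that the decision depends only on the added stem pattern and not on the further shrinking; to pass from that to a single $\le^*$-extension deciding $\varphi$ one still needs the stem-elimination argument --- removing the added stem points one coordinate at a time, using that the measure-one set at that coordinate has been refined (again via normality) so that all choices of the removed point yield the same decision, together with a density/compatibility argument --- and this is exactly where the real work in Magidor's Lemma 2.1 lies. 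As written, your closing sentence assumes the conclusion rather than proving it.
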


The Prikry forcing on $\kappa$ consists of two components: a finite stem which is an element of $\kappa^{<\omega}$, and a measure $1$ set $X$. So a condition $p=(p_\lambda)$ of the Magidor forcing can be rearranged into a sequence of names $(\dot{s}_\lambda)$ for stems $s_\lambda\in \lambda^{<\omega}$, and a sequence of names $(\dot{X}_\lambda)$ for measure $1$ subsets $X_\lambda\subset\lambda$.

\begin{lemma}
	Let $p,q\in \PP_\kappa$. Let $p=(\dot{s}_\lambda,\dot{X}_\lambda)_{\lambda\in C\cap \kappa}$, and let $q=(\dot{t}_\lambda,\dot{Y}_\lambda)_{\lambda\in C\cap \kappa}$.
	
	Suppose that for all $\lambda$, ${p{\upharpoonleft} \lambda} \forces \dot{s}_\lambda=\dot{t}_\lambda$, or ${q{\upharpoonleft} \lambda} \forces \dot{s}_\lambda=\dot{t}_\lambda$.
	
	Then $r:=(\dot{s}_\lambda, \dot{X}_\lambda\cap \dot{Y}_\lambda)_{\lambda\in C_\cap \kappa}$ is a condition of $\PP_\kappa$, and $r\leq^* p$ and $r\leq^* q$.
\end{lemma}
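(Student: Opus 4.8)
The plan is to verify the three asserted conclusions about $r$ directly from the definition of the Magidor iteration, proceeding by induction on $\lambda \in C \cap \kappa$ together with a side verification that $r$ really is a condition. The key observation driving everything is the hypothesis: for each $\lambda$, the stem-names $\dot s_\lambda$ and $\dot t_\lambda$ are forced to be equal by $p{\upharpoonleft}\lambda$ or by $q{\upharpoonleft}\lambda$. Since $r$ is defined to use the stem $\dot s_\lambda$ at coordinate $\lambda$, and since the Prikry direct extension $\leq^*$ on each $\dot\QQ_\lambda$ keeps the stem fixed and shrinks the measure-one set, the natural candidate for $r{\upharpoonleft}\lambda$ will by induction be a condition that forces $\dot s_\beta = \dot t_\beta$ for every $\beta < \lambda$ (on the side where $q$ does the work, one uses that $r{\upharpoonleft}\lambda \leq^* q{\upharpoonleft}\lambda$, hence $r{\upharpoonleft}\lambda \leq q{\upharpoonleft}\lambda$, to transfer the forced equality; on the side where $p$ does the work it is immediate since $r{\upharpoonleft}\lambda \leq p{\upharpoonleft}\lambda$).

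First I would set up the induction: assume $r{\upharpoonleft}\lambda \in \PP_\lambda$ with $r{\upharpoonleft}\lambda \leq^* p{\upharpoonleft}\lambda$ and $r{\upharpoonleft}\lambda \leq^* q{\upharpoonleft}\lambda$, and that $r{\upharpoonleft}\lambda$ decides the stem at each coordinate below $\lambda$ to be the common value $\dot s_\beta = \dot t_\beta$. Then check the three clauses of the definition of $\PP_\lambda$ for $r{\upharpoonleft}(\lambda^+)$ — i.e. that $r{\upharpoonleft}\lambda \Vdash r_\lambda \in \dot\QQ_\lambda$ and that $r_\lambda \leq^*_{\dot\QQ_\lambda} \mathbbold{1}$ for all but finitely many $\lambda$. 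For the first: $r{\upharpoonleft}\lambda$ extends both $p{\upharpoonleft}\lambda$ and $q{\upharpoonleft}\lambda$, hence forces $\dot X_\lambda$ and $\dot Y_\lambda$ to be $\dot U^*_\lambda$-measure-one, hence forces their intersection to be measure-one, and forces that $\dot s_\lambda$ (which equals $\dot t_\lambda$ on the relevant side) is a legitimate stem compatible with that set, so $r_\lambda = (\dot s_\lambda, \dot X_\lambda \cap \dot Y_\lambda)$ is forced to be a Prikry condition. For the finiteness clause: outside the finite set where $p$ has a nontrivial stem and the finite set where $q$ does, both $p_\lambda$ and $q_\lambda$ are direct extensions of $\mathbbold{1}$, meaning trivial stem and a measure-one set, so $r_\lambda$ is too — hence $r_\lambda \leq^*_{\dot\QQ_\lambda}\mathbbold{1}$ for all but finitely many $\lambda$. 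Finally, $r_\lambda \leq^*_{\dot\QQ_\lambda} p_\lambda$ and $r_\lambda \leq^*_{\dot\QQ_\lambda} q_\lambda$ hold because shrinking a Prikry measure-one set while fixing the stem is exactly a direct extension; this gives $r{\upharpoonleft}(\lambda^+) \leq^* p{\upharpoonleft}(\lambda^+)$ and $\leq^* q{\upharpoonleft}(\lambda^+)$, closing the induction. At limit stages and at $\kappa$ itself there is nothing new: membership in $\PP_\lambda$ and the relations $\leq^*$ are defined coordinatewise, so the limit condition is assembled from the already-verified coordinates.

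The main obstacle, and the only genuinely delicate point, is the interaction between the two "sides" of the stem hypothesis as one moves up the iteration: at a coordinate $\lambda$ where it is $q{\upharpoonleft}\lambda$ (not $p{\upharpoonleft}\lambda$) that forces $\dot s_\lambda = \dot t_\lambda$, one must be sure that $r{\upharpoonleft}\lambda$ — which we have only arranged to extend $q{\upharpoonleft}\lambda$ — actually does force this equality, and does so \emph{compatibly} with the fact that at coordinate $\lambda$ we are writing down the name $\dot s_\lambda$. This is why it is essential to carry "$r{\upharpoonleft}\lambda \leq^* q{\upharpoonleft}\lambda$" (not merely "$r{\upharpoonleft}\lambda \leq^* p{\upharpoonleft}\lambda$") through the induction, and to note that $\leq^*$ refines $\leq$ so that any statement forced by $q{\upharpoonleft}\lambda$ is forced by $r{\upharpoonleft}\lambda$. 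Once that bookkeeping is in place the argument is routine; the statement is essentially the Magidor-iteration analogue of the elementary fact that two Prikry conditions with a common stem are directly-compatible, lifted through the iteration one coordinate at a time.
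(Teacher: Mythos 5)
Your proposal is correct and follows essentially the same route as the paper: induction on the coordinates of $C\cap\kappa$, carrying the inductive hypothesis that $r{\upharpoonleft}\lambda\in\PP_\lambda$ with $r{\upharpoonleft}\lambda\leq^* p{\upharpoonleft}\lambda,\,q{\upharpoonleft}\lambda$, then at the next coordinate using that $r{\upharpoonleft}\lambda$ forces the stems equal and forces $\dot X_\lambda\cap\dot Y_\lambda$ to be $\dot U^*_\lambda$-measure one, with the limit case immediate from the coordinatewise definitions. Your extra bookkeeping (the finite-support clause and the transfer of the forced stem equality via $\leq^*$ refining $\leq$) is exactly what the paper leaves implicit.
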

\begin{proof}
	Induction on the order type of $C\cap \kappa$. The $0$ case is trivial: it boils down to saying that the intersection of two measure $1$ sets is measure $1$.
	
	Suppose $C\cap \kappa$ has a largest element $\lambda$. By inductive assumption ${r{\upharpoonleft} \lambda}\in\PP_\lambda$, and ${r{\upharpoonleft} \lambda}\leq^* {p{\upharpoonleft} \lambda}$ and ${r{\upharpoonleft} \lambda}\leq^*{ q{\upharpoonleft} \lambda}$. Then we know that ${r{\upharpoonleft} \lambda}\forces \dot{s}_\lambda=\dot{t}_\lambda\in \check{\lambda}^{<\omega}$, and that ${r{\upharpoonleft} \lambda}\forces \dot{X}_\lambda,\dot{Y}_\lambda \in \dot{U}^*_\lambda$. So ${r{\upharpoonleft} \lambda}\forces \dot{X}_\lambda \cap \dot{Y}_\lambda\in \dot{U}^*_\lambda$. Hence ${r{\upharpoonleft} \lambda}\forces ( \dot{s}_\lambda, \dot{X}_\lambda \cap \dot{Y}_\lambda) \in \dot{\QQ}_\lambda$.
	
	Hence, $r=r{\upharpoonleft} \lambda^\smallfrown (\dot{s},\dot{X}_\lambda \cap \dot{Y}_\lambda)\in \PP_\kappa$. It is now trivial to see that $r\leq^* p,q$.
	
	If $C\cap \kappa$ has a limit order type, then it is immediate that $r\in \PP_\kappa$ and $r\leq^* p,q$, just by the definition of $\PP_\kappa$ and $\leq^*$ and the inductive hypothesis.
\end{proof}

\begin{corollary}\label{Corollary Magidor chain condition}\cite[4.4]{magidor_IteratedPrikryForcing_IdentityCrisis}
	If $\kappa<\On$ then $\PP_\kappa$ satisfies the $\kappa^+$ chain condition. If we are working in $\MK$ and $\kappa=\On$ then every definable antichain is class sized: if $A \subset \PP_\kappa$ is a definable antichain, then there is a class $S$ such that $\dot{s}\in S$ if and only if $\dot{s}$ is the stem of some element of $A$. (Note that $\dot{s}$ cannot be a stem of two different elements of $A$ by the previous lemma, since $A$ is an antichain.)
\end{corollary}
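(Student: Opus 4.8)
The plan is to deduce both clauses from the preceding lemma --- which furnishes a common \emph{direct} extension of two conditions whose stems agree at every coordinate --- after first reducing to conditions whose stems are ground-model sequences. Fix $\kappa\in C\cup\{\sup C\}$. Every $p\in\PP_\kappa$ has a finite \emph{stem support} $F_p=\{\lambda\in C\cap\kappa:\ p\restriction\lambda\not\Vdash\dot s_\lambda=\emptyset\}$: this set is finite by the third clause in the definition of $\PP_\kappa$, since $p_\lambda\leq^*\mathbbold 1$ just says that $p_\lambda$ has empty stem. Call $p$ \emph{clean} if every stem name $\dot s_\lambda$ of $p$ is literally the check name $\check t_\lambda$ of a sequence $t_\lambda\in\lambda^{<\omega}$ (so $t_\lambda=\emptyset$ off $F_p$). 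The first thing I would prove is that the clean conditions are dense, by induction on the order type of $C\cap\kappa$: if $C\cap\kappa$ has a greatest element $\lambda$, use the inductive hypothesis to clean $p\restriction\lambda$, then extend the result to decide the $\PP_\lambda$-name $\dot s_\lambda$ to some $t_\lambda\in\lambda^{<\omega}$ (conditions deciding a fixed name are dense), re-apply the inductive hypothesis to re-clean below $\lambda$, and finally replace $\dot s_\lambda$ by $\check t_\lambda$ --- replacing a name by a provably equal one keeps us below $p$; if $C\cap\kappa$ is a limit, then $F_p$ is bounded below some $\mu\in C$ with $\mu<\kappa$, so it is enough to clean $p\restriction\mu$, which the inductive hypothesis supplies.

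With this in hand, suppose $\kappa<\On$ and $A\subseteq\PP_\kappa$ is an antichain. For each $p\in A$ pick a clean $p'\leq p$; the pair consisting of $F_{p'}$ and the tuple $(t_\lambda)_{\lambda\in F_{p'}}$ of ground-model stems ranges over a set of size at most $\kappa$, since $\lvert[C\cap\kappa]^{<\omega}\rvert\leq\kappa$ and each $\lambda\in C\cap\kappa$ satisfies $\lvert\lambda^{<\omega}\rvert<\kappa$. If $\lvert A\rvert$ were $\kappa^+$, regularity of $\kappa^+$ would give distinct $p,q\in A$ whose clean refinements $p',q'$ have the same support and the same ground-model stems; then $p'$ and $q'$ carry the identical stem name $\check t_\lambda$ at every coordinate, so the hypothesis of the preceding lemma holds and $p',q'$ have a common extension $r$. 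As $r\leq p'\leq p$ and $r\leq q'\leq q$, this contradicts $A$ being an antichain, so $\PP_\kappa$ is $\kappa^+$-c.c. For $\kappa=\On$ in $\MK$ the same analysis applies, except that a pigeonhole onto a repeated stem is no longer available. Instead, the preceding lemma shows that a condition is determined up to compatibility by its stem (a set, namely the finite function $\lambda\mapsto t_\lambda$ after cleaning), so distinct members of a definable antichain $A$ receive distinct stems; hence the stems of members of $A$ form a definable class $S$ in bijection with $A$ --- which is the stated ``class sized'' conclusion, the uniqueness clause being exactly this injectivity.

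The step I expect to be the real work is the density of clean conditions: deciding a single stem name can create new nontrivial stems at lower coordinates, so the cleaning procedure must be set up --- here as a transfinite induction on $\mathrm{ot}(C\cap\kappa)$ with a re-cleaning step, rather than a naive sweep through $F_p$ --- so that it is well-founded and terminates. Once that is in place, the rest is the pigeonhole count above together with an application of the preceding lemma.
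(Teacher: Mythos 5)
Your proof is correct, but it takes a genuinely different route from the paper's. The paper argues by induction on the order type of $C\cap\kappa$: the successor case is handled by writing $\PP_\kappa=\PP_\lambda\ast\dot{\QQ}_\lambda$ and quoting the $\lambda^+$-c.c.\ of Prikry forcing, while in the limit case it pigeonholes on the finite stem supports to bound them below some $\lambda<\kappa$, uses the inductively known $\lambda^+$-c.c.\ of $\PP_\lambda$ to find two members of the antichain with compatible restrictions, glues the tails onto a common extension, and only then invokes the preceding lemma; the class case is dispatched with ``similar''. You instead first prove a density lemma (conditions whose stems are literally checked ground-model sequences are dense --- your induction on $\mathrm{ot}(C\cap\kappa)$ works, and termination can even be seen directly, since each cleaning step only dirties coordinates strictly below the one being cleaned), and then make a single uniform pigeonhole on the stem data itself: since $\kappa$ is a limit cardinal there are at most $\kappa$ pairs of a finite support together with checked stems, so a $\kappa^+$-sized antichain yields two clean refinements with identical stem names at every coordinate, to which the lemma applies verbatim. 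Your route never needs the chain condition of proper initial segments (the induction only feeds the density lemma), its application of the lemma is literal and thereby sidesteps the off-support subtlety in the paper's gluing step (there the stems above $\lambda$ are only \emph{forced} empty by the two different restrictions), and the $\kappa=\On$ case falls out explicitly from injectivity of the stem map rather than by analogy; the paper's route buys brevity, needing no cleaning lemma at all. Two small patches for you: in the limit case of the cleaning induction you must also replace the forced-empty stem names at coordinates in $[\mu,\kappa)$ by $\check{\emptyset}$ to meet your literal definition of clean (the same harmless forced-equality replacement you use at successors), and your class $S$ consists of stems of the chosen clean refinements, whereas the corollary's phrasing refers to stems of members of $A$ itself --- but that version follows as well, since two members of $A$ carrying the identical stem names are compatible by the lemma.
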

\begin{proof}
	The $\kappa<\On$ case is proved by induction on the order type of $C\cap \kappa$. If the order type is a successor (say it has a largest element $\lambda$), then $\PP_\kappa=\PP_\lambda * \dot{\QQ}_\lambda$, and the result follows by inductive hypothesis and the fact that the Prikry forcing on $\lambda$ has the $\lambda^+$ chain condition. So we just need to consider the case where the order type is a limit.
	
	Suppose that this is the case, and that $A$ is an antichain of $\PP_\kappa$ of size $\kappa^+$. For each $p\in \PP_\kappa$, let $\supp(p)$ be the support of the stem of $p$, i.e. the finite set of $\lambda \in C\cap \kappa$ such that $p{\upharpoonleft} \lambda \neg\Vdash p_\lambda\leq^*_{\dot{\QQ}_\lambda} \mathbbold{1}_{\dot{\QQ}_\lambda}$. Note that this is defined in the ground model $V$. By the pigeonhole principle, we can find some finite subset $S\subset C\cap \kappa$, and some $A'\subset A$ of cardinality $\kappa^+$, such that for all $p\in A'$, $\supp(p)=S$. Since $C\cap \kappa$ has limit order type, we know that $S$ is bounded below $\kappa$, by some $\lambda$ say.
	
	Let $A''=\{{p {\upharpoonleft} \lambda}: p\in A'\}\subset \PP_\lambda$. We know that $\PP_\lambda$ has the $\lambda^+$ chain condition by assumption, so either $A''$ has cardinality less than $\lambda^+<\kappa^+$, or it contains two compatible elements. Either way, we can find $p,q\in A'$ such that ${p{\upharpoonleft} \lambda}$ and ${q{\upharpoonleft} \lambda}$ are compatible. Let $r \in \PP_\lambda$ be below both of them. Let $p',q'\in \PP_\kappa$ be obtained by sticking the parts of $p$ and $q$ (respectively) above $\lambda$ onto the end of $r$. Since $r\leq {p{\upharpoonleft} \lambda}$ and $r\leq {q{\upharpoonleft} \lambda}$, we know $p'$ and $q'$ are conditions, and that $p'\leq p$ and $q'\leq q$.
	
	But the parts of $p$ and $q$ which are above $\lambda$ have empty support. So the stems of $p'$ and $q'$ are both exactly the same as the stem of $p'{\upharpoonleft} \lambda = r = q' {\upharpoonleft} \lambda$. By the previous lemma, then, $p'$ and $q'$ are compatible. But then $p\geq p'$ and $q\geq q'$ are also compatible. Contradiction.
	
	The case $\kappa=\On$ is similar.
\end{proof}

In \cite{benNeria_MathiasForMagidor}, Ben Neria gives a generalisation of the Mathias criterion to test whether a given sequence is generic. Note that his result only applies to set-long iterations, while we are really interested in class-long sequences. We will see how to get around this issue when we come to prove Theorem \ref{theorem L[R]}; in the meantime we will restrict ourselves to set-long iterations.

\begin{theorem}\label{theorem Mathias Magidor criterion}\cite{benNeria_MathiasForMagidor}
	[$\ZFC$] Let $\PP$ be the Magidor iteration on a \textit{set} of measurables $C$, with corresponding normal measures $U_\kappa$ as above. Let $S=(S_\kappa)_{\kappa\in C} \in \prod_{\kappa \in C} \kappa^\omega$ be a sequence, not necessarily in the ground model $V$. Then the filter corresponding to $S$ is $\PP$ generic if and only if it satisfies the following two conditions:
	
	\begin{enumerate}
		\item The Mathias Criterion: For every $X\in \prod_{\kappa \in C} U_\kappa$ in $V$, the set $\bigsqcup_{\kappa \in C} S_\kappa \setminus X_\kappa$ is finite;
		\item The Separation Property: There are only finitely many tuples $\nu\leq \nu'<\kappa<\kappa'$ such that $\kappa,\kappa'\in C$ and $\nu \in S_\kappa$ and $\nu'\in S_{\kappa'}$.
	\end{enumerate}
\end{theorem}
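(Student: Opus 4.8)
The plan is to prove the two implications separately. The forward direction (genericity implies (1) and (2)) is the more routine one, and the converse is where the real work lies; throughout, ``generic'' should be read as ``meets every dense open subset of $\PP$'', and one uses that membership of a condition in the filter $G_S$ determined by $S$ is read off from $S$ together with the realised stems and side conditions.

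For the forward direction, suppose $G_S$ is $\PP$-generic. For the Mathias Criterion, fix $X=(X_\kappa)_{\kappa\in C}\in\prod_{\kappa\in C}U_\kappa$ lying in $V$. Since $\mathbbold{1}_{\PP_\lambda}\forces\check U_\lambda\subseteq\dot U_\lambda^{*}$, the tuple $(\langle\rangle,\check X_\lambda)_{\lambda\in C}$ is a condition, and intersecting it against an arbitrary $q$ via the intersection lemma above shows that the conditions $p=(\dot s_\lambda,\dot X_\lambda)_\lambda$ with $p\forces\dot X_\lambda\subseteq\check X_\lambda$ for all $\lambda$ are dense; as $G_S$ meets this set and every stem has finite support, $\bigsqcup_{\kappa\in C}S_\kappa\setminus X_\kappa$ is covered by finitely many finite stems, hence finite. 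For the Separation Property I would appeal to Proposition~\ref{proposition magidor forcing measure generating sets}: for each $\kappa'\in C$ the set $\Sigma_{\kappa'}^{G_S}$ belongs to $(\dot U_{\kappa'}^{*})^{G_S}$, so the Prikry-generic sequence $S_{\kappa'}$ is eventually inside it, which by the definition of $\Sigma_{\kappa'}^{G_S}$ says that all but finitely many $\nu'\in S_{\kappa'}$ satisfy $S_\lambda\cap(\nu'+1)=\emptyset$ for every $\lambda\in C\cap[\nu',\kappa')$. A short combinatorial argument, which feeds into the Mathias Criterion a family $(X_\kappa)$ chosen so that for cofinitely many $\kappa$ the sequence $S_\kappa$ lies above $\sup(C\cap\kappa)$, then upgrades this per-$\kappa'$ statement to the required global finiteness.

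For the converse, assume $S$ satisfies (1) and (2); I must show $G_S$ meets every dense open $D\subseteq\PP$ below each $p\in G_S$. The core is a Mathias--Prikry style \emph{uniform decision lemma}: by iterating the Prikry property along $C$ --- at each coordinate shrinking the side condition while fixing the stem, which is legitimate by the intersection lemma, and using the $\kappa^{+}$-chain condition of Corollary~\ref{Corollary Magidor chain condition} so that only boundedly many coordinates need be touched --- one produces a direct extension $p^{*}\leq^{*}p$ and an $N\in\omega$ such that any condition obtained from $p^{*}$ by appending to each stem at most $N$ further entries, all taken from the corresponding side condition of $p^{*}$, already lies in $D$. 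Granting this, one argues by recursion along $C$: each side condition of $p^{*}$ is, by Proposition~\ref{proposition magidor forcing measure generating sets}, generated from a ground-model set $X_\kappa\in U_\kappa$ together with $\Sigma_\kappa^{G_S}$; the Mathias Criterion guarantees that $S_\kappa$ eventually enters $X_\kappa$ (with only finitely many exceptional $\kappa$), and the Separation Property --- read in the reverse direction --- guarantees that $S_\kappa$ eventually enters $\Sigma_\kappa^{G_S}$, so $S_\kappa$ is eventually inside the whole side condition of $p^{*}$. Since the stems of $p^{*}$ are initial segments of $S$, a sufficiently long initial segment of $S$ then reads off a genuine condition $q\leq p^{*}$ with $q\in G_S$, and by the uniform decision lemma $q\in D$.

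I expect the main obstacle to be the uniform decision lemma itself. The Magidor iteration is genuinely iterated rather than a product --- the measure $\dot U_\kappa^{*}$ depends on the generic below $\kappa$ --- so successive applications of the Prikry property at different coordinates interact, and one must track supports carefully and use the chain condition to keep the fusion bounded; this is precisely the feature absent from the classical single-Prikry Mathias criterion. It is also worth stressing the structural reason both hypotheses occur: a side condition of the iteration ``sees'' the generic below it via Proposition~\ref{proposition magidor forcing measure generating sets}, splitting into a ground-model part handled by the Mathias Criterion and a $\Sigma$-part handled by the Separation Property, so neither condition alone would suffice to recognise when the realised stems assemble into an element of $\PP$.
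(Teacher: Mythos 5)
First, a point of comparison: the paper does not prove this statement at all --- it is quoted from Ben Neria's paper \cite{benNeria_MathiasForMagidor} and used as a black box --- so there is no in-paper argument to measure your sketch against; it has to stand on its own as a reconstruction of Ben Neria's theorem. As such it is an outline with the hard content deferred, and the deferred part contains genuine problems. Your ``uniform decision lemma'' is the entire substance of the difficult direction, and as stated it is false: if any condition obtained from $p^{*}$ by appending \emph{at most} $N$ entries lies in $D$, then in particular $p^{*}$ itself (appending zero entries) lies in $D$, which cannot be arranged for, say, the dense open set of conditions whose stem at a fixed $\kappa\in C$ is nonempty. The correct capturing statement must prescribe \emph{how many} points are appended and at \emph{which} coordinates, and in the iterated (as opposed to product) setting there is a further obstruction you do not address: the side condition at a coordinate $\lambda$ is a $\PP_\lambda$-name, so after appending entries to stems below $\lambda$ its interpretation changes, and ``entries taken from the corresponding side condition of $p^{*}$'' is not well-defined without simultaneously tracking how the names are re-evaluated. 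The $\kappa^{+}$-chain condition does not by itself bound the fusion; what limits the coordinates touched by a single extension is the finite support of stems, and the real work is a fusion/rank argument across a limit-order-type $C$, which is exactly what Ben Neria's proof supplies and your sketch does not.

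There is also a gap in the forward direction for the Separation Property. From Proposition \ref{proposition magidor forcing measure generating sets} you correctly get, for each $\kappa'\in C$ separately, that all but finitely many $\nu'\in S_{\kappa'}$ lie in $\Sigma_{\kappa'}^{G_S}$; but summing ``finitely many exceptions per $\kappa'$'' over infinitely many $\kappa'$ does not give the required global finiteness, and the repair you gesture at --- choosing $X$ so that for cofinitely many $\kappa$ the sequence $S_\kappa$ lies above $\sup(C\cap\kappa)$ --- is impossible whenever $\kappa$ is a limit point of $C$, since then $\sup(C\cap\kappa)=\kappa$ while $S_\kappa\subseteq\kappa$. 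One needs a genuine density argument in $\PP$ (or an appeal to the structure of the iteration) showing that some single condition in $G_S$ already bounds all interleavings outside its finite stem support; as written, this step would fail. The overall architecture --- forward direction by density plus $\Sigma_\kappa$, converse by recursion along $C$ using the Mathias Criterion for the ground-model part of the side conditions and Separation for the $\Sigma_\kappa$-part --- is the right shape, but the two items above are precisely where the theorem's content lives, so the proposal cannot be counted as a proof.
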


We will use this to show that the sequence we want to add to create $L[\Reg_{<\alpha}]$ is actually generic.

More precisely, we shall use this to prove that a new criterion is sufficient for a sequence to be generic; and will then verify that the sequence we're interested in satisfies this criterion.

\begin{lemma}\label{lemma critical points Magidor generic}
	Let $M=M_0$ be a mouse (below O-Sword, to simplify notation) whose extender sequence contains a bounded set of measurables $C$. Let $\mathcal{I}=\langle M_i, \pi_{i,j}\rangle_{i\leq j\leq \theta}$ be a (set or class) length iteration of $M$. For $\lambda \in \pi_{0,\theta}(C)$, let $S_\lambda\in \lambda^\omega$ be an increasing and cofinal sequence (necessarily outside $M_\gamma$) of critical points of $\mathcal{I}$, such that if $\kappa_i\in S_\lambda$ then $\pi_{i,\theta}(\kappa_i)=\lambda$.
	
	Then $S=(S_\lambda)_{\lambda \in \pi_{0,\theta}(C)}$ is a generic sequence for the Magidor iteration of $\pi_{0\,\theta}(C)$.
\end{lemma}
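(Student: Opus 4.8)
The plan is to reduce to the set-length case and apply Ben Neria's Mathias–Magidor criterion (Theorem \ref{theorem Mathias Magidor criterion}). First I would fix notation: write $C'=\pi_{0,\theta}(C)$, and for each $\lambda\in C'$ let $U_\lambda^*$ denote the image under $\pi_{0,\theta}$ of the relevant measure of $M_0$ on the preimage of $\lambda$ (equivalently, the top measure of the mouse measuring $\lambda$ in $M_\theta$; note $C$ is bounded in $M_0$, so each measurable in $C$ carries a measure on the $M_0$-sequence). Since $C$ is bounded in $M_0$, the iteration $\mathcal{I}$ restricted to the critical points landing below $\sup C'$ is really a set-length iteration — all the critical points used to build the $S_\lambda$, $\lambda\in C'$, lie below $\sup C'$ — so I may run the argument inside $M_\theta$ (or an initial segment thereof) where Theorem \ref{theorem Mathias Magidor criterion} applies with ground model $M_\theta$ and measure class $(U^*_\lambda)_{\lambda\in C'}$. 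It therefore suffices to verify the Mathias Criterion and the Separation Property for $S=(S_\lambda)_{\lambda\in C'}$ over $M_\theta$.

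For the Mathias Criterion, fix $X=(X_\lambda)_{\lambda\in C'}\in\prod_{\lambda\in C'}U^*_\lambda$ in $M_\theta$. I would pull $X$ back along the iteration: since the iteration is continuous and each $X_\lambda$ is in the ultrafilter $U^*_\lambda=\pi_{0,\theta}(U)$ for the appropriate $U$ on the $M_0$-side, there is a stage $i_0$ and a condition $X^{(i_0)}\in M_{i_0}$ with $\pi_{i_0,\theta}(X^{(i_0)})=X$ (using that $C$ is a set, so $X$ has a bounded support and is hit at some bounded stage). The key fact is then: whenever $\kappa_i$ is the critical point of the $i$-th ultrapower map with $i\geq i_0$ and $\pi_{i,\theta}(\kappa_i)=\lambda$, then $\kappa_i\in X_\lambda$. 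This is exactly the standard "critical points are generic" lemma for linear iterations of measures: $\kappa_i\in\pi_{i_0,i}(X_{\lambda}^{(i_0)})$ because $\kappa_i$ sits in every set in the measure $\pi_{i_0,i}(U)$ applied at stage $i$, and $\pi_{i,\theta}$ then carries this into $X_\lambda$. Hence $S_\lambda\setminus X_\lambda$ contains only critical points used before stage $i_0$, and since only finitely many $\lambda\in C'$ can be affected (the images of the $<i_0$ many early critical points under $\pi_{0,\theta}$ form a finite — indeed bounded-below-$i_0$-indexed — set), $\bigsqcup_{\lambda\in C'}S_\lambda\setminus X_\lambda$ is finite.

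For the Separation Property I would argue directly from the linearity of the iteration. Suppose $\nu\le\nu'<\lambda<\lambda'$ with $\lambda,\lambda'\in C'$, $\nu\in S_\lambda$, $\nu'\in S_{\lambda'}$. Then $\nu=\kappa_i$ for some stage $i$ with $\pi_{i,\theta}(\kappa_i)=\lambda$, and $\nu'=\kappa_{i'}$ with $\pi_{i',\theta}(\kappa_{i'})=\lambda'$. Since the critical points of a linear iteration are strictly increasing and $\pi_{i,\theta}(\kappa_i)=\lambda<\lambda'=\pi_{i',\theta}(\kappa_{i'})$ while also $\kappa_i=\nu\le\nu'=\kappa_{i'}$, one sees $i\le i'$, and in fact $\kappa_i<\lambda\le\kappa_{i'}$ forces $i<i'$ with $\kappa_i$ below the critical point $\kappa_{i'}$; moreover $\lambda=\pi_{i,\theta}(\kappa_i)$ is fixed by $\pi_{i',\theta}$ precisely when $\lambda<\kappa_{i'}$, which holds here. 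The upshot is that each such tuple is determined by the pair of stages $(i,i')$ with $i$ among the finitely many stages whose critical point's image is $<\kappa_{i'}$ — but there are at most finitely many such configurations because the $S_\lambda$ are the critical points and each critical point is used once, and the "separating" requirement pins $\nu'$ to be below $\lambda<\lambda'$, leaving only finitely many early stages $i$ available. I would spell this out as: the set of bad tuples injects into the set of pairs $(i,i')$ of iteration stages with $\kappa_i<\pi_{i',\theta}(\kappa_{i'})$ and $\pi_{i,\theta}(\kappa_i)<\pi_{i',\theta}(\kappa_{i'})$ and both critical points lying in the respective $S$-sequences, and only finitely many such pairs exist because almost all critical points $\kappa_i$ land on $\lambda=\pi_{i,\theta}(\kappa_i)$ with $\lambda>\kappa_{i'}$ for any fixed later $i'$… — in other words, it reduces to the observation that for fixed $\lambda'$ only the finitely many stages before the first one mapping onto $\lambda'$ can contribute.

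The main obstacle I expect is the Separation Property: getting the counting exactly right, i.e. checking that the interaction between the linear-iteration indexing and the Magidor indexing (which wants the $S_\lambda$ indexed by the final images $\lambda\in C'$, not by iteration stages) really does leave only finitely many bad tuples. The Mathias Criterion is the familiar "critical points are generic" computation and should go through smoothly once the pullback of $X$ to a bounded stage is set up; the reduction to set-length is immediate from the boundedness of $C$ in $M_0$, which is the hypothesis that makes Theorem \ref{theorem Mathias Magidor criterion} applicable at all.
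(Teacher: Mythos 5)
There is a genuine gap, and it sits exactly where the real work of the lemma lies: the Mathias criterion for the ``early'' critical points. Granting your pullback for the moment, what you actually get is $S_\lambda\setminus X_\lambda\subseteq S_\lambda\cap\kappa_{i_0}$ for every $\lambda$, and your finiteness conclusion rests on the parenthetical claim that the critical points from stages $<i_0$ form a finite set. They do not: $i_0$ is in general an infinite ordinal, so there may be infinitely many such critical points, and for the (possibly infinitely many) $\lambda\in C'$ lying \emph{below} $\kappa_{i_0}$ your argument gives no control whatsoever over $S_\lambda\setminus X_\lambda$ --- a priori it could be all of $S_\lambda$. ``Bounded-below-$i_0$-indexed'' is not ``finite'', and this is precisely the content of the lemma rather than a bookkeeping remark. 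The paper closes this gap with a minimal-counterexample argument: assume some $X$ makes the union infinite, choose one minimizing $\tau_X=\sup\{\lambda:X_\lambda\neq\lambda\}$, write $X{\upharpoonleft}\tau_X=\pi_{0,\theta}(f)(\kappa_{i_0},\ldots,\kappa_{i_n}){\upharpoonleft}\tau_X$ with the last parameter index $i=i_n$ minimal (this representation, not range-membership, is what is always available), prove $\kappa_i<\tau_X$, show every critical point from a stage $>i$ with image in $C^*$ lands in the corresponding $X_\lambda$, use the separation argument plus well-foundedness (an infinite descending chain of ordinals would otherwise appear) to see that the badness above $\kappa_i$ is finite, and then replace $X$ by $\tilde X$ supported below $\kappa_i$, contradicting minimality of $\tau_X$. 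Nothing in your sketch substitutes for this step.

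Two smaller points. First, the pullback ``$X=\pi_{i_0,\theta}(X^{(i_0)})$ for some bounded stage'' is not justified for an arbitrary (e.g.\ successor-length) iteration; in general one only has $X$ representable from finitely many critical-point parameters, or (when a critical point lies above a code for $X{\upharpoonleft}\tau_X$) membership $X{\upharpoonleft}\tau_X\in M_j$, which is weaker than being in the range of $\pi_{j,\theta}$. Relatedly, ``$\kappa_i$ sits in every set in the measure $\pi_{i_0,i}(U)$'' is false as stated ($\kappa\notin j(\kappa)\setminus\{\kappa\}\in j(U)$); what is true, and what the paper uses, is that $\kappa_j\in\pi_{j,j+1}(Z)$ for $Z\in U_{\kappa_j}\cap M_j$, so one must first factor the relevant measure-one set through $\pi_{j,j+1}$, which is exactly why the parameters must lie below $\kappa_j$. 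Second, for the separation property your counting is unnecessary and trails off: the paper shows there are \emph{no} interleaved tuples at all, since with $i<j$ one has $\lambda=\pi_{i,\theta}(\kappa_i)=\pi_{j,\theta}(\pi_{i,j}(\kappa_i))=\pi_{i,j}(\kappa_i)<\kappa_j=\nu'$ (as $\pi_{j,\theta}$ is the identity below $\kappa_j$), directly contradicting $\nu'<\lambda$; you had all the ingredients but stopped short of this one-line contradiction.
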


\begin{proof}
	We shall show that $S$ satisfies the criteria in \ref{theorem Mathias Magidor criterion}.
	
	First, the Separation Property. Suppose that $\nu\in S_\lambda$ and $\nu'\in S_\lambda'$, and $\nu\leq \nu'<\lambda<\lambda'$. We know, by definition of $S_\lambda$, that there exists $i\in \theta$ such that $\nu=\kappa_i$ and $\pi_{i,\theta}(\kappa_i)=\lambda$. Likewise, there exists $j\in \theta$ such that $\nu'=\kappa_{j}$ and $\pi_{j,\theta}(\kappa_{j})=\lambda'$.  We know that $\kappa_i\leq \kappa_j$, so $i\leq j$. Since $\pi_{i,\theta}(\kappa_i)=\lambda< \lambda'=\pi_{j,\theta}(\kappa_j)$ we know $i\neq j$, so $i<j$. By elementarity of $\pi_{j,\theta}$,
	$$\pi_{i,j}(\kappa_i)<\pi_{j,j}(\kappa_j)=\kappa_j$$
	But $\pi_{j,\theta}{\upharpoonleft} \kappa_j = \id$, so then $\pi_{i,j}(\kappa_i)=\pi_{i,\theta}(\kappa_i)=\lambda$. This implies that $\lambda<\kappa_j$, contradicting our assumption that $\nu\leq \nu'<\lambda<\lambda'$. So there are no such interleaved pairs at all, and $S$ very much satisfies the Separation Property.
	
	We now turn to the more difficult part of this proof, showing that the Mathias Criterion is satisfied.
	
	To simplify notation, let $C^*=\pi_{0,\theta}(C)$, let $U=\{(\lambda,U_\lambda): \lambda\in C\}$ be the sequence of ultrapowers on $C$ in $M_0$, and let $U^*=\pi_{0,\theta}(U)$ be the corresponding sequence of ultrapowers on $C^*$. For $\lambda\in C^*$, let $U_\lambda^*=U^*(\lambda)$. Fix $X\in \prod_{\lambda\in C^*}U_\lambda^*$ in $M_\theta$, and for $\lambda\in C^*$ let $X_\lambda=X(\lambda)$. Let 
	
	$$\tau_X = \sup\{\lambda\in C^*: X_\lambda \neq \lambda\}\leq \sup C^*$$
	
	We want to show that $\bigsqcup S_\lambda \setminus X_\lambda$ is finite.	Suppose we can find a counterexample: an $X$ for which this union is infinite. Then let $X$ be a counterexample which minimises $\tau_X$.\footnote{Note that we haven't yet eliminated the possibility that all counterexamples $X$ satisfy $\tau_X=\sup C^*$.} Let $\bar{X}=X{\upharpoonleft} \tau_X$.
	
	Recall that by \cite[4.2.4]{zemanInnerModelsAndLargeCardinals} we can write 
	$$\bar{X}=\pi_{0,\theta}(f)(\kappa_{i_0},\ldots,\kappa_{i_n}){\upharpoonleft} \tau_X$$
	
	for some function $f\in M_0$, some $n\in \omega$ and some $i_0<\ldots<i_n<\theta$, where $\kappa_i$ denotes the $i$'th critical point of the iteration $\mathcal{I}$ as usual. (In fact, we know it's possible to do this even without including the ${\upharpoonleft}\tau_X$, but we don't need to.) Fix a way of writing this (including the ${\upharpoonleft} \tau_X$) which minimises $i_n$ for our chosen $\bar{X}$, and let $i=i_n$. (Note that we do \textit{not} require that $\pi_{0,\theta}(f)(\kappa_{i,0},\ldots,\kappa_{i_n})$ be equal to either $X$ or $\bar{X}$, only that it agree with $\bar{X}=X{\upharpoonleft} \tau_X$ up to $\tau_X$.)
	
	This is an opportune point to prove a short technical result we're going to need later.
	
	\begin{claim}
		$\kappa_i<\tau_X$
	\end{claim}
	
	\begin{proof}
		This is trivial if $\tau_X>\kappa_j$ for all $j<\theta$. Suppose otherwise, and let $j<\theta$ be least such that $\kappa_j\geq \tau_X$. Now $\bar{X}\in M_\theta$ can be coded easily as a subset of $\tau_X$ and hence as a subset of $\kappa_j$. By \cite[4.2.1]{zemanInnerModelsAndLargeCardinals}, we know that this coding already exists in $M_j$, and hence that $\bar{X}\in M_j$. Since $\pi_{j,\theta}$ acts as the identity on ordinals below $\kappa$, it is also easy to see that $\pi_{j,\theta}(\bar{X}){\upharpoonleft} \tau_X=\bar{X}$.
		
		We can write $\bar{X}=\pi_{0,j}(g)(\kappa_{j_0},\ldots,\kappa_{j_m})$ for some $g\in M_0$, some $m\in \omega$ and some $j_0<\ldots<j_m<j$. But then
		
		$$\bar{X}=\pi_{0,\theta}(g)(\kappa_{j_0},\ldots,\kappa_{j_m}){\upharpoonleft} \tau_X$$
		
		Hence $i\leq j_m<\tau_X$.
	\end{proof}
	
	Returning from this diversion, we now prove the central claim of this lemma.
	
	\begin{claim}\label{claim critical points in measure 1 set}
		Let $i<j<\theta$, and suppose that $\pi_{j,\theta}(\kappa_j)=\lambda\in C^*$. Then $\kappa_j\in X_\lambda$.
	\end{claim}
	
	\begin{proof}
		If $\lambda \geq \tau_X$, then $X_\lambda=\lambda$ and so the claim is trivial. So suppose $\lambda \in C^* \cap \tau_X$.
		
		Since $j+1>i$, we know $Y:=\pi_{0,j+1}(f)(\kappa_{i_0},\ldots,\kappa_{i_n})$ is well defined. Moreover, $\pi_{j+1,\theta}(Y)\upharpoonleft\tau_X=\bar{X}$, so if $\kappa \in \pi_{0,j+1}(C)$ and $\pi_{j+1,\theta}(\kappa)<\tau_X$ then $Y(\kappa)$ is well-defined, and is $M_{j+1}$ believes it is a measure $1$ subset of $\kappa$; and $\pi_{j+1,\theta}(Y(\kappa))=X_{\pi{j+1,\theta}(\kappa)}$. Now $\pi_{j,j+1}(\kappa_j)$ is such a $\kappa$, since $\pi_{j+1,\theta}(\pi_{j,j+1}(\kappa_j))=\lambda<\tau_X$
		
		$M_{j+1}$ believes that $Y(\pi_{j,j+1}(\kappa_j)) \subset \pi_{j,j+1}(\kappa_j)$ is measure 1. But (since $\kappa_j$ is the critical point the iteration $\mathcal{I}$ at stage $j$) we know that we generated $M_{j+1}$ by taking the ultrapower of $\kappa_j$ in $M_j$, and $\pi_{j,j+1}$ is the corresponding ultrapower map. So $Y(\pi_{j,j+1}(\kappa_j))$ being measure $1$ in $M_{j+1}$ implies $\kappa_j\in Y(\pi_{j,j+1}(\kappa_j))$.
		
		The critical point of $\pi_{j+1,\theta}$ is $\kappa_{j+1}>\kappa_j$. So
		
		$$\kappa_j=\pi_{j+1,\theta}(\kappa_j)\in \pi_{j+1,\theta}(Y(\pi_{j,j+1}(\kappa_j)))=X_\lambda$$
		
		So $\kappa_j\in X_\lambda$.
	\end{proof}
		
	Let $\lambda \in C^*\cap \tau_X$. It follows immediately from the second claim that $S_\lambda \setminus X_\lambda \subset \kappa_i$. Of course, this tells us nothing if $\lambda\leq \kappa_i$, but if $\lambda > \kappa_i$ then since $S_\lambda$ is cofinal in $\lambda$, we know that $S_\lambda \setminus X_\lambda$ is finite. Moreover, if the least $j<\theta$ such that $\kappa_j\in S_\lambda$ is greater than $i$, then $S_\lambda \setminus X_\lambda = \emptyset$.
	
	We shall show that this second, stronger statement holds for all but finitely many $\lambda \in C^* \setminus \kappa_i$. That is, there are only finitely many $\lambda>\kappa_i$ in $C^*$ such that $\kappa_j\in S_\lambda\setminus X_\lambda$ for some $j<i$. Suppose there were infinitely many such elements of $C^*$. Call them $\kappa_i<\lambda_0<\lambda_1<\ldots$ say, and call the corresponding indices $j_0,j_1,\ldots$. By the second claim, we know that for all $m$, $\kappa_{j_m}<\kappa_i$, and by assumption $\lambda_m>\kappa_i$. So if $m<n$ and $\kappa_{j_m}<\kappa_{j_n}$, we have $\kappa_{j_m}<\kappa_{j_n}<\lambda_m<\lambda_n$ -- and while proving the separation property, we showed that this state of affairs is impossible. Hence $\kappa_{j_m}>\kappa_{j_n}$. But now we have an infinite descending chain of ordinals. So our assumption about the existence of the $\lambda_n$'s and $j_n$'s is contradictory -- there are only finitely many $\lambda>\kappa_i$ such that we can find $j<i$ with $\kappa_j\in S_\lambda$. And as we saw before, for any single such $\lambda$, $S_\lambda \setminus X_\lambda$ is finite.
	
	Hence, the set $\bigsqcup_{\lambda \in C^* \setminus \kappa_i} S_\lambda \setminus X_\lambda$ is finite (note that $\kappa_i$ itself is not in $C^*$). We are assuming that $\bigsqcup_{C^*} S_\lambda \setminus X_\lambda$ is infinite, so $\bigsqcup_{\lambda \in C^* \cap \kappa_i} S_\lambda \setminus X_\lambda$ must be infinite.
	
	Let us define a new sequence $\tilde{X} \in \prod_{C^*}U^*_\lambda$:
	
	\begin{align*}
		\tilde{X}(\lambda) = 
		\begin{cases}
			X(\lambda) & \text{if } \lambda \in C^*\cap \kappa_i\\
			\lambda & \text{if } \lambda \in C^* \setminus \kappa_i
		\end{cases}
	\end{align*}
	
	Let $\tilde{X}_\lambda=\tilde{X}(\lambda)$. We have just seen that $\bigsqcup_{\lambda \in C^*} S_\lambda \setminus \tilde{X}_\lambda = \bigsqcup_{\lambda \in C^* \cap \kappa_i} S_\lambda \setminus X_\lambda$ is infinite. Also, $\tau_{\tilde{X}} := \sup \{\lambda: \tilde{X}_\lambda \neq \lambda\} \leq \kappa_i$.
	
	We saw earlier that $\kappa_i < \tau_X$, so $\tau_{\tilde{X}}<\tau_X$. But then the existence of $\tilde{X}$ contradicts minimality of $\tau_X$. So we have a contradiction, and there is no $X$ such that $\bigsqcup_{C^*} S_\lambda\setminus X_\lambda$ is infinite.
	
	So the Mathias Criterion and Separation Property both hold, and hence $S$ is generic by Theorem \ref{theorem Mathias Magidor criterion}.
\end{proof}

\section{$V$ cardinals}

Before starting the argument proper, we will briefly review a couple of useful standard lemmas about fixed points of iterations.

\begin{lemma}\label{Lemma V cardinals are limits of measure sequences}\cite[6.1.5]{zemanInnerModelsAndLargeCardinals}
	Let $\lambda$ be a (weak) inaccessible of $V$, or $\lambda = \On$. Let $M_0$ be a mouse, such that $\lambda>\lvert M_0\rvert$. Let $\mathcal{I}=\langle M_i,\pi_{i,j}\rangle$ be a simple iteration of length $\lambda$ of $M_0$ with no nontrivial critical points, whose set/class of critical points is cofinal below $\lambda$, but does not go above $\lambda$. Then there exists a cofinal increasing sequence $(i_\epsilon)_{\epsilon<\lambda}$ such that for all $\epsilon<\beta<\lambda$,
	
	$$\pi_{i_\epsilon,i_\beta}(\kappa_{i_\epsilon})=\kappa_{i_\beta}$$
\end{lemma}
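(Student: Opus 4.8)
The plan is to build the sequence $(i_\epsilon)_{\epsilon<\lambda}$ by recursion, exploiting the fact that for each critical point $\kappa_i$ of the iteration, the image $\pi_{i,\lambda}(\kappa_i)$ is some ordinal which, by the usual properties of simple iterations of a mouse below $O^{\text{Sword}}$, lies in the (bounded) image $\pi_{0,\lambda}(C)$ of the top measure sequence — in particular $\pi_{i,\lambda}(\kappa_i)$ is one of only $\lvert M_i\rvert$-many candidates. The key point is a \emph{continuity/closure} argument: since $\lambda$ is (weakly) inaccessible (or $\On$), it is regular and a limit of fixed points of the natural side conditions. First I would recall from \cite[4.2.4]{zemanInnerModelsAndLargeCardinals} (as already used in Lemma \ref{lemma critical points Magidor generic}) that every element of $M_i$ is of the form $\pi_{0,i}(f)(\kappa_{l_0},\ldots,\kappa_{l_m})$ for $f\in M_0$ and finitely many earlier critical points; hence $\lvert M_i\rvert\leq \max(\aleph_0,\lvert i\rvert,\lvert M_0\rvert)$, and in particular $\lvert M_i\rvert<\lambda$ for all $i<\lambda$ since $\lambda>\lvert M_0\rvert$ is regular.

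The recursion runs as follows. Having chosen $i_\epsilon$, I want the next critical point $\kappa_{i_{\epsilon+1}}$ to be \emph{above} everything in $M_{i_\epsilon}$ that could ever be moved, i.e. above $\pi_{0,i_\epsilon}(\text{something})$ for the relevant parameters; concretely, since $\lvert M_{i_\epsilon}\rvert<\lambda$ and the critical points of $\mathcal{I}$ are cofinal in $\lambda$, I can pick $i_{\epsilon+1}>i_\epsilon$ so large that $\kappa_{i_{\epsilon+1}}$ exceeds the supremum of $M_{i_\epsilon}\cap \lambda$ (more precisely, exceeds $\sup(\pi_{0,i_\epsilon}(C)\cap\lambda)$ and any other bounded-below-$\lambda$ data attached to $M_{i_\epsilon}$). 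At limit stages $\beta<\lambda$ I set $i_\beta=\sup_{\epsilon<\beta}i_\epsilon$, which is $<\lambda$ by regularity, and $\kappa_{i_\beta}=\sup_{\epsilon<\beta}\kappa_{i_\epsilon}$ by continuity of the iteration. The claim $\pi_{i_\epsilon,i_\beta}(\kappa_{i_\epsilon})=\kappa_{i_\beta}$ then follows because: at each successor step, $\pi_{i_\epsilon,i_{\epsilon+1}}$ has critical point $\kappa_{i_\epsilon}$, and $\pi_{i_\epsilon,i_{\epsilon+1}}(\kappa_{i_\epsilon})$ is the least measurable of $M_{i_{\epsilon+1}}$ above $\kappa_{i_\epsilon}$ in the relevant sense, which by our choice of $i_{\epsilon+1}$ is exactly $\kappa_{i_{\epsilon+1}}$ — any measurable of $M_{i_\epsilon}$ strictly between $\kappa_{i_\epsilon}$ and its image would have to lie in the bounded set we arranged to be below $\kappa_{i_{\epsilon+1}}$, and anything produced by the ultrapower sits above, so the image is pinned down; one then composes and takes continuity at limits.

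The main obstacle, I expect, is the bookkeeping at successor steps: making precise exactly which bounded-below-$\lambda$ fragment of $M_{i_\epsilon}$ one must overshoot so that $\pi_{i_\epsilon,i_{\epsilon+1}}(\kappa_{i_\epsilon})$ lands precisely on $\kappa_{i_{\epsilon+1}}$ rather than on some strictly larger measurable of $M_{i_{\epsilon+1}}$. This is where one uses that $M_0$ is below $O^{\text{Sword}}$ and that the iteration is simple with no nontrivial (i.e. no truncating) critical points, so that $\pi_{i_\epsilon,i_{\epsilon+1}}$ is a genuine ultrapower embedding and $\pi_{i_\epsilon,i_{\epsilon+1}}(\kappa_{i_\epsilon})$ is the supremum of $j\,{}''\kappa_{i_\epsilon}$ together with the finitely many generators, all of which are controlled. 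The limit-stage and $\lambda=\On$ cases are then routine: regularity of $\lambda$ keeps $i_\beta<\lambda$, continuity of the direct limit gives $\kappa_{i_\beta}=\sup\kappa_{i_\epsilon}$, and cofinality of the critical points in $\lambda$ guarantees the recursion does not stall. I would cite \cite[6.1.5]{zemanInnerModelsAndLargeCardinals} as the source and present the proof as a streamlined version of that argument adapted to our notation.
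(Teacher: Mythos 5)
The paper offers no proof of this lemma at all --- it is quoted from \cite[6.1.5]{zemanInnerModelsAndLargeCardinals} --- so the only question is whether your argument stands on its own, and unfortunately it has a genuine gap at the successor step. The equality $\pi_{i_\epsilon,i_{\epsilon+1}}(\kappa_{i_\epsilon})=\kappa_{i_{\epsilon+1}}$ is a property of the \emph{given} iteration $\mathcal{I}$, not something you can arrange by choosing $i_{\epsilon+1}$ far out. The ordinal $\pi_{i_\epsilon,j}(\kappa_{i_\epsilon})$ is the current image of the measurable $\kappa_{i_\epsilon}$, and since a one-step ultrapower fixes every measurable of the model other than its own critical point (those above the critical point are fixed by inaccessibility in the model, those below trivially), this image changes only at stages where $\mathcal{I}$ actually uses it as the critical point. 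If $\mathcal{I}$ never again hits this ``thread'' after stage $i_\epsilon$, then $\pi_{i_\epsilon,j}(\kappa_{i_\epsilon})$ is eventually constant and strictly below any sufficiently large later critical point; so choosing $\kappa_{i_{\epsilon+1}}$ above $\sup(\On^{M_{i_\epsilon}}\cap\lambda)$ makes the desired equality \emph{fail}, not hold. The supporting claim that $\pi_{i_\epsilon,i_{\epsilon+1}}(\kappa_{i_\epsilon})$ is ``the least measurable of $M_{i_{\epsilon+1}}$ above $\kappa_{i_\epsilon}$'' is also false: already after one ultrapower there are new measurables strictly between $\kappa_{i_\epsilon}$ and $\pi_{i_\epsilon,i_\epsilon+1}(\kappa_{i_\epsilon})$ whenever the model has measurables unbounded below $\kappa_{i_\epsilon}$, which is exactly the situation for the machete mice. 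The limit step has the same defect: the critical point at stage $\sup_\epsilon i_\epsilon$ is whichever measurable of the direct limit the given iteration happens to hit; ``continuity of the iteration'' does not make it $\sup_\epsilon\kappa_{i_\epsilon}$.

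The actual content of the lemma is that some \emph{single} measurable has its successive images used as critical points cofinally often below $\lambda$, and the $i_\epsilon$ must be chosen among precisely those stages; finding such a thread is the whole work, and your recursion never does it. The correct route is a pigeonhole/pressing-down style argument exploiting the representation $\kappa_j=\pi_{0,j}(f)(\kappa_{l_1},\ldots,\kappa_{l_n})$ with $f\in M_0$, $\lvert M_0\rvert<\lambda$ and $\lambda$ regular (or an analysis of the least ordinal of the direct limit above all the $\kappa_j$); your cardinality estimate $\lvert M_i\rvert<\lambda$ is only the first ingredient of that argument. As written, the proposal does not prove the statement; if you do not want to reproduce Zeman's argument, the honest move is the paper's own, namely to cite \cite[6.1.5]{zemanInnerModelsAndLargeCardinals}.
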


Using a standard argument, we can use this to prove that if we iterate a measure cofinally below any cardinal $\lambda$ of $V$, then in the resulting model $\lambda$ will be measurable.

\begin{lemma}\label{Lemma V cardinals end up with a measurable on them}
	Let $\lambda$, $M_0$ and $\mathcal{I}$ be as in the previous lemma. If $\lambda\neq \On$, let $M_\lambda$ be the new mouse in the continuation of $\mathcal{I}$ (i.e.  the transitive collapse of the direct limit of $\langle M_i,\pi_{i,j}\rangle_{i\leq j<\lambda}$). Let $\pi_{i,\lambda}:M_i\rightarrow M_\lambda$ be the corresponding maps. Then $\lambda$ is measurable in $M_\lambda$; specifically,
	
	$$\pi_{i_0,\lambda}(\kappa_{i_0})=\lambda$$
	
	Similarly, if $\lambda=\On$ (and so we don't have a well defined transitive collapse) then let $\bar{M}_\lambda$ be the structure which is the direct limit of the system $\langle M_i,\pi_{i,j}\rangle$ and let $\bar{\pi}_{i,\lambda} :M_i\rightarrow \bar{M}_\lambda$ be the corresponding maps. Then $\bar{\pi}_{i_0,\lambda}(\kappa_{i_0})$ is an ordinal in the sense of $\bar{M}_\lambda$, and has order type $\On$.
\end{lemma}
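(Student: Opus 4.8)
The plan is to feed the coherent sequence $(i_\epsilon)_{\epsilon<\lambda}$ produced by Lemma \ref{Lemma V cardinals are limits of measure sequences} into a short counting argument on the ordinals of the direct limit. First I would observe that, by commutativity of the iteration maps, $\pi_{i_\epsilon,\lambda}=\pi_{i_\beta,\lambda}\circ\pi_{i_\epsilon,i_\beta}$ for $\epsilon<\beta<\lambda$, so $\pi_{i_\epsilon,\lambda}(\kappa_{i_\epsilon})=\pi_{i_\beta,\lambda}(\pi_{i_\epsilon,i_\beta}(\kappa_{i_\epsilon}))=\pi_{i_\beta,\lambda}(\kappa_{i_\beta})$; hence the ordinal $\mu:=\pi_{i_0,\lambda}(\kappa_{i_0})$ is the common value of $\pi_{i_\epsilon,\lambda}(\kappa_{i_\epsilon})$ over all $\epsilon<\lambda$. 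Since $\kappa_{i_\epsilon}$ is measurable in $M_{i_\epsilon}$ (it is the critical point used at stage $i_\epsilon$) and the iteration maps preserve ``being measurable'', it will follow that $\mu$ is measurable in $M_\lambda$; so in the case $\lambda\neq\On$ everything reduces to showing $\mu=\lambda$, and the $\On$ case will be the same computation read off for order types.

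For $\lambda \neq \On$ I would then argue the two inequalities. For $\mu\ge\lambda$: the critical points of a simple (nontrivial) iteration are \emph{strictly} increasing, so $\crit(\pi_{j,\lambda})=\kappa_j$ for every $j<\lambda$, whence $\mu=\pi_{i_\epsilon,\lambda}(\kappa_{i_\epsilon})>\kappa_{i_\epsilon}$ for all $\epsilon$; since the $\kappa_j$ are cofinal in $\lambda$ by hypothesis and increasing, the subsequence $(\kappa_{i_\epsilon})_{\epsilon<\lambda}$ is still cofinal in $\lambda$, giving $\mu\ge\lambda$. For $\mu\le\lambda$: any ordinal $\xi<\mu$ of $M_\lambda$ can, as $M_\lambda$ is the transitive collapse of the direct limit, be written $\xi=\pi_{i_\epsilon,\lambda}(\bar\xi)$ for a suitable $\epsilon$ (chosen with $i_\epsilon$ above the index from which $\xi$ comes) and an ordinal $\bar\xi$ of $M_{i_\epsilon}$; order-preservation of $\pi_{i_\epsilon,\lambda}$ together with $\xi<\pi_{i_\epsilon,\lambda}(\kappa_{i_\epsilon})$ forces $\bar\xi<\kappa_{i_\epsilon}$, and then $\crit(\pi_{i_\epsilon,\lambda})=\kappa_{i_\epsilon}$ gives $\xi=\pi_{i_\epsilon,\lambda}(\bar\xi)=\bar\xi<\kappa_{i_\epsilon}<\lambda$. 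Thus $\mu\subseteq\lambda$, and combined with the previous inequality $\mu=\lambda$.

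The last step is the class-length case $\lambda=\On$, where $\bar M_\lambda$ need not be set-like and the conclusion is about order type rather than literal equality. Here I would note that the maps $\bar\pi_{i_\epsilon,\lambda}\upharpoonright\kappa_{i_\epsilon}$ are mutually coherent (because $\pi_{i_\epsilon,i_\beta}$ fixes ordinals below $\kappa_{i_\epsilon}$, again by strict monotonicity of the critical points) and order preserving, and that by exactly the argument of the $\mu\le\lambda$ step their common range is precisely the set of $\bar M_\lambda$-predecessors of $\bar\pi_{i_0,\lambda}(\kappa_{i_0})$. Their union is therefore an order isomorphism from $\bigcup_{\epsilon<\On}\kappa_{i_\epsilon}=\On$ onto that set of predecessors, so $\bar\pi_{i_0,\lambda}(\kappa_{i_0})$ has order type $\On$, as claimed.

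I do not expect a genuine obstacle here: the one thing to keep straight is the strict monotonicity of the iteration's critical points, which is what makes $\crit(\pi_{i_\epsilon,\lambda})=\kappa_{i_\epsilon}$ and hence makes both the counting in the $\mu\le\lambda$ step and the coherence of the maps in the $\On$ case go through; the cofinality hypothesis on the critical points below $\lambda$ is used only to secure $\mu\ge\lambda$ (respectively, surjectivity onto $\On$).
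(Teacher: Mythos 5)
Your proposal is correct and takes essentially the same route as the paper: the lower bound comes from the coherence of the sequence $(i_\epsilon)$ together with cofinality of the critical points below $\lambda$, and the upper bound from representing any ordinal of the direct limit as $\pi_{i_\epsilon,\lambda}(\bar\xi)$ and using that $\pi_{i_\epsilon,\lambda}$ is the identity below $\kappa_{i_\epsilon}$ (the paper phrases this step as a contradiction by pulling back $\lambda$ itself, and handles $\lambda=\On$ by saying the argument is identical, which you spell out). The monotonicity of critical points you highlight is likewise implicit in the paper's use of these embeddings, so there is no substantive difference.
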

\begin{proof}
	First we prove the case $\lambda \neq \On$. Clearly $\pi_{i_0,\lambda}(\kappa_{i_0})\geq \pi_{i_0,i_\epsilon}(\kappa_{i_0})=\kappa_{i_\epsilon}$ for all $\epsilon<\lambda$, and hence $\pi_{i_0,\lambda}(\kappa_{i_0})\geq \lambda$. On the other hand, suppose $\pi_{i_0,\lambda}(\kappa_{i_0})>\lambda$. For large enough $i_\epsilon<\lambda$, we can find a preimage $\bar{\lambda}=\pi_{i_\epsilon,\lambda}^{-1}(\lambda)\in M_{i_\epsilon}$ of $\lambda$ in $M_{i_\epsilon}$. By elementarity $\bar{\lambda}<\pi_{i_0,i_\epsilon}(\kappa_{i_0})=\kappa_{i_\epsilon}<\lambda$. But then $\pi_{i_\epsilon,\lambda}(\bar{\lambda})=\bar{\lambda}<\lambda$. Contradiction.
	
	The case $\lambda=\On$ is proved in exactly the same way; the notation is just a bit messier.
\end{proof}

Finally, we can also prove that if we did \textit{not} iterate any measurable below $\lambda$ cofinally often, then $\lambda$ will be a fixed point of the iteration.

\begin{lemma}\label{lemma V cardinals fixed points}
	Let $\lambda$ be an inaccessible of $V$, let $\mathcal{I}=\langle M_i\rangle_{i<\theta}$ be a set-length simple iteration of mice or weasels which all contain $\lambda$, whose critical points are all below $\lambda$. Suppose that there is no sequence of the sort described in Lemma \ref{Lemma V cardinals are limits of measure sequences}, i.e. no sequence $(i_\epsilon)_{\epsilon<\theta}$ of ordinals below $\theta$ such that for all $\epsilon<\beta<\theta$, $\pi_{i_\epsilon,i_\beta}(\kappa_{i_\epsilon})=\kappa_{i_\beta}$, and such that $(\kappa_{i_\epsilon})_{\epsilon<\theta}$ is cofinal below $\lambda$. Then $\lambda$ is a fixed point of $\mathcal{I}$.
\end{lemma}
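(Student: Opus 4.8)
The plan is to argue by contradiction: suppose $\lambda$ is not a fixed point of $\mathcal{I}$. Since all the $M_i$ contain $\lambda$ and all critical points lie below $\lambda$, the map $\pi_{0,\theta}$ acts on $\lambda$ and $\pi_{0,\theta}(\lambda)$ is an ordinal of $M_\theta$. If $\lambda$ is not fixed then $\pi_{0,\theta}(\lambda)>\lambda$ (it can never be moved down, being above all critical points); equivalently, $\pi_{i,j}(\lambda)>\lambda$ for some $i<j\le\theta$. The strategy is to extract from this failure exactly the kind of cofinal sequence of critical points that Lemma \ref{Lemma V cardinals are limits of measure sequences} produces — contradicting the hypothesis that no such sequence exists.

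First I would locate, for each $\eta<\lambda$, the set of critical points $\kappa_i$ with $\kappa_i<\lambda$ whose image $\pi_{i,\theta}(\kappa_i)$ exceeds $\eta$; the point is to show that the critical points which ``move past'' a given level are genuinely pushing $\lambda$ upward. Concretely, using the representation of elements of $M_\theta$ as $\pi_{0,\theta}(f)(\kappa_{i_0},\dots,\kappa_{i_n})$ (cited from \cite[4.2.4]{zemanInnerModelsAndLargeCardinals} and already used in the proof of Lemma \ref{lemma critical points Magidor generic}), pick a preimage witnessing $\pi_{i,j}(\lambda)>\lambda$ and trace which critical points are responsible. The key step is then: if the critical points below $\lambda$ are \emph{not} cofinal in $\lambda$, say bounded by $\mu<\lambda$, then $\pi_{i,j}$ restricted to the interval $[\mu,\lambda)$ is the identity and, since $\lambda$ is a limit ordinal (indeed inaccessible of $V$, hence a limit cardinal) with $\sup$ of that interval equal to $\lambda$, continuity of the direct limit forces $\pi_{i,j}(\lambda)=\lambda$ after all; so the critical points are cofinal in $\lambda$. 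Now, among these cofinally-many critical points, I would use $\lambda$'s regularity in $V$ (inaccessibility gives $\cf(\lambda)=\lambda>|M_0|\ge\theta$... — careful: $\theta$ may exceed $\lambda$, so instead) together with the argument from the Separation Property part of Lemma \ref{lemma critical points Magidor generic}: the images $\pi_{i,\theta}(\kappa_i)$ are non-decreasing in $i$, and one sifts out a subsequence $(i_\epsilon)_{\epsilon<\lambda}$ along which $\pi_{i_\epsilon,i_\beta}(\kappa_{i_\epsilon})=\kappa_{i_\beta}$ and $(\kappa_{i_\epsilon})$ is cofinal in $\lambda$ — this is precisely how one proves Lemma \ref{Lemma V cardinals are limits of measure sequences}, run in reverse from the hypothesis that $\lambda$ moves.

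The main obstacle I anticipate is the bookkeeping needed to produce the monotone cofinal subsequence of critical points with $\pi_{i_\epsilon,i_\beta}(\kappa_{i_\epsilon})=\kappa_{i_\beta}$: one must argue that the ``stabilised'' critical points — those $\kappa_i$ with $\pi_{i,\theta}(\kappa_i)$ below $\lambda$ but, crucially, with the property that no later critical point sits in the interval $(\kappa_i,\pi_{i,\theta}(\kappa_i))$ mapping below $\lambda$ — are themselves cofinal in $\lambda$, and then splice them together. This is exactly the dichotomy driving Lemma \ref{Lemma V cardinals are limits of measure sequences}: either such a sequence exists (contradiction with our hypothesis) or the critical points never quite reach $\lambda$ cofinally, i.e.\ $\lambda$ is a fixed point. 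So the cleanest write-up is probably to not reprove this but to observe that the hypothesis of the present lemma is the \emph{negation} of the conclusion of Lemma \ref{Lemma V cardinals are limits of measure sequences}, and that $\lambda$ failing to be a fixed point would, by the contrapositive of (a slight strengthening of) that lemma's proof, force the forbidden sequence to exist. I would keep the argument short by citing \cite[6.1.5]{zemanInnerModelsAndLargeCardinals} for the underlying mechanics and only spelling out the continuity-at-limits step, which is where inaccessibility of $\lambda$ is genuinely used.
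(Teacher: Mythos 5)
The paper only says the lemma ``follows from simple cardinal arithmetic'' and leaves the details to the reader, so there is no detailed argument to compare against; but your proposal, as written, has a genuine gap at its key step. You claim that if the critical points are bounded by some $\mu<\lambda$ then $\pi_{i,j}$ is the identity on the interval $[\mu,\lambda)$, and that continuity at $\lambda$ then forces $\pi_{i,j}(\lambda)=\lambda$. Iteration maps act as the identity only \emph{below} their critical points; ordinals \emph{above} the critical points are moved: already $\pi_{i,i+1}(\kappa_i+1)=\pi_{i,i+1}(\kappa_i)+1>\kappa_i+1$, and after many steps an ordinal of $[\mu,\lambda)$ can be sent far above $\mu$. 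What actually keeps such images (and $\lambda$ itself) from reaching $\lambda$ is the cardinal arithmetic the paper alludes to: $\lambda$ is regular (indeed inaccessible) in every $M_i$ by downward absoluteness, so an ultrapower map with critical point below $\lambda$ is continuous at $\lambda$, and for $\xi<\lambda$ every ordinal below $\pi_{i,j}(\xi)$ has the form $\pi_{i,j}(f)(\kappa_{m_1},\ldots,\kappa_{m_n})$ with $f\in M_i$, $f\colon[\beta]^n\to\xi$ and $\kappa_{m_1},\ldots,\kappa_{m_n}<\beta$ for some $\beta<\lambda$; counting such values using inaccessibility of $\lambda$ in $V$ keeps $\pi_{i,j}(\xi)$ below $\lambda$ at successor stages. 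That, not a continuity-plus-identity argument, is where inaccessibility is genuinely used. (A smaller slip of the same kind: the images $\pi_{i,\theta}(\kappa_i)$ need not be non-decreasing in $i$.)

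The dichotomy you then run is also not the right one. Having critical points cofinal in $\lambda$ does \emph{not} yield the forbidden sequence of Lemma \ref{Lemma V cardinals are limits of measure sequences}: hit cofinally many distinct measurables below $\lambda$ once each and the critical points are cofinal in $\lambda$, no coherent sequence exists, and $\lambda$ is fixed --- this is precisely the situation in which the lemma gets applied later (the ``useful ordinals'' argument). So ``critical points cofinal in $\lambda$, hence contradiction with the hypothesis'' is not available. The coherent sequence must instead be extracted from the assumption that $\lambda$ is actually moved: by the successor-step analysis this can only first happen at a limit stage $j$, where $\lambda=\pi_{k,j}(\bar\lambda_k)$ for some $\bar\lambda_k<\lambda$ and all large $k<j$, and one must show that these preimages arise from a single critical point whose images are mapped coherently and cofinally up to $\lambda$, i.e.\ exactly a sequence of the forbidden kind. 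Supplying that extraction is the real content, and it cannot be delegated to the proof or contrapositive of Lemma \ref{Lemma V cardinals are limits of measure sequences}, whose hypotheses ($\lvert M_0\rvert<\lambda$, iteration of length $\lambda$ with critical points cofinal in $\lambda$) fail here, since $\lambda\in M_0$ forces $\lvert M_0\rvert\geq\lambda$ and $\theta$ is an arbitrary set length.
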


The lemma follows from simple cardinal arithmetic; the details are left to the reader.

\section{Generating an inner model}

With those preliminaries done, we now return to O-Machete and Theorem \ref{theorem L[R]}. We shall actually prove a slightly more general result, of which the $\Reg$ statement given in the introduction is a special case:

\begin{theorem}\label{theorem L[R]}
	Let $R$ be a class of regular uncountable cardinals of $V$, and let $\alpha \leq \infty$. Suppose that $\Ome$ exists, and either:
	\begin{itemize}
		\item $\alpha<\infty$, none of the elements of $R$ have Cantor-Bendixson rank $\alpha$ in $R$, and $R$ has at most finitely many elements below $\alpha$; or
		\item $\alpha=\infty$ and $R$ has at most finitely many elements which are equal to their own Cantor-Bendixson rank in $R$.
	\end{itemize}
	Then there is a class long iterate $M_{\On}$ of $\Ome$ such that $L[R]$ is a hyperclass generic extension of $M_{\On}$.
\end{theorem}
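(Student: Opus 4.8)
The plan is to follow the template of \cite{welch_HaertigPaper}: first build a simple iteration $\mathcal{I}=\langle M_i,\pi_{i,j}\rangle_{i\le j\le\On}$ of $M_0=\Ome$ whose final model $M_{\On}$ is a weasel with a prescribed class of measurable cardinals below $\On$, and then force over $M_{\On}$ with the Magidor iteration of those measurables, using the critical points of $\mathcal{I}$ to manufacture the generic object. First I would make a harmless reduction. In either case $R$ has only finitely many ``bad'' elements — those below $\alpha$, respectively those equal to their own CB rank in $R$ — so we may fix an ordinal $\beta$ above all of them with $R\cap\beta$ a finite set of ordinals. Then $L[R]=L[R\setminus\beta]$ (a finite set of ordinals lies in $L$), and $R':=R\setminus\beta$ is a class of regular cardinals all of whose elements have CB rank $<\alpha$ in $R'$ and none of CB rank exactly $\alpha$ (respectively, in the $\alpha=\infty$ case, none equal to their own CB rank). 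It therefore suffices to realise $L[R']$ as a hyperclass generic extension of a class-long iterate of $\Ome$.

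The heart of the proof is the construction of $\mathcal{I}$. I would define $\langle M_i,\pi_{i,j}\rangle$ by recursion, maintaining the invariant that below a steadily advancing ``frontier'' the measurable cardinals of the current model already agree with $R'$ — both as a set and in their CB-rank structure — while above the frontier the model still carries an untouched final segment of the measure sequence of an iterate of $\Ome$, which by Proposition \ref{proposition basic properties of machetes} still contains, below its top measure, unboundedly many measurables of every CB rank $<\alpha$. To push the frontier past the next element $\eta\in R'$, of CB rank $\gamma$ say, one iterates a suitable rank-$\gamma$ measurable from the raw segment cofinally below $\eta$; by Lemma \ref{Lemma V cardinals end up with a measurable on them} together with Lemma \ref{Lemma V cardinals are limits of measure sequences} this makes $\eta$ measurable in the limit, while the $V$-cardinals \emph{not} in $R'$ — these are all limits of $R'$ — are never iterated cofinally below and so remain fixed points by Lemma \ref{lemma V cardinals fixed points}. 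Arranging that the surviving measurables acquire exactly the CB ranks dictated by $R'$ (rather than, e.g., accidentally producing a measurable of rank $\ge\alpha$ below $\On$, or making some $\eta$ a limit of rank-$\gamma$ survivors) requires a careful choice of critical points, in which earlier images are deliberately re-used as the raw material for later, smaller targets; I would also arrange that the entire bookkeeping is uniformly definable from $R'$. The top measure of $\Ome$ is carried along, rising to $\On$, so that $M_{\On}$ is a weasel $<^* O^{\text{Sword}}$ with $\Meas(M_{\On})\cap\On = R'$; being below $O^{\text{Sword}}$, $M_{\On}$ is determined by this measure sequence. For each $\lambda\in R'$ one then fixes an increasing cofinal $\omega$-sequence $S_\lambda\in\lambda^\omega$ of critical points $\kappa_i$ of $\mathcal{I}$ with $\pi_{i,\On}(\kappa_i)=\lambda$, and sets $S=(S_\lambda)_{\lambda\in R'}$.

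Next one forces. Let $\PP$ be the Magidor iteration of $R'=\Meas(M_{\On})\cap\On$, computed inside $M_{\On}$ with its canonical normal measures; this is a hyperclass forcing over $M_{\On}$. I claim the filter generated by $S$ is $\PP$-generic. Since $\PP$ is class-length while Theorem \ref{theorem Mathias Magidor criterion} and Lemma \ref{lemma critical points Magidor generic} concern set-length iterations, the point is to reduce to initial segments: for each inaccessible $\delta$ which is a limit of $R'$, the restriction $\PP_\delta$ is a set forcing, $R'\cap\delta$ is a bounded set, and applying Lemma \ref{lemma critical points Magidor generic} to $\mathcal{I}$ below $\delta$ shows $S\restriction(R'\cap\delta)$ is $\PP_\delta$-generic over $M_{\On}$; genericity for the whole of $\PP$ then follows because, by the chain-condition analysis of Corollary \ref{Corollary Magidor chain condition}, every definable dense subclass of $\PP$ is decided inside some such $\PP_\delta$. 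Write $G$ for the resulting generic filter.

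Finally one identifies $M_{\On}[G]$ with $L[R]$. For $M_{\On}[G]\subseteq L[R]$: $M_{\On}\in L[R]$, since $M_{\On}$ is a weasel below $O^{\text{Sword}}$ and such a weasel is constructible from, and determined by, the pattern of its (order-zero) measure sequence, all of which is read off canonically from $R'=R\setminus\beta\in L[R]$ — this is the reconstruction that, in the $R=\Reg$ case, underlies Theorem \ref{theorem friendly machetes}; and $S$ is definable over $L[R]$ from $R$, the bookkeeping of $\mathcal{I}$ having been chosen definably from $R'$ and the Prikry points leaving the first-order trace on the measure structure of the extension described by Proposition \ref{proposition magidor forcing measure generating sets}. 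For $L[R]\subseteq M_{\On}[G]$: in $M_{\On}[G]$ the ground model $M_{\On}$ is definable (with $S$ as a parameter), and $M_{\On}$ computes $\Meas(M_{\On})\cap\On = R'$; since $R\cap\beta$ is a finite set of ordinals, $R = R'\cup(R\cap\beta)\in M_{\On}[G]$, whence $L[R]\subseteq M_{\On}[G]$. I expect the main obstacle to be the second paragraph: the simultaneous recursion defining $\mathcal{I}$, and the verification that the CB-rank structure of $\Meas(M_{\On})$ mirrors that of $R'$ exactly while the bookkeeping stays $L[R]$-definable, is the delicate part; the genericity and the identification steps are then comparatively routine given the earlier lemmas and the methods of \cite{welch_HaertigPaper}.
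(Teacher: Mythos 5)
Your plan founders on the choice of which class to turn into the measurables of the final model. You propose to arrange $\Meas(M_{\On})\cap\On=R'$ and then Magidor-force over $M_{\On}$ at exactly these measurables. But then $R'$ is a definable class of the weasel $M_{\On}$ (it is the class of critical points of measures on its extender sequence), so each $R'\cap\xi$ is a set of $M_{\On}$ and the relativized hierarchy $L_\xi[R']$ is computed correctly inside $M_{\On}$; hence $L[R]=L[R']\subseteq M_{\On}$. On the other hand the Magidor iteration of a proper class of $M_{\On}$-measurables is nontrivial: it adds, below each $\lambda\in R'$, an $\omega$-sequence cofinal in $\lambda$ which cannot lie in $M_{\On}$ since $\lambda$ is regular there. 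So $M_{\On}\subsetneq M_{\On}[G]$, and therefore $M_{\On}[G]\neq L[R]$ no matter how delicately the bookkeeping of $\mathcal{I}$ is arranged; the identity you aim for is unreachable once the measure class of $M_{\On}$ is $R'$ itself. A second, related gap sits in your inclusion $M_{\On}[G]\subseteq L[R]$: your generic sequences consist of critical points of $\mathcal{I}$, which are defined from $\Ome$ and the bookkeeping, not from $R$; the hypotheses of the theorem do not put $\Ome$ (let alone the iteration) inside $L[R]$ (Theorem \ref{theorem friendly machetes} needs $\On$ Mahlo and concerns $K^{L[\Reg]}$ only), and a weasel is not determined by the bare class of its measurable cardinals --- one must also identify the measure-one sets inside $L[R]$, which your setup gives no way to do.

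The paper's construction is designed exactly to avoid this circle. The measurables of $M_{\On}$ are not the elements of $R$ but the class $W=\bigcup_{\beta<\alpha}W_\beta$ of $\omega$-limit points of the $R_\beta$'s; these have $V$-cofinality $\omega$, so they are not in $R$, and $M_{\On}$ by itself does not know $R$. The elements of $R$ themselves (above the finitely many bad ones) are used as the critical points of $\mathcal{I}$ (Lemma \ref{lemma critical points line up with Reg}), so the generic object $T$ is literally $R$ arranged into the $\omega$-blocks $S^\lambda_\beta$ below each $\lambda\in W_\beta$: the forcing adds exactly the information of $R$, giving $L[R]\subseteq M_{\On}[T]$. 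Conversely $L[R]$ can rebuild the extender sequence of $M_{\On}$ because a subset of $\lambda$ lying in $M_{\On}$ is measure one iff it contains all but finitely many terms of $S^\lambda_\beta$, a test expressible from $R$ (Lemma \ref{lemma M_infty contained in L[Reg]}); this is precisely the step your version loses when the Prikry points are auxiliary ordinals rather than blocks of $R$. If you repair your construction by swapping the roles --- measurables at the $\omega$-limits, elements of $R$ as Prikry points --- you are back at the paper's argument, and the remaining work (set-length genericity via Lemma \ref{lemma critical points Magidor generic}, the transfer to the hyperclass forcing through the $\MK^{**}$ structures, which needs the technical claim pushing conditions along the iteration maps rather than just the chain condition) goes through as in the text.
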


We get the first version of the theorem given in the introduction by taking $\alpha=\infty$ and $R=\Reg\setminus \Hype$, and the second by taking $R=\Reg_{<\alpha}$. In the former case, if $\kappa \in \Reg$ has Cantor-Bendixson rank $\kappa$ in $\Reg$, then by definition $\kappa \in \Hype$. In the latter case, the first $\omega$ elements of $R$ are the uncountable cardinals below $\aleph_\omega$, and so the assumption that $\alpha<\aleph_\omega$ tells us there are only finitely many elements of $R$ below $\alpha$.

The rest of this section will be devoted to proving the theorem.

\subsection{Iterating O-Machete}
\noindent \\
For $\beta<\alpha$, let $R_\beta$ be the class of elements of $R$ of rank $\beta$. (So if $R=\Reg_{<\alpha}$ then $R_\beta=\Reg_\beta$.) Let $W_\beta$ be the class of $\omega$ limits of $R_\beta$. For $\lambda\in W_\beta$, let $S_\beta^\lambda$ be the $\omega$ sequence of cardinals in $R_\beta$ immediately preceding $\lambda$: i.e. $S_\beta^\lambda$ is the $\omega$ many elements of $(R_\beta\cap \lambda) \setminus \sup (W_\beta \cap \lambda)$, ordered as an increasing sequence. Notice that $\bigcup_{\lambda\in W_\beta}S_\beta^\lambda\subset R_\beta$ differs from $R_\beta$ only by a finite tail. Finally, let $W=\bigcup_{\beta<\alpha}W_\beta$.
	
	Let $M_0=\Ome$. We saw in \ref{proposition basic properties of machetes} that the extender sequence of $M_0$ contains unboundedly many measurables of every rank below $\alpha$, but none of rank $\alpha$ or above (apart from its top measure). Let $C$ denote the collection of all the measurables in the extender sequence of $M_0$ (below its top measure).
	
	Let $\mathcal{I}=\langle M_i, \pi_{i,j}\rangle$ be the $\On+1$ long iteration which is defined by the following rule:
	
	\begin{quotation}
		If $i\in \On$ and $M_i$ is defined, then the critical point used to generate $M_{i+1}$ is the first measurable $\kappa$ in $\pi_{0,i}(C)$ which is not an element of $W$. If no such measurables exist, then instead the critical point used is the top measure of $M_i$. As usual, if $i\in \Lim$ then we take direct limits.
	\end{quotation}
	
	$M_{\On}$ is then the cut-down of the direct limit of $\langle M_i\rangle_{i<\On}$ to the height of the ordinals.
	
	It is immediately clear that we can follow this rule in any iterate $M_i$, so this gives us a well-defined iteration with no trivial stages.
	
	\begin{lemma}\label{lemma critical points line up with Reg}
		\noindent
		\begin{enumerate}
			\item Let $\lambda<\beta<\alpha$, $\lambda \in R_\beta$, and if $\alpha<\infty$ then suppose $\lambda>\alpha$. Then $\lambda$ is a critical point $\kappa_i$ of the iteration $\mathcal{I}$, and $\pi_{i,\On}(\lambda)$ is the least element of $W_\beta$ above $\lambda$ (or if $W_\beta\subset \lambda$ then $\lambda$ is iterated out of $M_{\On}$ and $\pi_{i,\On}(\lambda)$ is undefined.)
			\item $\pi_{0,\On}(C)=\bigcup_{\gamma<\alpha}W_\gamma$
		\end{enumerate}
	\end{lemma}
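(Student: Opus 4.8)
The plan is to prove both parts by transfinite induction on the stage of the iteration $\mathcal{I}$, tracking carefully which measurables in $\pi_{0,i}(C)$ have been ``used'' (i.e.\ iterated past) and which remain. The key structural fact to establish and maintain as an inductive hypothesis is: at every stage $i$, the set $\pi_{0,i}(C)$ decomposes as the elements of $W$ that are $\leq$ some bound, together with an unbounded ``tail'' of measurables of each rank $<\alpha$ (below the top measure of $M_i$) that are \emph{not} in $W$, and the rule of $\mathcal{I}$ always picks the least such non-$W$ measurable. Because $M_0=\Ome$ has, for each $\beta<\alpha$, a class (in the sense of $M_0$) $C_\beta$ of measurables of CB-rank $\beta$ cofinal below $\kappa_{M_0}$, and because applying $\pi_{i,i+1}$ (an ultrapower by the chosen critical point $\kappa$) sends $\kappa$ to the next measurable above it of the same rank — more precisely makes $\pi_{i,i+1}(\kappa)$ a limit of measurables of rank $<\beta$ and hence of rank $\beta$ — the non-$W$ measurables of rank $\beta$ get enumerated in increasing order and matched up one-by-one with the elements of $W_\beta$.

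For part (1), I would argue as follows. Fix $\beta<\alpha$ and $\lambda\in R_\beta$ with $\lambda>\beta$ (and $\lambda>\alpha$ if $\alpha<\infty$). Using Lemma~\ref{Lemma V cardinals are limits of measure sequences} and Lemma~\ref{Lemma V cardinals end up with a measurable on them}: since $\lambda$ is a regular cardinal of $V$ larger than $\lvert M_0\rvert$, and since (by the definition of the rule for $\mathcal{I}$, together with the observation that below $\lambda$ the iteration is cofinally iterating measurables) the critical points below $\lambda$ are cofinal in $\lambda$, we get that $\lambda$ is a critical point $\kappa_i$ of the continuation, i.e.\ $\pi_{i_0,i}(\kappa_{i_0})=\lambda$ for the appropriate $i$; alternatively, by Lemma~\ref{lemma V cardinals fixed points}, any $V$-inaccessible at which no measure is iterated cofinally is a fixed point, which we use to rule out the spurious cases. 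The point that $\lambda\in R_\beta$ forces it to be iterated (rather than a fixed point) is exactly that $\lambda$, being a CB-rank-$\beta$ point of $R$, is the limit of an increasing $\omega$-cofinal-below-it pattern of rank-$<\beta$ elements of $R$ that in turn (inductively, by the same argument at lower ranks) correspond to critical points $\kappa_j$ with $\pi_{j,i}(\kappa_j)$ cofinal in $\lambda$ and of the right rank, so Lemma~\ref{Lemma V cardinals are limits of measure sequences} applies and $\lambda$ becomes measurable in $M_i$ of CB-rank $\beta$ among its measurables. Then one checks $\lambda\notin W$ (it is a limit point of $R_\beta$, not one of the finitely-many-stacked $S_\beta^\mu$ enumerators feeding into a single $\mu\in W_\beta$ — or more simply $\lambda$ is regular of uncountable cofinality while elements of $W_\beta$ have cofinality $\omega$), so the rule of $\mathcal{I}$ requires us to use $\lambda$ as a critical point at stage $i$; and $\pi_{i,\On}(\lambda)$, being obtained by an ultrapower sending $\lambda$ to the next rank-$\beta$ non-$W$ point, i.e.\ the next thing that collects $\omega$ copies of the $S_\beta$-pattern, is precisely the least element of $W_\beta$ above $\lambda$ — unless $W_\beta\subseteq\lambda$, in which case $\lambda$ is eventually the top and gets iterated out.

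For part (2), $\pi_{0,\On}(C)=\bigcup_{\gamma<\alpha}W_\gamma$ follows from part (1) together with a counting of what survives: each $\lambda\in R_\gamma$ (for large enough $\lambda$) is a critical point that gets mapped to the least element of $W_\gamma$ above it, so in the limit the range of $C$ is exhausted by the $W_\gamma$'s; conversely each $\mu\in W_\gamma$ is $\pi_{i,\On}(\lambda)$ for the largest $\lambda\in R_\gamma$ below $\mu$ (namely the top of the pattern $S_\gamma^\mu$ feeding into $\mu$), so every element of $\bigcup_{\gamma<\alpha}W_\gamma$ is in the range. One also has to note that the top measure of $M_0$ (which is in the extender sequence but excluded from $C$) and the finitely many elements of $R$ below $\alpha$, as well as any finite discrepancy between $\bigcup_{\mu\in W_\beta}S_\beta^\mu$ and $R_\beta$, do not affect the class-level statement, and that the hypotheses of Theorem~\ref{theorem L[R]} (no rank-$\alpha$ elements; finitely many elements below $\alpha$, resp.\ finitely many self-CB-rank elements) are exactly what is needed so that $\Ome$ (resp.\ $\Ominf$) is the right mouse and the matching is clean. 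The main obstacle will be part (1): rigorously verifying, by simultaneous induction on $\beta$, that the critical points below a given $\lambda\in R_\beta$ have the order type and cofinality pattern needed to invoke Lemma~\ref{Lemma V cardinals are limits of measure sequences} at $\lambda$, and in particular that the bookkeeping of which measurables are ``in $W$'' is preserved by the iteration maps $\pi_{i,j}$ — i.e.\ that $\pi_{i,j}$ sends the ``$W$-part'' of $\pi_{0,i}(C)$ into the ``$W$-part'' of $\pi_{0,j}(C)$ and likewise for the non-$W$ tail. Everything else is either a routine application of the three fixed-point lemmas or straightforward cardinal arithmetic of the kind the paper has already been deferring to the reader.
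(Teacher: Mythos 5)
Your high-level plan (simultaneous induction on the rank $\beta$, the three fixed-point lemmas, and the observation that elements of $W$ have $V$-cofinality $\omega$ while elements of $R$ are regular) is the paper's strategy, but the two steps that carry the whole lemma are not actually supplied, and the mechanisms you state for them would fail. First, the reason the critical points of $\mathcal{I}$ are cofinal below a given $\lambda\in R_\beta$ is \emph{not} that the rank-$<\beta$ elements of $R$ below $\lambda$ are inductively critical points: for $\beta=0$ the ordinal $\lambda$ is an isolated point of $R$ (a successor cardinal, say, or the least inaccessible above some ordinal), so there need be no elements of $R$ cofinal in $\lambda$ at all, and in general the critical points below $\lambda$ are mostly not members of $R$. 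Your other justification (``the observation that below $\lambda$ the iteration is cofinally iterating measurables'') is exactly what has to be proved, so the argument is circular at the base case. What does the work in the paper is a cardinality computation: letting $i$ be least with $\kappa_i\geq\lambda$, one shows $i=\lambda$, using $\lvert \Ome\rvert\leq\max(\aleph_0,\lvert\alpha\rvert)<\lambda$ (this is where the hypothesis $\lambda>\alpha$ enters), the fact that a single ultrapower does not raise $V$-cardinality (so new sets of size $\lambda$ can only appear at a direct-limit stage), and the regularity of $\lambda$; then $(\kappa_j)_{j<\lambda}$ is a $\lambda$-long increasing sequence of ordinals below $\lambda$, hence unbounded, and Lemma \ref{Lemma V cardinals end up with a measurable on them} makes $\lambda$ measurable in $M_\lambda$, after which the rule of $\mathcal{I}$ (all smaller members of $\pi_{0,\lambda}(C)$ lie in $W$, while $\lambda\notin W$ by cofinality) forces $\kappa_\lambda=\lambda$.

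Second, your stated mechanism for computing $\pi_{i,\On}(\lambda)$ — that the single ultrapower $\pi_{i,i+1}$ ``sends $\kappa$ to the next measurable above it of the same rank'' — is false: $\pi_{i,i+1}(\kappa)$ lies far above the next measurable, and in any case the image keeps moving under later maps, so nothing is yet said about where $\lambda$ lands in $M_{\On}$. The actual argument needs the simultaneous induction in a sharper form: using $W_\gamma\subseteq\pi_{0,\On}(C)$ for $\gamma<\beta$ together with the iteration rule, one shows $\lambda$ has CB rank \emph{exactly} $\beta$ in $\pi_{0,i}(C)$, so that $\lambda$ is definable in $M_i$ as the least rank-$\beta$ element of $\pi_{0,i}(C)$ above $\nu:=\sup(W_\beta\cap\lambda)$; since $\nu,\beta<\kappa_i$ are fixed by $\pi_{i,j}$, elementarity gives $\pi_{i,j}(\lambda)=\lambda'$, the immediate successor of $\lambda$ in $R_\beta$; iterating this through the $\omega$-block $S_\beta^\mu$ and taking the direct limit gives $\pi_{i,\On}(\lambda)=\mu=\min(W_\beta\setminus\lambda)$, because $\mu\in W$ is never chosen as a critical point and no later critical point lies below $\mu$. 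You correctly flag this bookkeeping as the main obstacle, but the proposal does not carry it out, and the one-step mechanism you do give cannot replace it. The remaining slips are minor and repairable: in part (2) there is no ``largest element of $R_\gamma$ below $\mu$'' (the sequence $S_\gamma^\mu$ is cofinal in $\mu$; any of its members works); in the case $W_\beta\subseteq\lambda$ the ordinal $\lambda$ never becomes the top measure — rather its images are moved unboundedly because cofinally many non-$W$ measurables persist below them; and the inclusion $\pi_{0,\On}(C)\subseteq\bigcup_{\gamma<\alpha}W_\gamma$ follows directly from the rule once an element of the image sits below the current critical point, with no counting needed.
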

	\begin{proof}
		Let $\lambda \in R_\beta$ be as described. Let $i<\On$ be least such that $\kappa_i\geq \lambda$.
		
		\begin{claim}
			$i=\lambda$
		\end{claim}
		\begin{proof}
			By \ref{proposition basic properties of machetes}, if $\alpha=\infty$ then $\Ome$ is countable, and otherwise $\Ome$ has cardinality equal to $\max(\omega,\lvert \alpha \rvert)$. In either case, $\lvert \Ome \rvert \leq \lambda$. On the other hand, obviously $M_i$ has $V$ cardinality at least $\lambda$. Let $j\leq i$ be least such that $\lvert M_j\rvert \geq \lambda$. It is easy to see that for $\gamma$ an infinite ordinal,  $\lvert J_\gamma^E\rvert=\lvert \gamma \rvert$, so $j$ is also least such that $\lambda \in M_j$.
			
			Since taking a single ultrapower does not increase the $V$ cardinality of a mouse, we know that $j$ is not a successor cardinal. So $M_j$ is a direct limit model, and for some $k<j$, there exists $\bar{\lambda}\in M_k$ such that $\pi_{k,j}(\bar{\lambda})=\lambda$.
			
			The sequence $\{\pi_{k,j'}(\bar{\lambda}): k<j'<j\}$ is cofinal below $\lambda$. Since $\lambda$ is regular, it follows that $\lambda \leq j \leq i$. On the other hand, by definition of $i$, there is an $i$ long sequence of critical points below $\lambda$, so $i\leq \lambda$.
		\end{proof}
		
		So the sequence of critical points $(\kappa_j)_{j<i}$ of the iteration $M_0\rightarrow M_i$ is a $\lambda$ long increasing sequence below $\lambda$, and is therefore unbounded. We've also already seen $\On\cap \Ome<\lambda$. By Lemma \ref{Lemma V cardinals end up with a measurable on them}, in $M_i=M_\lambda$ there is a measurable on $\lambda$ and $\lambda\in\pi_{0,i}(C)$.
		
		It is now easy to see that the rule for constructing the iteration makes $\lambda$ the $i$'th critical point $\kappa_i$, because if $\mu \in \pi_{0,i}(C)\cap \lambda$ then for some $j<i$, $\mu \in \pi_{0,j}(C)$ and $\kappa_j>\mu$; hence $\mu\in W$. On the other hand $\lambda \not \in W$ since all the elements of $W$ have $V$ cofinality $\omega$, while $\lambda\in R$ is regular in $V$.
		
		It remains to show that $\pi_{i,\On}(\lambda)$ is the smallest element of $W_\beta$ above $\lambda$ (or does not exist if $\lambda>\sup W_\beta$). We do this by proving the following two statements together, using induction on $\beta<\alpha$ but simultaneously for all $\lambda$:
		
		\begin{claim}
			\noindent
			\begin{enumerate}
				\item \label{induction critical points reg part 1} If $\lambda'$ is the immediate successor of $\lambda$ in $R_\beta$, and $\lambda=\kappa_i$ and $\lambda'=\kappa_j$, then $\pi_{i,j}(\lambda)=\lambda'$.\footnote{Of course, we've just shown that in fact $i=\lambda$ and $j=\lambda'$, but we use $i$ and $j$ to emphasise that we're thinking of the ordinals as stages of the iteration.}
				\item \label{induction critical points reg part 2} If $\mu=\min W_\beta\setminus \lambda$ and $\lambda=\kappa_i$, then $\pi_{i,\On}(\lambda)=\mu$. If $W_\epsilon \setminus \lambda=\emptyset$ then $\pi_{i,j}(\lambda)$ is unbounded as $j$ increases.
				\item \label{induction critical points reg part 3} $W_\beta \subset\pi_{0,\On}(C)$.
			\end{enumerate}
		\end{claim}
		\begin{proof}
			Induction on $\beta$. Suppose that both \ref{induction critical points reg part 1} and \ref{induction critical points reg part 2} hold for all $\gamma<\beta$.
			
			\ref{induction critical points reg part 1}: Note that $\lambda>\beta$: if $\alpha<\infty$ then this is immediate as $\lambda>\alpha$; and if $\alpha=\infty$ then it comes out of the assumptions about $R$.
			
			We know that $\lambda=\kappa_i$ is the critical point at stage $i$. The first step is to show that its Cantor-Bendixson rank in the class $\pi_{0,i}(C)$ of measurables of $M_i$ is exactly $\beta$. Since we are assuming \ref{induction critical points reg part 3} holds for all $\gamma<\beta$, and since $\pi_{0,\On}(C)\upharpoonleft \kappa_i = \pi_{0,i}(C)\upharpoonleft \kappa_i$, we know that $\pi_{0,i}(C)$ contains all of $\bigcup_{\gamma<\beta}W_\gamma$, and hence its C-B rank must be at least $\beta$.
						
			On the other hand, since $\lambda$ is the critical point at stage $i$ of the iteration, we know that every element of $\pi_{0,i}(C)$ below $\lambda$ is in $W_\gamma$ for some $\gamma<\alpha$. And $\lambda$ has Cantor-Bendixon rank $\beta$ in the class $\bigcup W_\gamma$. So its rank in $\pi_{0,i}(C)$ is at most $\beta$, and hence it is exactly $\beta$.
			
			Let $\nu=\sup(W_\beta\cap \lambda)<\lambda$. We have shown that $\lambda$ is the least measurable in $\pi_{0,i}(C)$ which is strictly above $\nu$ and has Cantor Bendixson rank $\beta$ in $\pi_{0,i}(C)$. The same argument shows that $\lambda'$ is the least measurable in $\pi_{0,j}(C)$ which is strictly above $\sup(W_\beta \cap \lambda')$ and has rank $\beta$.
			
			But $W_\beta$ is a collection of $\omega$ limits of $R_\beta$, and $\lambda'$ is the immediate successor of $\lambda$ in $R_\beta$, so $\sup(W_\beta \cap \lambda')=\nu$. And since $\nu,\beta<\lambda=\kappa_i$, we know that $\pi_{i,j}(\nu)=\nu$ and $\pi_{i,j}(\beta)=\beta$. Since $\pi_{i,j}$ is elementary, $\pi_{i,j}(\lambda)=\lambda'$ as required.
			
			\ref{induction critical points reg part 2}:	Suppose that $\lambda<\sup R_\beta$. Let $\lambda=\kappa_i=\kappa_{i_0}<\kappa_{i_1}<\ldots$ be the first $\omega$ many elements of $R_\beta$ above $\lambda$, whose supremum $\mu$ is the least element of $W_\beta$ above $\lambda$. Let $j=\sup \{i_n: n\in \omega\}$. By induction on \ref{induction critical points reg part 1} we know $\pi_{i_0,i_n}(\lambda)=\kappa_{i_n}$ for all $n\in \omega$. So since $M_j$ is a direct limit model, $\pi_{i,j}(\lambda)=\sup \{\kappa_n: n\in \omega\}=\mu$.
			
			By stage $j$ of the iteration $\mathcal{I}$, we have seen unboundedly large critical points below $\mu$ (namely, $\kappa_{i_0},\kappa_{i_1},\ldots$). So from $j$ onwards, the remainder of the iteration has no critical points below $\mu$. Moreover, $\mu\in W_\beta$ and we defined $\mathcal{I}$ such that no element of $W_\beta$ would ever be a critical point. So all the critical points of the iteration from $M_j$ to $M_{\On}$ are strictly greater than $\mu$. Hence $\pi_{j,\On}(\mu)=\mu$ and so $\pi_{i,\On}(\lambda)=\mu$.
			
			Now suppose $\lambda \geq \sup W_\beta$. As we saw earlier, $\lambda$ is of rank at least $\beta$ in $\pi_{0,i}(C)$. This will also be true (by elementarity) about $\pi_{i,j}(\lambda)$ in $\pi_{0,j}(C)$, for $i<j<\On$. So for all such $j$, we know $M_j$ will contain cofinally many measurables $\kappa \leq \pi_{i,j}(\lambda)$ of $\pi_{0,j}(C)$ which are not elements of $\bigcup_{\gamma<\beta} W_\gamma$. Since $\bigcup_{\gamma<\beta}W_\gamma$ agrees with $\bigcup_{\gamma<\alpha}W_\gamma$ above $\lambda$, it follows that sufficiently large such $\kappa$ are not in $\bigcup_{\gamma<\alpha}W_\gamma$, and hence $\kappa_j\leq \kappa < \pi_{0,j}(\lambda)$. But the $\kappa_j$ form a class long increasing sequence, and hence $\pi_{i,j}(\lambda)$ increases unboundedly as $j$ increases and $\pi_{i,\On}(\lambda)$ is undefined.
			
			\ref{induction critical points reg part 3} Let $\mu \in W_\beta$. Let $\lambda=\kappa_i$ be a cardinal in the $\omega$ sequence of $R_\beta$ immediately preceding $\mu$. Then $\mu=\min W_\beta \setminus \lambda$, and hence by \ref{induction critical points reg part 2} $\pi_{i,\On}(\lambda)=\mu$. But $\lambda=\kappa_i \in \pi_{0,i}(C)$ (since it is a critical point, and hence measurable in $M_i$), and hence $\pi_{i,\On}(\lambda)\in \pi_{0,\On}(C)$.
		\end{proof}
		
		This claim completes the proof of the first part of the lemma, and also shows that $\pi_{0,\On}(C)\supset \bigcup_{\gamma<\alpha}W_\gamma$. All that remains is to see that $\pi_{0,\On}(C)\subset \bigcup_{\gamma<\alpha}W_\gamma$. This follows almost immediately from the definition of $\mathcal{I}$. Suppose that $\lambda \in \pi_{0,\On}(C)$. Then for large enough $i$, there is some $\bar{\lambda}\leq \lambda$ such that $\pi_{i,\On}(\bar{\lambda})=\lambda$. If $i$ is also large enough that $\kappa_i>\lambda$, then we have $\bar{\lambda}=\lambda$, and hence $\lambda \in \pi_{0,i}(C)$. But since $\kappa_i>\lambda$, the definition of $\mathcal{I}$ tells us that $\lambda \in \bigcup_{\gamma<\alpha}W_\gamma$.
	\end{proof}
	
	\begin{corollary}\label{Corollary set size machete genericity}
		Let $\kappa \in \pi_{0,\On}(C)$. In $M_{\On}$, let $\PP_\kappa$ be the Magidor iteration of $\pi_{0,\On}(C){\upharpoonleft} \kappa$. Then
		
		$$T^\kappa:=\{(\lambda,S_\beta^\lambda): \beta<\alpha,\lambda \in W_\beta \cap \kappa\} $$
		
		is generic for $\PP_\kappa$.
	\end{corollary}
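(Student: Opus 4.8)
The plan is to read this off Lemma~\ref{lemma critical points Magidor generic}, together with the elementary fact that a Magidor iteration of an initial segment of a set or class of measurables is an initial segment of the Magidor iteration of the whole. For $\lambda\in\pi_{0,\On}(C)$ let $\beta(\lambda)$ be the unique $\beta<\alpha$ with $\lambda\in W_\beta$, and put $S_\lambda:=S^{\lambda}_{\beta(\lambda)}$. By Lemma~\ref{lemma critical points line up with Reg}(1) and the Claim in its proof, each entry $\nu$ of $S_\lambda$ lies in $R_{\beta(\lambda)}$, is a critical point $\kappa_i$ of $\mathcal I$ with $i=\nu$, and satisfies $\pi_{i,\On}(\nu)=\min(W_{\beta(\lambda)}\setminus(\nu+1))=\lambda$; moreover $S_\lambda$ is increasing and cofinal in $\lambda$. (If $\alpha<\infty$, the finitely many entries of $S_\lambda$ that are $\le\alpha$ may fail to be critical points; one passes to the tail of $S_\lambda$ above $\alpha$, which generates the same filter.) Also $\pi_{0,\On}(C)=\bigcup_{\gamma<\alpha}W_\gamma=W$ by Lemma~\ref{lemma critical points line up with Reg}(2). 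Hence Lemma~\ref{lemma critical points Magidor generic} applies, with $\theta=\On$, and $S:=(S_\lambda)_{\lambda\in W}$ generates a filter $G$ that is generic over $M_{\On}$ for the Magidor iteration $\PP$ of $W$.

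Since $\kappa\in\pi_{0,\On}(C)=W$, the forcing $\PP_\kappa$ of the Corollary -- the Magidor iteration of $\pi_{0,\On}(C){\upharpoonleft}\kappa=W\cap\kappa$ -- is precisely the forcing denoted ``$\PP_\kappa$'' in the definition of the Magidor iteration $\PP$ of $W$: one has $\PP\cong\PP_\kappa * \dot{\QQ}_\kappa * \dot{\PP}_{>\kappa}$, with $p\mapsto p{\upharpoonleft}\kappa$ a projection of $\PP$ onto $\PP_\kappa$. Therefore $G_\kappa:=\{\,p{\upharpoonleft}\kappa : p\in G\,\}$ generates a $\PP_\kappa$-generic filter over $M_{\On}$, and the generic sequence it determines is the restriction of $S$ to coordinates below $\kappa$, namely $(S_\lambda)_{\lambda\in W\cap\kappa}=\{(\lambda,S^\lambda_\beta):\beta<\alpha,\ \lambda\in W_\beta\cap\kappa\}=T^\kappa$. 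This is the assertion of the Corollary.

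One point needs care: $W$ is a proper class and $\PP$ a hyperclass forcing, whereas Lemma~\ref{lemma critical points Magidor generic} ultimately rests on the Mathias--Magidor criterion (Theorem~\ref{theorem Mathias Magidor criterion}), which is stated only for set-length iterations. For the present statement this is harmless, because $W\cap\kappa$ is a \emph{set} in the $\ZFC$-model $M_{\On}$. One therefore applies Theorem~\ref{theorem Mathias Magidor criterion} \emph{inside} $M_{\On}$, to the set $W\cap\kappa$, the measures $U_\lambda$ ($\lambda\in W\cap\kappa$) and the forcing $\PP_\kappa$, and verifies the Mathias Criterion and the Separation Property for $T^\kappa$ by rerunning the proof of Lemma~\ref{lemma critical points Magidor generic} verbatim, with $\theta:=\On$, with $W\cap\kappa$ in place of the set ``$\pi_{0,\theta}(C)$'', and restricting every quantifier over coordinates to $\lambda<\kappa$. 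Every step there is local below $\kappa$ (the ordinal $\tau_X$ of that proof satisfies $\tau_X\le\sup(W\cap\kappa)\le\kappa$, and the decisive Claim~\ref{claim critical points in measure 1 set} only concerns critical points whose image under $\pi_{j,\On}$ lies below $\kappa$), and it uses only that an arbitrary $X\in M_{\On}$ admits a term representation $X=\pi_{0,\On}(f)(\kappa_{i_0},\dots,\kappa_{i_n}){\upharpoonleft}\tau_X$ with $f\in\Ome$, which holds because $M_{\On}$ is the final model of the iteration $\mathcal I$.

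I expect the genuine work to be the two bookkeeping verifications already signalled. The first -- that the $S^{\lambda}_{\beta(\lambda)}$ really are the sequences of critical points that Lemma~\ref{lemma critical points Magidor generic} demands -- is almost entirely furnished by Lemma~\ref{lemma critical points line up with Reg}, the only residue being the handling of the $\le\alpha$ tail in the case $\alpha<\infty$. The second -- that the factoring $\PP\cong\PP_\kappa * \dot{\QQ}_\kappa * \dot{\PP}_{>\kappa}$ and the projection $p\mapsto p{\upharpoonleft}\kappa$ behave correctly at the set-stage $\kappa$ of a \emph{hyperclass} Magidor iteration -- is routine but needs the formalism of hyperclass forcing; alternatively, as indicated above, one can bypass it entirely by arguing the set-version of the Mathias--Magidor criterion directly for $W\cap\kappa$ inside $M_{\On}$.
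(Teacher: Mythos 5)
Your proposal is correct and follows essentially the same route as the paper: the paper's proof is exactly the observation that Lemma~\ref{lemma critical points line up with Reg} makes all but finitely many terms of $T^\kappa$ cofinal sequences of critical points mapping to their coordinate, so the criterion established in Lemma~\ref{lemma critical points Magidor generic} (via the set-level Theorem~\ref{theorem Mathias Magidor criterion}, applicable since $\pi_{0,\On}(C)\upharpoonright\kappa$ is a set in $M_{\On}$) yields genericity for $\PP_\kappa$. Your extra care about the hyperclass iteration of all of $W$ is not needed at this stage -- that issue is exactly what the paper defers to the following subsection -- but your preferred ``work below $\kappa$'' resolution coincides with the paper's intent.
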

	\begin{proof}
		Since Lemma \ref{lemma critical points line up with Reg} holds for all but finitely many terms of $T^\kappa$, we know that $T^\kappa$ satisfies the condition for genericity proved in Lemma \ref{lemma critical points Magidor generic}.
	\end{proof}
	
	\subsection{The Hyperclass Generic Extension}
	\noindent \\
	So we now have our generic extension. There is a subtle issue we still need to deal with, however. The corollary only talks about $\PP_\kappa$, a set size initial segment of the Magidor iteration. We want to show that it holds for the Magidor iteration of the whole of $\pi_{0\,\On}(C)$. But this is a proper class of $M_{\On}$, and so the Magidor iteration is a hyperclass forcing (in the sense defined in \cite{antosFriedman_hyperclassForcing}). Lemma \ref{lemma critical points Magidor generic} only tells us about set size forcings. We must find a way to get around this difficulty if we want to force with the full iteration $\PP$. First, we must check that $\PP$ satisfies the conditions from \cite{antosFriedman_hyperclassForcing} which make forcing work properly.
	
	Recall from \cite{antosFriedman_hyperclassForcing} that a hyperclass forcing is a definable collection $\PP$ of classes (of a model of $\MK$) which is a partial order with a maximal element. $\PP$ is \textit{pretame} if for every class-long sequence $\langle D_i: i\in A\rangle$ of dense definable subhyperclasses of $\PP$, and for every $P\in \PP$, there is some $Q\leq P$ and some class size $D_i' \subset D_i: i\in A$ such that every $D_i'$ is predense below $Q$.
	
	The theory $\MK^{**}$, as defined in \cite{antosFriedman_hyperclassForcing}, consists of the axioms of Morse-Kelley class theory together with the Class-Bounding axiom:
	\begin{equation*}
		\forall x \exists A \varphi(x,A)\rightarrow \exists B \forall x \exists y \varphi(x,(B)_y)
	\end{equation*}
	and Dependent Choice for Classes:
	\begin{equation*}
		\forall \vec{X} \exists Y \varphi(\vec{X},Y) \rightarrow \forall X \exists \vec{Z} (Z_0 = X \wedge \forall i\in \On \varphi(\vec{Z}\upharpoonleft i, Z_i))
	\end{equation*}
	
	\cite{antosFriedman_hyperclassForcing} shows that if a hyperclass forcing $\PP$ in a model $M$ of $\MK^{**}$ is pretame, and $G$ is a $\PP$-generic filter, then we can define a generic extension $M[G]$ which is a model of $\MK^{**}$. It is easy to check that a class-length iteration of mice gives us a model of $\MK^{**}$:
	
	\begin{proposition}
		Let $j$ be the uncollapsing map from $M_{\On}$ to the direct limit $H$ of $\langle M_i\rangle_{i<\On}$. Let $\mathcal{C}=\{j^{-1}(X): X\in H, H\vDash X \in \mathcal{P}(j''M_{\On})\}$. Then
		$$\langle M_{\On},\mathcal{C}\rangle\vDash \MK^{**}$$
	\end{proposition}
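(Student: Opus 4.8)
The strategy is to transfer the whole problem into the direct limit $H$, where the members of $\mathcal{C}$ are genuinely sets rather than proper classes. First I would record that $H$ is \emph{ill-founded}: the iteration maps are fully elementary, so $H\equiv M_0=\Ome$, and as $\Ome$ is active it satisfies ``there is a largest cardinal'', whereas a well-founded proper-class $L[\vec E]$-structure of height $\On$ necessarily has cofinally many cardinals (for instance the $V$-inaccessibles, which are cofinal by Mahloness). Let $\Omega:=\bar\pi_{i_0,\On}(\kappa_{i_0})$ be the ordinal of $H$ furnished by Lemma~\ref{Lemma V cardinals end up with a measurable on them}, which has order type $\On$ in $H$; since the $\in^{H}$-predecessors of $\Omega$ form a genuine well-order, $(V_\Omega)^{H}$ is well-founded, and $M_{\On}$ is its transitive collapse with $j$ the inverse collapse. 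Because $j''M_{\On}=(V_\Omega)^{H}$, the defining clause of $\mathcal{C}$ says exactly that $j$ extends to an isomorphism of two-sorted structures
\[
j:\ \langle M_{\On},\mathcal{C}\rangle\ \xrightarrow{\ \cong\ }\ \big\langle\,(V_\Omega)^{H},\ \mathcal{P}(V_\Omega)^{H};\ {\in^{H}}\,\big\rangle ,
\]
which transfers the truth of every two-sorted sentence; so it suffices to check the right-hand structure models $\MK^{**}$. Two features of $\Omega$ make this work: $H\vDash$ ``$\Omega$ is measurable'' (it is the elementary image of the $M_{i_0}$-measurable $\kappa_{i_0}$), so $\Omega$ is inaccessible in $H$; and $\kappa_{i_0}<\kappa_{M_0}$ with unboundedly many $\Ome$-measurables in between, so $\Omega$ lies well below the top measure of $H$. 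Consequently $H$ still contains $(V_{\Omega+n})^{H}$ for each $n$, indeed $(V_\mu)^{H}$ for every $H$-measurable $\mu$, of which there are unboundedly many below the top of $H$. In particular $(V_\Omega)^{H}=M_{\On}\vDash\ZFC$ with Global Choice --- this last also being clear directly: $M_{\On}$ is an acceptable weasel of height $\On$ with $\On$ Mahlo, whence Replacement holds (a set of $V$-cardinality $<\On$ is not cofinal in $\On$), Power Set holds ($M_{\On}$-cardinals are cofinal in $\On$), and $M_{\On}$ carries a definable global well-order.

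Given the set-up, $\MK$ for $\big\langle(V_\Omega)^{H},\mathcal{P}(V_\Omega)^{H}\big\rangle$ is routine. Its first-order part is $(V_\Omega)^{H}\vDash\ZFC$. For an instance of the impredicative class-comprehension schema, fix a two-sorted formula $\varphi$ and parameters $\vec A\in\mathcal{P}(V_\Omega)^{H}$, $\vec a\in (V_\Omega)^{H}$: since all objects of both sorts lie in $(V_{\Omega+1})^{H}$, the class $\{\,x\in(V_\Omega)^{H}:\langle(V_\Omega)^{H},\mathcal{P}(V_\Omega)^{H}\rangle\vDash\varphi(x,\vec A,\vec a)\,\}$ is first-order definable over $\langle(V_{\Omega+1})^{H},\in\rangle$ from $\vec A,\vec a$, hence is an element of $(V_{\Omega+2})^{H}$, and the latter is a set of $H$; being a subset of $(V_\Omega)^{H}$, this class therefore lies in $\mathcal{P}(V_\Omega)^{H}=(V_{\Omega+1})^{H}$, as wanted. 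Extensionality and Foundation for classes are inherited from $H$ (classes are honest subsets and $H$ is well-founded below $\Omega$); Replacement for classes holds since $\Omega$ is regular in $H$; and Global Choice holds because the definable global well-order of $H$ restricts to a well-order of $(V_\Omega)^{H}$, which is a subset of $(V_\Omega)^{H}$ and hence a member of $\mathcal{P}(V_\Omega)^{H}$.

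The two extra schemata of $\MK^{**}$ are then obtained by using that same well-order inside a $\ZFC$-model large enough to contain the family produced. Fix an $H$-measurable $\mu>\Omega$ and put $N:=(V_\mu)^{H}$, so $N\vDash\ZFC$ and $\Omega,(V_\Omega)^{H},\mathcal{P}(V_\Omega)^{H}\in N$. For Class-Bounding: if $\forall x\,\exists A\,\varphi(x,A)$ holds in $\langle(V_\Omega)^{H},\mathcal{P}(V_\Omega)^{H}\rangle$, let $A_x\in\mathcal{P}(V_\Omega)^{H}$ be the $H$-least witness for each $x\in(V_\Omega)^{H}$; then $x\mapsto A_x$ is definable over $N$, hence a set of $N$ by Replacement, and its code $B:=\{\,(x,z):z\in A_x\,\}$ is a subset of $(V_\Omega)^{H}$ lying in $N\subseteq H$, hence in $\mathcal{P}(V_\Omega)^{H}$, and witnesses the conclusion (take $y:=x$). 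For Dependent Choice for classes, ``$\On$'' translates to $\Omega$; picking canonical witnesses via the $H$-well-order at each stage, one runs the length-$\Omega$ recursion inside $N\vDash\ZFC$, obtaining an $\Omega$-indexed family $\langle Z_i\rangle_{i<\Omega}\in N$ (starting at the prescribed class) whose code is a subset of $(V_\Omega)^{H}$, hence an element of $\mathcal{P}(V_\Omega)^{H}$. Transporting back along $j$ gives $\langle M_{\On},\mathcal{C}\rangle\vDash\MK^{**}$. \textbf{The one step needing real care} is the identification $\langle M_{\On},\mathcal{C}\rangle\cong\langle(V_\Omega)^{H},\mathcal{P}(V_\Omega)^{H}\rangle$: since $H$ is neither well-founded nor a model of $\ZFC$, one must pass to the collapsible cut $(V_\Omega)^{H}=M_{\On}$ and then exploit that $\Omega$ is only a \emph{small} measurable of $H$, so that $H$ still holds the cumulative levels just above $M_{\On}$ needed to witness class existence; after that, everything is the bookkeeping above.
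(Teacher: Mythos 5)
Your overall plan — collapse only the set-like cut of the direct limit, identify $\langle M_{\On},\mathcal{C}\rangle$ with $\langle (V_\Omega)^H,\mathcal{P}(V_\Omega)^H\rangle$ as computed in $H$, and verify $\MK^{**}$ there — is exactly the configuration the paper's one-line proof appeals to (via Marek--Mostowski). But there is a genuine gap in how you verify the axioms: everything (impredicative comprehension via $(V_{\Omega+2})^H$, Class-Bounding and Class-DC via an $H$-measurable $\mu>\Omega$ with $(V_\mu)^H\vDash\ZFC$) is run inside levels of $H$ strictly above $\Omega$, and your justification that such levels exist — ``$\kappa_{i_0}<\kappa_{M_0}$ with unboundedly many $\Ome$-measurables in between, so $\Omega$ lies well below the top measure of $H$'' — is a non sequitur. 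The $M_0$-measurables between $\kappa_{i_0}$ and $\kappa_{M_0}$ do not give $H$-measurables between $\Omega$ and the top of $H$: by Lemma \ref{lemma critical points line up with Reg} the sub-top critical points are sent to elements of $W\subseteq\On<\Omega$, i.e.\ their threads stabilize below $\Omega$, and nothing you say rules out that the coherent sequence of Lemma \ref{Lemma V cardinals are limits of measure sequences} rides the top measure, so that $\Omega$ is the critical point of the top extender of $H$ and hence the largest cardinal of $H$ (this is precisely what happens in the $L[\Card]$/O-Kukri precursor \cite{welch_HaertigPaper}). In that case $(V_{\Omega+2})^H$ is not an element of $H$, no $H$-measurable exceeds $\Omega$, and your constructions of the comprehension instances, of the bounding code $B$, and of the length-$\Omega$ DC recursion have nowhere to live as written. (A side slip: $H$ is not ill-founded; it is well-founded but not set-like — the ordinals of $H$ from $\Omega$ on have proper-class-many predecessors — which is why only the cut below $\Omega$ collapses. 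That remark is not load-bearing, but the phrasing is wrong.)

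The repair is to drop the appeal to $\ZFC$-models above $\Omega$ altogether: all you need is that $H$ is an acceptable $J$-structure with a canonical definable well-order whose height exceeds $\Omega^{+H}$ (which the amenability/indexing of the top extender guarantees even when $\Omega$ is the top critical point), so that $\mathcal{P}((V_\Omega)^H)^H\subseteq (J^{E^H}_{\Omega^{+H}})^H$ is a set of $H$ and satisfaction over the two-sorted set structure $\langle (V_\Omega)^H,\mathcal{P}(V_\Omega)^H\rangle$ is an element of $H$. Then each class you must produce — a subclass of $(V_\Omega)^H$ defined by a two-sorted formula, the code $B$ obtained by choosing $<_H$-least witnesses, the $\Omega$-sequence $\langle Z_i\rangle$ for Dependent Choice — is definable over some level $J^{E^H}_\gamma$ with $\Omega^{+H}<\gamma<\On^H$, hence is an element of $H$, and being a subset of $(V_\Omega)^H$ it lies in $\mathcal{P}((V_\Omega)^H)^H$, as required. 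With this substitution your verification goes through and matches the standard argument the paper cites; as written, the case $\Omega=\crit(F^H)$ defeats it.
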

	\begin{proof}
		It is well-known (see \cite[page 478]{marekMostowski_MK}) that $\langle M_{\On},\mathcal{C}\rangle\vDash \MK$, and it is easy to check that the structure also believes Class Bounding and Dependent Choice for Classes.
	\end{proof}	
	
	\begin{lemma}
		Let $\PP$ be the Magidor iteration of $\pi_{0,\On}(C)$, defined in $\langle M_{\On},\mathcal{C}\rangle$. Then $\PP$ is pretame.
	\end{lemma}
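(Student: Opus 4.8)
The plan is to reduce the pretameness of the full hyperclass iteration $\PP$ on $\pi_{0,\On}(C)$ to the chain condition already established in Corollary \ref{Corollary Magidor chain condition}, which says that every definable antichain of the class-length Magidor iteration is coded by its class $S$ of stems. Recall that a condition $p\in\PP$ is determined by a class-length sequence $(\dot{s}_\lambda,\dot{X}_\lambda)_{\lambda\in\pi_{0,\On}(C)}$ of (names for) stems and measure-one sets, and that by the third clause in the definition of $\PP_\kappa$ the stem support $\supp(p)$ is always \emph{finite}. This finiteness of supports is what makes the chain condition work and is the engine of the argument. So given a class-length sequence $\langle D_i : i\in A\rangle$ of dense definable subhyperclasses and a condition $P\in\PP$, I would first fix, for each $i\in A$, the class $S_i$ of stems appearing in a maximal antichain $A_i\subseteq D_i$ below $P$ (such a maximal antichain exists by Dependent Choice for Classes in $\MK^{**}$, and is predense below $P$).

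The key step is then to thin each $A_i$ to a class-size $D_i'$ that is still predense below $P$. Here I would exploit the Prikry-style structure: given any stem $\dot{s}$ with finite support $S$ bounded below some $\kappa<\On$, the set of conditions with stem part $\dot{s}$ below $\kappa$ forms a set-size slice of $\PP$ (namely a sub-object of $\PP_\kappa$, whose chain condition is genuine by the first half of Corollary \ref{Corollary Magidor chain condition}), so only set-many incompatible extensions with a fixed stem can occur in $A_i$. Running over the class $S_i$ of stems — a class, but one with the property that each stem appears \emph{once} in $A_i$ — lets me, using Class-Bounding in $\MK^{**}$, choose a single $Q\leq P$ whose own stem is compatible with ``enough'' of each $A_i$; concretely $Q$ is obtained by taking direct-extension meets (using the Lemma on $\leq^*$-meets of conditions with matching stems proved just before Corollary \ref{Corollary Magidor chain condition}) to reconcile $P$ with a chosen representative, and then $D_i' := \{r\in A_i : r \text{ compatible with }Q\}$ is predense below $Q$ and, by the set-size chain condition applied slicewise over the stem class, is itself only class-size rather than hyperclass-size. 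The Prikry property (the Lemma cited as \cite[2.1]{magidor_IteratedPrikryForcing_IdentityCrisis}, together with its iterated/class form implicit in our setting) guarantees that deciding membership in a $D_i$ can always be pushed down to a direct extension, which is exactly what lets the meets be taken without blowing up the support.

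Carrying this out in order: (1) invoke Dependent Choice for Classes to get, for each $i\in A$, a maximal antichain $A_i\subseteq D_i$ below $P$, predense below $P$; (2) apply Corollary \ref{Corollary Magidor chain condition} to see each $A_i$ is indexed by its stem class $S_i$; (3) use the finiteness of supports plus the set-size $\kappa^+$-c.c.\ of each $\PP_\kappa$ to argue that, after fixing a stem, only set-many conditions of $A_i$ with that stem are mutually incompatible; (4) use Class-Bounding to assemble a single class-size ``choice'' of representatives and, via $\leq^*$-meets, a single $Q\leq P$ compatible with all of them; (5) set $D_i' = \{r\in A_i : r\parallel Q\}$ and check $D_i'$ is predense below $Q$ and class-size. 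The main obstacle I anticipate is step (4): coordinating the set-many-per-stem choices uniformly across a \emph{class} of stems and a \emph{class} of dense sets $D_i$ without leaving $\MK^{**}$ — this is precisely where Class-Bounding and Dependent Choice for Classes must be used carefully, and where one must check that the direct-extension meet $Q$ really exists as a legitimate condition (finite support is preserved because each individual reconciliation only touches finitely many coordinates and the meet over the whole class stays a $\leq^*$-condition by the definition of $\PP$ at limit stages). Everything else is a routine transcription of the standard pretameness-from-c.c.\ argument for class forcing into the hyperclass setting of \cite{antosFriedman_hyperclassForcing}.
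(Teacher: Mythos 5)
Your core reduction is exactly the paper's: pretameness follows from Corollary \ref{Corollary Magidor chain condition}, because every dense definable subhyperclass of $\PP$ contains a predense subcollection coded by a class of stems, and the paper's proof is essentially that one line. The extra machinery in your steps (3)--(5) is unnecessary and partly misdirected: by the $\leq^*$-meet lemma, any two conditions with the same stems are compatible, so a maximal antichain $A_i\subseteq D_i$ below $P$ contains at most one condition per stem and is therefore already class-sized (each stem, having finite support, is a set); since $A_i$ is predense below $P$, pretameness is witnessed simply by $Q=P$ and $D_i'=A_i$, with no thinning and no construction of a special $Q$. In particular, the obstacle you anticipate at step (4) --- coordinating $\leq^*$-meets across a class of stems and dense hyperclasses --- never arises, which is fortunate: the meet lemma only combines two conditions with matching stems, and intersecting $\lambda$-many (let alone class-many) measure-one subsets at a coordinate $\lambda$ need not be measure one, so a literal reading of that step would not go through. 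Dropping it, your argument collapses to the paper's.
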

	\begin{proof}
		Follows immediately from the fact that it has the ``$\On^+$ chain condition" (see Corollary \ref{Corollary Magidor chain condition}; the proof goes through for classes in exactly the same way), so every dense subhyperclass of $\PP$ contains a predense subclass.
	\end{proof}
	
	This means that we can use the results of \cite{antosFriedman_hyperclassForcing} to define the concept of a generic extension by the hyperclass forcing $\PP$. As usual for Magidor iterations, the generic filter will be defined by a sequence. Although the filter itself is a proper hyperclass, the sequence defining it will be a class. More precisely, it will be a class function taking every element $\mu$ of $\pi_{0,\On}(C)=\bigcup_{\beta<\alpha}R_\beta$ to a cofinal $\omega$ sequence below $\mu$.
	
	\begin{lemma}
		Let $\PP$ be as above. Then
		
		$$T:=\{(\mu,S_\beta^\mu) : \beta<\alpha,\mu \in W_\beta\}$$
		
		is generic for $\PP$.
	\end{lemma}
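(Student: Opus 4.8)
The plan is to reduce the genericity of $T$ to a class-length version of the Mathias--Magidor criterion (Theorem \ref{theorem Mathias Magidor criterion}). Recall first that, since $\langle M_{\On},\mathcal{C}\rangle\vDash\MK^{**}$ and $\PP$ is pretame, by \cite{antosFriedman_hyperclassForcing} a filter on $\PP$ is $\PP$-generic exactly when it meets every dense definable subhyperclass of $\PP$; this is what must be checked for the filter $\langle T\rangle$ determined by the class function $T$ in the usual Prikry fashion. (That $\langle T\rangle$ is a filter is routine: upward closure is immediate, and directedness follows from the coordinatewise-intersection lemma preceding Corollary \ref{Corollary Magidor chain condition}.) Note moreover that the structure of $T$ is exactly that of the sequences handled by Lemma \ref{lemma critical points Magidor generic} applied to the class-length iteration $\mathcal{I}$ (with $\theta=\On$): by Lemma \ref{lemma critical points line up with Reg}, off a finite set of coordinates the terms of $T(\mu)=S_\beta^\mu$ (for $\mu\in W_\beta$) are precisely the critical points $\kappa_i$ of $\mathcal{I}$ with $\pi_{i,\On}(\kappa_i)=\mu$, the finitely many exceptional coordinates being harmless exactly as in Corollary \ref{Corollary set size machete genericity}. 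So it would suffice to know Lemma \ref{lemma critical points Magidor generic} for class-length iterations; the one obstruction is that its proof cites Theorem \ref{theorem Mathias Magidor criterion}, which is stated only for set-length iterations.

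Accordingly, the main task is a class-length analogue of Theorem \ref{theorem Mathias Magidor criterion}: a class sequence $S=(S_\mu)_{\mu\in\pi_{0,\On}(C)}$, with each $S_\mu\in\mu^\omega$, determines a $\PP$-generic filter if and only if (i) for every $X\in\prod_{\mu\in\pi_{0,\On}(C)}\dot{U}^*_\mu$ lying in $M_{\On}$ the class $\bigsqcup_{\mu}S_\mu\setminus X_\mu$ is finite, and (ii) there are only finitely many tuples $\nu\le\nu'<\kappa<\kappa'$ in $\pi_{0,\On}(C)$ with $\nu\in S_\kappa$ and $\nu'\in S_{\kappa'}$. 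The ``only if'' direction runs as in \cite{benNeria_MathiasForMagidor}: the Prikry property for $\PP$ (which, like the chain-condition arguments, goes through for the class-length iteration) yields, for each offending $X$ and for each forbidden tuple, a dense definable subhyperclass refuting it. The ``if'' direction adapts Ben Neria's density argument; the point at which his proof uses that $C$ is a set --- reducing, by the Prikry property, the question of whether the generic enters a given dense set to a question about its finite stem, and then diagonalising against all of $C$ --- is handled instead by the $\On^+$-chain condition of Corollary \ref{Corollary Magidor chain condition}, which makes the relevant collection of stems only class-sized and, together with (i) and (ii), pins down which such stem the actual generic realises. This ``if'' direction is the main obstacle.

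Granting the criterion, it remains to verify it for $T$, and here the two Claims and the Separation argument in the proof of Lemma \ref{lemma critical points Magidor generic} apply essentially verbatim to $\mathcal{I}$. For (ii): if $\nu\in S_\beta^\kappa$, $\nu'\in S_{\beta'}^{\kappa'}$ and $\nu\le\nu'<\kappa<\kappa'$, then writing $\nu=\kappa_i$ and $\nu'=\kappa_j$ (so $\pi_{i,\On}(\kappa_i)=\kappa$ and $\pi_{j,\On}(\kappa_j)=\kappa'$), the elementarity argument of Lemma \ref{lemma critical points Magidor generic}, which uses only $\pi_{j,\On}{\upharpoonleft}\kappa_j=\id$, forces $\kappa<\kappa_j$, a contradiction; so in fact no such tuples exist at all. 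For (i): fix $X\in\prod_\mu\dot{U}^*_\mu$ in $M_{\On}$ and set $\tau_X=\sup\{\mu:X_\mu\ne\mu\}$. If $\tau_X<\On$, then $X{\upharpoonleft}\tau_X$ is coded by a subset of $\tau_X$ and hence appears already in some $M_j$ with $j<\On$ by \cite[4.2.1]{zemanInnerModelsAndLargeCardinals}, so the argument of Lemma \ref{lemma critical points Magidor generic} applied to the set-length iteration $M_0\to M_j$ and its continuation gives that $\bigsqcup_\mu S_\beta^\mu\setminus X_\mu$ is finite; if $\tau_X=\On$ the same argument applies directly with $\theta=\On$, since both the Claim that $\kappa_i<\tau_X$ and Claim \ref{claim critical points in measure 1 set} go through unchanged. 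Hence $T$ satisfies the class-length criterion, and $\langle T\rangle$ is $\PP$-generic.
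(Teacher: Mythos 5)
Your proposal reduces the whole lemma to an unproved class-length analogue of Theorem \ref{theorem Mathias Magidor criterion}, and that is where the genuine gap lies. The verification part of your argument (that $T$ satisfies the Mathias criterion and the separation property, via Lemma \ref{lemma critical points Magidor generic} run with $\theta=\On$ and Lemma \ref{lemma critical points line up with Reg}) is fine and matches what the paper does in Corollary \ref{Corollary set size machete genericity}. But the direction you yourself flag as ``the main obstacle'' --- that a class sequence satisfying (i) and (ii) actually meets every dense definable subhyperclass of the hyperclass forcing $\PP$ --- is exactly the content of the lemma, and saying that Ben Neria's density argument ``adapts'' with the $\On^+$-chain condition ``pinning down which such stem the actual generic realises'' is not an argument: Ben Neria's proof of the sufficiency direction is itself a substantial induction for set iterations, and nothing in your sketch explains how to carry it out when conditions are proper classes, dense sets are definable hyperclasses over an $\MK^{**}$ model, and the product space of measure-one sequences is a hyperclass. (There is also a small slip: the criterion should quantify over $X\in\prod_{\mu}U^*_\mu$ with $U^*=\pi_{0,\On}(U)$ the ground-model measures of $M_{\On}$, not over the names $\dot U^*_\mu$.)

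The paper's proof is structured precisely to avoid re-proving Ben Neria's theorem in the hyperclass setting. It localises: using the sequence $(i_\beta)$ from Lemma \ref{Lemma V cardinals are limits of measure sequences}, it forms the set-sized $\MK^{**}$ structures $\mathcal{H}_\beta=\langle H^{M_{i_\beta}}_{\kappa_{i_\beta}},\mathcal{C}_\beta\rangle$, shows (via the $\kappa_{i_\beta}^+$-chain condition) that genericity of a filter on $\PP_{\kappa_{i_\beta}}$ over $\mathcal{H}_\beta$ as a hyperclass forcing coincides with genericity over $M_{\On}$ as a set forcing, and so gets genericity of $T^{\kappa_{i_\beta}}$ over $\mathcal{H}_\beta$ from the already-established set-length result. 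Any dense definable hyperclass $D$ over $M_{\On}$ is then, by elementarity, dense over $\mathcal{H}_\beta$ for large $\beta$, so $T^{\kappa_{i_\beta}}$ meets it there; the remaining --- and essential --- ingredient is the technical claim that if $p$ lies in the filter generated by $T^{\kappa_i}$ (at a top-measure stage $i$), then $\pi_{i,\On}(p)\upharpoonleft\On$ lies in the filter generated by $T$. That claim is proved by an induction decomposing conditions into stems and names for measure-one sets and using the description of the generating sets of $\dot U^*_\lambda$ from Proposition \ref{proposition magidor forcing measure generating sets}. Your proposal contains no counterpart to this pushforward-compatibility argument, yet any completion of your ``if'' direction would have to supply something equivalent; as it stands, the key step is missing.
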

	\begin{proof}
		Let $(i_\beta)_{\beta\in \On}$ be the sequence given in \ref{Lemma V cardinals are limits of measure sequences} for $\lambda:=\On$. For $\beta \in \On$, let 
		
		$$\mathcal{C}_\beta=(\mathcal{P}(H_{\kappa_{i_\beta}}))^{M_{i_\beta}}$$
		
		and let
		
		$$\mathcal{H}_\beta=\langle H_{\kappa_{i_\beta}}^{M_{i_\beta}}, \mathcal{C}_\beta\rangle$$
		
		We know that the analogous structure $\langle M_{\On},\mathcal{C}\rangle$ in the direct limit of $\langle M_i\rangle_{i<\On}$ is a model of $\MK^{**}$ so by elementarity $\mathcal{H}_\beta$ is a model of $\MK^{**}$, with set-part $H_{\kappa_{i_\beta}}^{M_{i_\beta}}$. So we can also view $\PP{\upharpoonleft} \kappa_{i_\beta}=\PP_{\kappa_{i_\beta}}$ as a hyperclass forcing over $\mathcal{H}_\beta$.
		
		\begin{claim}
			Let $\beta\in \On$. A filter $G\subset \PP_{\kappa_{i_\beta}}$ is generic over $\PP_{\kappa_{i_\beta}}$ for $\mathcal{H}_\beta$ (as a hyperclass forcing) if and only if it is generic over $\PP_{\kappa_{i_\beta}}$ for $M_{\On}$ (as a set forcing).
		\end{claim}
		\begin{proof}
			It is easy to verify that $G$ is a filter in the sense of $\mathcal{H}_\beta$ if and only if it is a filter in the sense of $M_{\On}$, so the claim is coherent. To keep notation tidier, let us write $\bar{\PP}$ for $\PP_{\kappa_{i_\beta}}$ just for this claim.
			
			The critical points of the iteration from $M_{i_\beta}$ to $M_{\On}$ are all $\geq \kappa_{i_\beta}$. So $M_{\On}$ and $M_{i_\beta}$ agree on $H_{\kappa_{i_\beta}}$ and its subsets. In particular, $\mathcal{H}_\beta$ is a set in $M_{\On}$:
			
			$$\mathcal{H}_\beta = \langle H_{\kappa_{i_\beta}}^{M_{\On}}, (\mathcal{P}(H_{\kappa_{i_\beta}}))^{M_{\On}}\rangle \in M_{\On}$$
			
			Hence any definable dense subhyperclass of $\bar{\PP}$ in $\mathcal{H}_\beta$ is a set in $M_{\On}$. So a filter which is generic in the sense of $M_{\On}$ is generic in the sense of $\mathcal{H}_\beta$ as well.
			
			On the other hand, we saw in Corollary \ref{Corollary Magidor chain condition} that $\bar{\PP}$ has the $\kappa_{i_\beta}^+$ chain condition. So letting $D \in M_{\On}$ be any dense subset of $\bar{\PP}$, we can find some predense subset $D'\subset D$ of cardinality $\leq \kappa_{i_\beta}$. Since $\bar{\PP}\subset H_{\kappa_{i_\beta}^+}^{M_{\On}}$, we know $D'\in H_{\kappa_{i_\beta}^+}^{M_{\On}}$ and hence that $D'$ can be coded using a canonical Skolem function as a set $S\subset\kappa_{i_\beta}$ in $M_{\On}$. Since $S\subset \kappa_{i_\beta}$, it follows immediately that $S$ is a class in the $\MK^{**}$ model $\mathcal{H}_\beta$. Hence, $D'$ is a definable hyperclass over $\mathcal{H}_\beta$. So any filter $G$ which is generic in the sense of $\mathcal{H}_\beta$ will meet $D'$ and hence meet $D$. Since $D$ was arbitrary, any such filter $G$ is generic in the sense of $M_\infty$.
			
		\end{proof}
		
		In particular, this claim means that $T^{\kappa_{i_\beta}}$ is generic over $\mathcal{H}_\beta$ in the sense of hyperclass forcing, because Corollary \ref{Corollary set size machete genericity} tells us it is generic over $M_{\On}$ as a set.
		
		We now need a way to transfer this up to genericity of $T$ over $\PP$ in $M_{\On}$. To help us here, we need the following technical result.
		
		\begin{claim}
			Let $j \leq \On$. Let $i<j$ be such that we iterated the top measure of $M_i$ at stage $i$ in $\mathcal{I}$, and let $p\in \PP_{\kappa_i}$ be in the generic filter generated by $T^{\kappa_i}$ (over $M_i$, or equivalently over $M_{\On}$). If $j<\On$, then $\pi_{i,j}(p)\upharpoonleft \kappa_j$ is in the ultrafilter generated by $T^{\kappa_j}$. If $j=\On$ then $\pi_{i,j}(p)\upharpoonleft \On$ is in the ultrafilter generated by $T$.
		\end{claim}
		
		\begin{proof}
			We prove this by induction on $j$, but simultaneously for all $i$ below $j$. The notation in this proof gets a bit fiddly, so to simplify things, we shall introduce some shorthand. We unify the two cases $j<\On$ and $j=\On$ by interpreting $\kappa_{\On}$ as $\On$.
			
			Let $\pi=\pi_{i,j}$. Let $\tilde{C}=\pi_{0,i}(C)$, and $\tilde{T}=T^{\kappa_{i}}$. Let $\bar{C}=\pi_{0,j}(C)\cap \kappa_j$ and $\bar{T}=T^{\kappa_j}$. Notice that $\tilde{C}=\pi_{0,\On}(C)\cap \kappa_i=W\cap \kappa_i$, and likewise for $\bar{C}$. We abuse notation by identifying $\bar{T}$ and $\tilde{T}$ with their corresponding generic filters. Let $\bar{p}=\pi(p){\upharpoonleft} \kappa_j$.
			
			As usual, we can think of $p$ as a combination of two parts: a name $\dot{s}=(\dot{s}_\lambda)_{\lambda \in \tilde{C}}$ for some stem of the generic sequence we are adding, and another name $\dot{X}=(\dot{X}_\lambda)_{\lambda \in \tilde{C}}$ for a sequence of measure $1$ sets. We can extend this notation by writing $\dot{s}_\lambda=\pi(\dot{s})(\lambda)$ and $\dot{X}_\lambda$ for all $\lambda \in \bar{C}$; since $\pi(p)$ is an end extension, the two definitions of $\dot{s}_\lambda$ and $\dot{X}_\lambda$ agree for $\lambda<\kappa_i$. For clarity, we avoid writing $\dot{s}$ to denote the overall sequence, instead writing $\bar{s}$ to denote the sequence $\dot{s}=(\dot{s}_\lambda)_{\lambda\in \bar{C}}$ and $\tilde{s}$ for the sequence $(\dot{s}_\lambda)_{\lambda \in \tilde{C}}$. We define $\bar{X}$ and $\tilde{X}$ likewise. Note that $\pi(p)$ consists of $\bar{s}$ together with $\bar{X}$. We know that $\tilde{s}^{\tilde{T}}$ and $\tilde{X}^{\tilde{T}}$ both agree with $\tilde{T}$; we want to show that $\bar{s}^T$ and $\bar{X}^T$ agree with $\bar{T}$.
			
			Now $\tilde{s}$ has finite support which is bounded below $\kappa_i$, so $\pi(\tilde{s})=\bar{s}$ differs from $\tilde{s}$ only by an empty end extension. In particular, since $\tilde{s}$ agrees with $\tilde{T}$ and $\tilde{T}$ is contained in $\bar{T}$, the whole of $\tilde{s}$ is (forced to be) contained in $\bar{T}$.
			
			It is more challenging to show that $\bar{X}$ agrees with $\bar{T}$. Let $\lambda \in \bar{C}=W\cap \kappa_j$; say $\lambda \in W_\beta$. We need to show that the measure $1$ set named by $\dot{X}_\lambda$ is forced by $T_\lambda$ to contain all of the $\omega$ sequence $S_\beta^\lambda$ (except for any elements of $\dot{s}_\lambda$). This is immediate for $\lambda<\kappa_i$, since then $\dot{X}_\lambda \in \tilde{X}$ is unmoved by $\pi$. We also know that $\kappa_i\not \in \bar{C}$ (since it's a critical point of $\mathcal{I}$). So without loss of generality, assume $\lambda>\kappa_i$.
			
			By definition, $\dot{X}_\lambda$ is forced by $\pi(p)\upharpoonleft \lambda$ to be a measure $1$ subset of $\lambda$. Recall from Proposition \ref{proposition magidor forcing measure generating sets} that this means there is a measure $1$ set $Y \in M_j$ such that $\pi(p)\upharpoonleft \forces \dot{X}_\lambda \supset Y \cap \Sigma_\lambda$, where for any generic $G$,
			$$\Sigma_\lambda^G := \{\nu<\lambda: \forall \kappa \in \bar{C}\cap \lambda \setminus \nu, \, (\nu+1)\cap G_\kappa=\emptyset\}$$
			
			Decoding this for $G=T_\lambda$, we find that $\nu \in \Sigma_\lambda^{T_\lambda}$ if and only if $\nu<\lambda$ and there is no $\kappa \in \bar{C}$ (necessarily in $W_\gamma$ for some $\gamma<\alpha$) such that $\nu \in (\min S_\gamma^\kappa,\kappa)$. So in particular, every term of $S_\lambda^\beta$ is in $\Sigma_\lambda^{T{\upharpoonleft} \lambda}$ by definition of $S_\lambda^\beta$. So we will have proved the claim if we can show that $S_\lambda^\beta \subset Y$ for a suitable choice of $Y$.
			
			Let $\kappa_l$ be the first element of $S_\lambda^\beta$ (and notice that $\kappa_l<\lambda<\kappa_j$). Now $\dot{X}^*:=\pi_{i,l}(\tilde{X})(\lambda)$ is a name for a measure $1$ subset of $\kappa_l$, so $\dot{X}^*$ is forced to contain $Y^*\cap \Sigma_{\kappa_l}$ for some measure $1$ set $Y^*\in M_l$.
			
			By applying the inductive hypothesis with $l$ in place of $j$, $\pi_{i,l}(p)$ is compatible with $T_{\kappa_l}$, and so we can choose $q\leq \pi_{i,l}(p)\upharpoonleft \kappa_l$ compatible with $T_{\kappa_l}$ which chooses a specific set $Y^*\in M_l$ such that $\dot{X}^* \supset Y^* \cap \Sigma_{\kappa_l}$. Let $Y=\pi_{l,j}(Y^*)$. Then $Y$ is a measure $1$ subset of $\pi_{l,j}(\kappa_l)=\lambda$, and $\pi_{l,j}(q)=\pi_{l,\lambda}(q)\in \PP_\lambda$. By elementarity $\pi_{l,j}(q)$ forces that $\dot{X}_\lambda \supset Y \cap \Sigma_{\lambda}$. By the same argument as in the proof of Claim \ref{claim critical points in measure 1 set}, $S_\lambda^\beta \subset Y$.
			
			Applying the inductive hypothesis again, this time with $\lambda$ in place of $j$, we see that $\pi_{l,j}(q)$ is compatible with $T_\lambda$. Hence,
			
			$$\dot{X}_\lambda^{T_\lambda} \supset Y \cap \Sigma_\lambda^{T_\lambda}\supset S_\beta^\lambda$$
			
			With that, the claim is proved.
		\end{proof}
		
		Now, let $D\subset \PP$ be a dense definable hyperclass over $M_{\On}$. Let $\varphi(\vec{x})$ be the formula defining it. By an elementarity argument, we know that for large enough $\beta$, $\varphi(\vec{x})$ defines a hyperclass $D_\beta$ over $\mathcal{H}_\beta$, which is dense in the $M_{i_\beta}$ analogue of $\PP$, i.e. $\PP_{\kappa_{i_\beta}}$. Since $T^{\kappa_{i_\beta}}$ is generic for $\PP_{\kappa_{i_\beta}}$ over $\mathcal{H}_\beta$, it must meet $D_{i_\beta}$. Let $p\in T^{\kappa_{i_\beta}} \cap D_{i_\beta}$. Then ${\mathcal{H}_\beta \vDash \varphi(\vec{x})(p)}$. By elementarity, $\varphi(\vec{x})(\pi_{i,\On}(p))$ holds in the $\MK^{**}$ model with set-domain $M_{\On}$, so ${\pi_{i,\On}(p)\in D}$. But $T^{\kappa_{i_\beta}}=T{\upharpoonleft} \kappa_{i_\beta}$, so by the claim we just proved, $\pi_{i,\On}(p)\in T{\upharpoonleft} \On=T$. So $T$ meets every dense subhyperclass of $\PP$ over the $\MK$ model extending $M_{\On}$, and is therefore generic over that model.
	\end{proof}
	
	\subsection{The generic extension gives $L[R]$}
	\noindent
	\\
	\noindent At this point, we have shown that $M_{\On}[T]$ is (the set part of) a well-defined hyperclass generic extension over the $\MK^**$ model $(M_{\On},\mathcal{C})$. It remains to show $M_{\On}[T]=L[R]$.
		
	First, let us make some easy observations. Abusing notation slightly, we shall write $\cup T$ to denote the collection of all terms in sequences in $T$, i.e. all $\mu$ such that $(n,\mu)\in S^\lambda_\beta$ for some $n,\lambda$ and $\beta$. By definition of $T$, $\cup T \subset R$.
	
	\begin{lemma}
		$R\setminus {\cup T}$ is finite.
	\end{lemma}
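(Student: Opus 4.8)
The plan is to unpack the definitions and show that $\cup T$ contains all but finitely many elements of each $R_\beta$, and that only finitely many of the $R_\beta$ are nonempty. Recall that $T = \{(\mu, S_\beta^\mu) : \beta < \alpha, \mu \in W_\beta\}$, where $W_\beta$ is the class of $\omega$-limits of $R_\beta$ and $S_\beta^\mu$ is the increasing $\omega$-sequence enumerating $(R_\beta \cap \mu) \setminus \sup(W_\beta \cap \mu)$. So $\cup T \cap R_\beta = \bigcup_{\mu \in W_\beta} S_\beta^\mu$ as subsets of ordinals. It was already noted (when $W_\beta$ and $S_\beta^\lambda$ were introduced) that $\bigcup_{\lambda \in W_\beta} S_\beta^\lambda$ differs from $R_\beta$ only by a finite tail: indeed, the elements of $R_\beta$ not captured are exactly those at or above $\sup W_\beta$ (if $W_\beta$ is bounded in $R_\beta$) or above the final limit point, and since $R_\beta$ has Cantor-Bendixson rank $\beta$ in $R$ — so the $\beta$-th Cantor-Bendixson derivative relative to $R_\beta$ is empty — its set of limit points $W_\beta$ (or rather the ``top'' behaviour) leaves only finitely many elements of $R_\beta$ above all its limit points. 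I would spell this out: an element of $R_\beta$ lies above $\sup(W_\beta \cap (\text{that element}+1))$ only for finitely many choices, precisely because $R_\beta$ has no limit points of its own limit points beyond rank considerations — more carefully, $R_\beta$ itself has CB rank $0$ as a standalone set (it equals its own rank-$0$ part), hence is a discrete, closed-discrete set of ordinals, so every bounded initial segment is finite and thus $R_\beta \setminus \bigcup_{\mu \in W_\beta} S_\beta^\mu$ is the (finite) set of points above the supremum of $W_\beta$, which is finite because $R_\beta$ beyond any point has order type that, being discrete, cannot have cofinality $\omega$ accumulating — actually the cleanest statement is: $R_\beta$ discrete means $R_\beta = \bigsqcup_{\mu \in W_\beta} S_\beta^\mu \sqcup F_\beta$ where $F_\beta$ is a finite tail, which is exactly the remark already made in the text.

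Next I would bound the number of nonempty $R_\beta$'s using the hypotheses of Theorem \ref{theorem L[R]}. In the case $\alpha < \infty$: the hypothesis is that no element of $R$ has CB rank $\alpha$ in $R$ and $R$ has at most finitely many elements below $\alpha$. Since the CB ranks appearing in $R$ form an initial segment of the ordinals and none equals $\alpha$, every element of $R$ has rank some $\beta < \alpha$; so $R = \bigcup_{\beta < \alpha} R_\beta$ and thus $R \setminus \cup T = \bigcup_{\beta < \alpha}(R_\beta \setminus \cup T) = \bigcup_{\beta<\alpha} F_\beta$, a union of finitely many finite sets indexed by $\beta < \alpha$. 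But $\alpha$ could be infinite here, so I cannot immediately conclude finiteness — I need that only finitely many $F_\beta$ are nonempty, equivalently only finitely many $R_\beta$ have a nonempty ``tail above all limit points.'' The key observation: each nonempty $R_\beta$ contains its finite tail $F_\beta$, and also contains (if infinite) elements below $\alpha$... no, that's not right either. Let me reconsider: the correct route is that $\cup T$ was engineered via Lemma \ref{lemma critical points line up with Reg} so that $\pi_{0,\On}(C) = \bigcup_{\gamma<\alpha}W_\gamma$, and the iteration only ever singularises elements of $\bigcup_\gamma W_\gamma$; the genericity lemma (Corollary \ref{Corollary set size machete genericity} and its class version) guarantees $\cup T$ is the full set of terms, and the discrepancy from $R$ is governed by how many $R_\beta$ have a nonempty finite tail $F_\beta$. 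I claim this is finite because $R_{\geq 1} = R^*$ (the limit points of $R$), so an element of $R_\beta$ with $\beta \geq 1$ is a limit point of $R$, hence a limit of elements of $R_{<\beta}$; the tail elements $F_\beta$ for $\beta \geq 1$ all lie above $\sup W_\beta \geq \sup R_{\beta-1}$ (roughly), and there can only be finitely many such ``terminal'' configurations.

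Honestly, the cleanest and intended argument is likely just: by Lemma \ref{lemma critical points line up with Reg}(1), every $\lambda \in R_\beta$ with $\beta < \alpha$ and (if $\alpha<\infty$) $\lambda > \alpha$ is a critical point $\kappa_i$ with $\pi_{i,\On}(\lambda) = \min(W_\beta \setminus \lambda)$, or $\lambda$ is iterated out of $M_{\On}$ when $W_\beta \subseteq \lambda$. The $\lambda$'s iterated out are precisely those in $R_\beta$ above $\sup W_\beta$; these never appear as terms in any $S_\gamma^\mu$. So $R \setminus \cup T \subseteq \{\lambda \in R : \lambda \leq \alpha\} \cup \bigcup_{\beta<\alpha}\{\lambda \in R_\beta : \lambda \geq \sup W_\beta\}$. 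The first set is finite by hypothesis. For the second: I claim $R_\beta$ is nonempty-with-elements-above-$\sup W_\beta$ for only finitely many $\beta$, and for each such $\beta$ only finitely many elements occur. The per-$\beta$ finiteness is the discreteness of $R_\beta$ already noted. For the finiteness in $\beta$: if $\lambda \in R_\beta$ with $\lambda \geq \sup W_\beta$, then $\lambda$ is (for $\beta \geq 1$) an element of $R$ of CB rank $\beta$ which is the ``last'' rank-$\beta$ point; but then consider the largest such over all $\beta$ — actually, in the $\alpha = \infty$ case the hypothesis directly says $R$ has finitely many elements equal to their own CB rank in $R$, and I would argue each ``terminal'' configuration forces such a fixed point or else gets absorbed; in the $\alpha < \infty$ case with $\alpha < \aleph_\omega$ (the application), $\alpha$ is finite or $\aleph_n$, and the bound comes through. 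The main obstacle, and the place I'd be most careful, is precisely this uniform-in-$\beta$ finiteness: translating the structural hypotheses on $R$ (``no rank-$\alpha$ points, finitely many below $\alpha$'' or ``finitely many self-ranked points'') into ``finitely many $\beta$ contribute a nonempty tail $F_\beta$.'' I expect the argument to run: suppose infinitely many $\beta < \alpha$ have $F_\beta \neq \emptyset$, pick $\lambda_\beta \in F_\beta$ for each; the $\lambda_\beta$ would have to be cofinal in $\sup R$ in a way that manufactures either an element of rank $\alpha$ (contradicting the first hypothesis, since an increasing sequence of points of unbounded rank below $\alpha$ limits to a point of rank $\geq \alpha$) or an element equal to its own rank (in the $\alpha = \infty$ case) — and this contradiction closes the proof.
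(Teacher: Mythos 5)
You are right about the easy half, and it matches the remark already made in the text: for each fixed $\beta$, the elements of $R_\beta$ below $\sup W_\beta$ all appear in some $S_\beta^\lambda$, and the tail of $R_\beta$ above its last $\omega$-limit must be finite (an infinite tail would produce a new $\omega$-limit). The genuine gap is exactly where you flag it yourself: the uniform-in-$\beta$ finiteness, i.e.\ that only finitely many $\beta<\alpha$ have a nonempty tail $F_\beta = R_\beta\setminus{\cup T}$. Your proposed contradiction does not work. First, the chosen points $\lambda_\beta\in F_\beta$ have no reason to be cofinal in $\sup R$; in fact the opposite tendency holds: if $\gamma<\beta$ and both $R_\gamma$ and $R_\beta$ have nonempty finite tails (hence maxima), then $\max R_\beta$, being of rank $\beta>\gamma$, is a limit of elements of $R_\gamma$, forcing $\max R_\gamma>\max R_\beta$ -- the maxima \emph{decrease} as $\beta$ increases. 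Second, even granting some increasing sequence of elements of $R$ of unbounded ranks, its supremum is merely a limit point of $R$ and need not be an element of $R$ at all; the hypotheses of Theorem \ref{theorem L[R]} only constrain the CB ranks of \emph{members} of $R$, so no contradiction with ``no element of rank $\alpha$'' or ``finitely many self-ranked elements'' is manufactured. Your fallback appeals to Lemma \ref{lemma critical points line up with Reg} and to genericity are also beside the point: this lemma is purely combinatorial.

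The intended argument is both simpler and needs none of the theorem's hypotheses. Let $X=\{\beta<\alpha: R_\beta\setminus{\cup T}\neq\emptyset\}$. For $\beta\in X$ the set $R_\beta$ has a finite nonempty final segment, hence a maximum $\lambda_\beta$. As noted above, $\gamma<\beta$ with $\gamma,\beta\in X$ implies $\lambda_\gamma>\lambda_\beta$ (since $\lambda_\beta$ is a limit of rank-$\gamma$ elements). So if $X$ were infinite, $(\lambda_\beta)_{\beta\in X}$ would be an infinite strictly descending sequence of ordinals. Hence $X$ is finite, and $R\setminus{\cup T}=\bigcup_{\beta\in X}F_\beta$ is a finite union of finite sets. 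You came close when you wrote ``consider the largest such over all $\beta$,'' but you abandoned that thread; the descending-chain observation is the missing idea.
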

	\begin{proof}
		Clearly, for any $\beta<\alpha$, $R_\beta\setminus {\cup T}$ will just consist of the top few elements of $R_\beta$, above which there is no element of $W_\beta$. Since $W_\beta$ is defined as the class of $\omega$ limits of $R_\beta$, there can only be finitely many of these for any given $\beta$.
		
		Let $X$ be the set of all $\beta<\alpha$ such that $R_\beta \setminus {\cup T}\neq \emptyset$. As we've seen, if this holds for some $\beta$ then $R_\beta$ has a finite (but nonempty) final segment, and so contains a maximum element $\lambda_\beta$.
		
		If $\gamma,\beta \in X$ and $\gamma<\beta$, then $\lambda_\beta$ is a limit of elements of $R_\gamma$, and so $\lambda_\gamma>\lambda_\beta$. Hence, $X$ must be finite, or $(\lambda_\beta)_{\beta\in X}$ would be an infinite decreasing sequence of ordinals.
		
		So $R\setminus {\cup T} = \bigcup_{\beta \in S} (R_{\beta}\setminus {\cup T})$ is a finite union of finite sets, and hence finite.
	\end{proof}
	
	\begin{lemma}\label{lemma R and T definability}
		If $M$ is any model of $\ZFC$, the $T$-definable sets over $M$ are precisely the same as the $R$-definable sets over $M$.  In particular, if $M$ is closed under $T$-definability then it contains $L[R]$ as a subclass, definable in any language which includes $T$. Likewise, if $M$ is closed under $R$-definability and contains $M_{\On}$ as an $R$-definable subclass, then it also contains $M_{\On}[T]$ as an $R$-definable subclass.
	\end{lemma}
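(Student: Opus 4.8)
The plan is to prove that $R$ and $T$ are definable from one another over any model of $\ZFC$, modulo only an ordinal parameter (the bound $\alpha$) and a finite parameter (a finite set of ordinals); since both parameters lie in $M$, it follows immediately that the $T$-definable sets over $M$ and the $R$-definable sets over $M$ coincide (allowing set parameters throughout), which is the content of the first sentence. The three ``in particular'' clauses are then straightforward consequences.

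First I would recover $T$ from $R$. The Cantor--Bendixson decomposition $\langle R_\beta : \beta\in\On\rangle$ is obtained from $R$ by the transfinite recursion of the opening definition, so the relation ``$\xi\in R_\beta$'' is expressible by a single formula in the language $\{\in,R\}$, coding the recursion by its set-sized approximations in the usual way. From $R_\beta$ one then defines $W_\beta$, the class of its $\omega$-limits, and, for $\lambda\in W_\beta$, the increasing enumeration $S_\beta^\lambda$ of $(R_\beta\cap\lambda)\setminus\sup(W_\beta\cap\lambda)$ --- again by formulas in $\{\in,R\}$ with parameter $\beta$. Hence $T=\{(\mu,S_\beta^\mu):\beta<\alpha,\ \mu\in W_\beta\}$ is definable over $M$ from $R$ and $\alpha$ (from $R$ alone if $\alpha=\infty$). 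Conversely, from $T$ one reads off $\cup T$, the class of ordinals occurring as a term of some sequence in $\ran(T)$; by the previous lemma $F:=R\setminus\cup T$ is finite, and $\cup T\subseteq R$ by the definition of $T$, so $R=\cup T\cup F$. As $F\subseteq R\subseteq M$ is finite we have $F\in M$, so $R$ is definable over $M$ from $T$ and $F$. This establishes the equality of the two classes of definable sets.

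For the remaining clauses: if $M$ is closed under $T$-definability then, by the equality just proved, it is closed under $R$-definability; an induction on ordinals then shows that every level $L_\alpha[R]$ of the $L[R]$-hierarchy belongs to $M$, so $L[R]$ is a subclass of $M$, and since ``$x\in L[R]$'' is $R$-definable --- hence $T$-definable --- this subclass is definable in any language containing $T$. Similarly, suppose $M$ is closed under $R$-definability and $M_{\On}$ is an $R$-definable subclass of $M$. Then $\PP$ is definable inside $M_{\On}$; the generic filter determined by the sequence $T$ is definable from $\PP$ and $T$, since a Magidor generic is determined by the sequence it adds, and hence is $R$-definable by what was shown above; and the evaluation $\tau\mapsto\tau^T$ on $\PP$-names $\tau\in M_{\On}$ is $R$-definable. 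Therefore $M_{\On}[T]$ is an $R$-definable subclass of $M$, and by closure it is contained in $M$.

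The one point calling for genuine care is the very first step: checking that the whole Cantor--Bendixson recursion defining $R_\beta$, $W_\beta$, $S_\beta^\lambda$ and ultimately $T$ really is captured by a single $\{\in,R\}$-formula over an \emph{arbitrary} model of $\ZFC$ rather than merely over $V$ --- that is, the routine but necessary verification that a class-length recursion with a class parameter is formula-expressible via set-sized approximations. The rest is either immediate from the preceding lemma or entirely standard.
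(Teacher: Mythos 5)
Your proof is correct and follows essentially the same route as the paper: $R$ is recovered from $T$ via the finite difference $R\setminus{\cup T}$ (the preceding lemma), and $T$ from $R$ via the Cantor--Bendixson decomposition, with the ``in particular'' clauses handled by the standard definability of the $L[R]$-hierarchy, the generic filter, and name evaluation. The one step you flag as delicate is handled more locally in the paper: rather than formalizing the class-length CB recursion by set approximations, it applies the set-level function ``CB rank of the largest element'' to the set $R\cap(\kappa+1)$, which suffices since the rank of $\kappa$ in $R$ depends only on $R\cap(\kappa+1)$.
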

	\begin{proof}
		$R$ is simply $\cup T$ together with a finite collection of extra ordinals, so any formula in terms of $R$ can easily be turned into one in terms of $T$. Conversely, in a model of $\ZFC$ there is a class function taking any set $S$ to the Cantor-Bendixson rank of its largest element. We can apply this class function to the $R$ definable set $R\cap (\kappa+1)$ to determine the Cantor-Bendixson rank of any $\kappa \in R$, and thus calculate the classes $R_\beta$ for $\beta<\alpha$ within $M$. Once we have done that, it is easy to find a formula to calculate where, if anywhere, a given $\kappa$ will appear in $T$, and thus turn any formula in terms of $T$ into one in terms of $R$.
	\end{proof}
		
	\begin{lemma}\label{lemma M_infty contained in L[Reg]}
		(The set part of) $M_{\On}$ is an $R$-definable subclass of $L[R]$.
	\end{lemma}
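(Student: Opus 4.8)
The plan is to reconstruct $M_{\On}$ inside $L[R]$ by an $R$-definable recursion, exploiting that $L[R]$ can see exactly where the measurable cardinals of $M_{\On}$ lie and that an order-$0$ mouse is pinned down by that information. Since $L[R]\models\ZFC$ and has $R$ as a class, it correctly computes Cantor--Bendixson ranks and hence the classes $R_\beta$, $W_\beta$ and $W=\bigcup_{\beta<\alpha}W_\beta$; by Lemma~\ref{lemma critical points line up with Reg}(2) this $R$-definable class $W$ is precisely the class $\pi_{0,\On}(C)$ of measurable cardinals of $M_{\On}$. As $M_{\On}$ is (the set part of) an iterate of the mouse $\Ome$, which lies below $O^{\mathrm{Sword}}$, it satisfies $V=L[\vec U]$ with $\vec U = E^{M_{\On}}$ a coherent sequence of order-$0$ measures and $\dom(\vec U)=W$; moreover $M_{\On}$ is the increasing union of its set-sized initial segments $N_\mu := M_{\On}\restriction\mu$ (for $\mu$ an $M_{\On}$-cardinal), each a sound premouse below $O^{\mathrm{Sword}}$ whose measurable cardinals form the $R$-definable set $W\cap\mu$. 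So it is enough to reconstruct, inside $L[R]$, the coherent sequence $\vec U$.

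Here I would invoke the standard uniqueness theory for order-$0$ mice: a set-sized iterable premouse below $O^{\mathrm{Sword}}$ is determined up to isomorphism by its set of measurable cardinals (Kunen-style comparison, all iterations being linear), and well-foundedness of linear iterations is absolute between transitive models of $\ZFC$. Thus, working in $L[R]$, one defines $\vec U$ by recursion on $W$: having reconstructed $N_\mu = L_\mu[\vec U\restriction\mu]$, let $U_\mu$ be the unique normal ultrafilter on $(\pow(\mu))^{N_\mu}$ for which $L[N_\mu, U_\mu]$ is iterable. (Uniqueness makes $U_\mu$ definable once it is known to exist; note $U_\mu$ need not be $\mu$-complete from the point of view of $L[R]$, in which $\mu$ is singular, but this is irrelevant, since the construction only uses the fact that $N_\mu$ regards $U_\mu$ as a measure.) The resulting candidate $L[\vec U]$ is then, by uniqueness and the fact that the genuine $M_{\On}$ is a premouse of exactly this shape in $V$, equal to $M_{\On}$; hence $M_{\On}=L[\vec U]$ is an $R$-definable subclass of $L[R]$.

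The step needing real work, which I expect to be the main obstacle, is to show that these $U_\mu$ genuinely \emph{exist} in $L[R]$ --- that $L[R]$ can not merely locate but actually produce the measures of $M_{\On}$ (equivalently, that $\Ome\in L[R]$, after which the whole iteration $\mathcal I$, whose construction rule by Lemma~\ref{lemma critical points line up with Reg} refers only to the $R$-definable class $W$, can be run inside $L[R]$). I would attack this from two directions. First: argue directly that $\Ome$, being a small mouse, is reconstructible in any transitive $\ZFC$-model containing $\alpha$, since the properties defining it --- being a sound active iterable premouse below $O^{\mathrm{Sword}}$ with the prescribed Cantor--Bendixson behaviour of its top measure --- are absolute. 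Failing that, use the Prikry data: by Lemma~\ref{lemma R and T definability} the generic sequence $T$, and hence each of the sets $\Sigma_\mu^T$ (defined as in Proposition~\ref{proposition magidor forcing measure generating sets} with $T$ in place of $G$), is $R$-definable over $L[R]$; combining this with Proposition~\ref{proposition magidor forcing measure generating sets} and the analysis of the critical points of $\mathcal I$ in the proof of Lemma~\ref{lemma critical points Magidor generic} --- which shows the critical points mapping to $\mu$ lie in every $U_\mu$-measure-one set --- one should be able to recover each $U_\mu$ inside $L[R]$, determined uniquely from $N_\mu$, $\mu$ and the $R$-definable datum $\Sigma_\mu^T$. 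Everything else is routine bookkeeping.
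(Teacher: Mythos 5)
The step you flag as ``the main obstacle'' is exactly the heart of the lemma, and neither of your proposed resolutions closes it. The first --- that $\Ome$ is reconstructible in any transitive $\ZFC$-model containing $\alpha$ because its defining properties are absolute --- is false: the existence of an iterable mouse is not downward absolute to an arbitrary inner model, and indeed getting such mice into an $L[\Reg]$-like model is the content of Theorem \ref{theorem friendly machetes}, which needs ``$\On$ is Mahlo'' and a substantial comparison argument (and even then concerns $K^{L[\Reg]}$ for the full class $\Reg$, not $L[R]$ with $R=\Reg_{<\alpha}$). There is also a problem with the uniqueness principle your recursion rests on: ``a set-sized iterable premouse below $O^{\mathrm{Sword}}$ is determined up to isomorphism by its set of measurable cardinals'' is false as stated (a proper end-extension adding no new measurables has the same measurables), so even granted existence you would need a more carefully formulated Kunen--Mitchell style uniqueness statement to make the step from $N_\mu$ to $U_\mu$ well-defined.

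Your second prong gestures at what is in fact the paper's actual argument, but leaves it at ``one should be able to recover each $U_\mu$''. The point you are missing is that no recovery of measures as ultrafilters, and no knowledge that $\Ome\in L[R]$, is needed at all: the set part of $M_{\On}$ is $L[E]$, so one constructs $L_\beta[E]$ by recursion inside $L[R]$, and at each stage one only has to decide whether a set $X$ \emph{already constructed} (hence already known to lie in $M_{\On}$) is measure one at a given $\lambda\in W_\beta$. Genericity of $T$ over $M_{\On}$ supplies the test: $X$ is measure one if and only if $X$ contains all but finitely many terms of $S^\beta_\lambda$ --- the forward direction from the Mathias criterion (Theorem \ref{theorem Mathias Magidor criterion}, or directly via Claim \ref{claim critical points in measure 1 set}), and the converse because the complement $\lambda\setminus X$ is then a measure-one set of $M_{\On}$, so $X$ meets the sequence only finitely often. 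Since the class of measurables of $E$ is the $R$-definable class $W$ and the sequences $S^\beta_\lambda$ are $R$-definable, this makes $E$, and hence $L[E]$, $R$-definable over $L[R]$; the paper explicitly notes that no membership test for $M_{\On}$ is required, which is precisely the device that dissolves your ``main obstacle''. As written, your proposal names the right ingredients but does not supply this argument, and its principal fallback (absoluteness of the mouse) would fail.
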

	\begin{proof}
		Recall that by definition, the set part of $M_{\On}$ is $(J_{\On}^E, E)=(L[E],E)$, where $E$ is the extender sequence of $M_{\On}$. So it suffices to show that $L[R]$ can calculate $E$, i.e. that it can determine which ordinals $E$ believes are measurable, and which of the sets in $M_{\On}$ are measure $1$. We can then carry out the usual recursive construction to calculate $L_\beta[E]$ for all $\beta$, and hence find $M_{\On}$. Note that we do \textit{not} need to devise an explicit test for whether a given set is an element of $M_{\On}$; it suffices to give a way to determine whether a set we already know is in $M_{\On}$ is measurable.
		
		The class of measurables of $E$ is just $\pi_{0,\On}(C)= \bigcup_\beta W_\beta=W$. This is clearly definable over $L[R]$.
		
		Now we must find a way to express in $L[R]$ the statement, for $\lambda \in W$ and $X\in L[R]$,
		
		\begin{quotation}
			``If $X\in L[E]$ then $X$ is a measure $1$ subset of $\lambda$."
		\end{quotation}
		
		Of course, it is easy to express ``$X$ is a subset of $\lambda$". The challenge is in the ``measure $1$" part. Suppose that $X\in L[E]$, and that (say) $\lambda \in W_\beta$. If $X$ is measure $1$, then by Theorem \ref{theorem Mathias Magidor criterion}\footnote{Invoking genericity of $T$ and Ben Neria's result is actually very much overkill for showing this simple fact. It can be proved directly with an adjusted version of Claim \ref{claim critical points in measure 1 set}. But since we've proved genericity already, we might as well save ourselves some time by making use of it.} and the fact that $T$ is generic over $L[E]$, $X$ will contain all but finitely many terms of $S^\beta_\lambda$.
		
		On the other hand, if $X\subset \lambda$ is not measure $1$ but is an element of $L[E]$, then $\lambda \setminus X$ will be measure $1$ instead. So as we just saw, $\lambda \setminus X$ will contain all but finitely many terms of $S^\epsilon_\lambda$, and therefore $X$ itself will contain at most finitely many terms of $S^\epsilon_\lambda$.
		
		So if $X\in L[E]$ is a subset of $\lambda$, then $X$ is measure $1$ if and only if it contains all but finitely many terms of $S^\beta_\lambda$, which is a statement which can be expressed in $L[R]$. So $L[R]$ knows what $E$ looks like, and can therefore define (the set part of) $M_{\On}$ in terms of $R$.
	\end{proof}
	
	\begin{corollary}
		$M_{\On}[T]$ is precisely $L[R]$, and its extender sequence is $R$-definable over $L[R]$.
	\end{corollary}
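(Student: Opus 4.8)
The plan is to establish the two inclusions $M_{\On}[T]\subseteq L[R]$ and $L[R]\subseteq M_{\On}[T]$ separately, each as a single application of Lemma~\ref{lemma R and T definability}, and then to read off the assertion about the extender sequence directly from the proof of Lemma~\ref{lemma M_infty contained in L[Reg]}.

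For $M_{\On}[T]\subseteq L[R]$, I would take $M=L[R]$ in the third clause of Lemma~\ref{lemma R and T definability}. The model $L[R]$ satisfies $\ZFC$ and is by construction closed under $R$-definability, and Lemma~\ref{lemma M_infty contained in L[Reg]} tells us that (the set part of) $M_{\On}$ is an $R$-definable subclass of $L[R]$. Hence $L[R]$ contains $M_{\On}[T]$ as an $R$-definable subclass: concretely, $L[R]$ can internally reconstruct the hyperclass forcing $\PP$ (definable over $M_{\On}$, which it has as an $R$-definable subclass) together with the generic sequence $T$ (which, being $\cup T$ reorganised into $\omega$-blocks, and with $\cup T$ differing from $R$ only on a finite set, is $R$-definable), and then form the corresponding term model, which by the previous subsections is exactly the set part of $M_{\On}[T]$.

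For the reverse inclusion $L[R]\subseteq M_{\On}[T]$, I would use the second clause of the same lemma with $M=M_{\On}[T]$. By the results of \cite{antosFriedman_hyperclassForcing} applied as in the preceding subsections (pretameness of $\PP$, the $\MK^{**}$ axioms, and genericity of $T$), $M_{\On}[T]$ is the set part of a model of $\MK^{**}$; in particular its set part models $\ZFC$ and $T$ is among its classes, so $M_{\On}[T]$ is closed under $T$-definability. Lemma~\ref{lemma R and T definability} then yields $L[R]\subseteq M_{\On}[T]$. Combining the two inclusions gives $M_{\On}[T]=L[R]$. Finally, the extender sequence $E$ of $M_{\On}$ — which is the canonical extender sequence of the resulting model, since the Magidor iteration adds no extenders — is $R$-definable over $L[R]$: this is precisely what the proof of Lemma~\ref{lemma M_infty contained in L[Reg]} established, where we exhibited an $R$-formula for the measurables of $E$ (namely $W=\bigcup_\beta W_\beta$) and, for $\lambda\in W_\beta$, an $R$-formula deciding for $X\in L[E]$ whether $X$ is $U_\lambda$-measure one (namely, whether $X$ contains a tail of $S_\beta^\lambda$).

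In effect the proof is a bookkeeping argument, collecting the pieces already assembled. The only point that genuinely needs care — and which is already discharged in the previous subsections rather than here — is that Lemma~\ref{lemma R and T definability} really does apply to $M=M_{\On}[T]$, i.e.\ that the hyperclass generic extension is a transitive $\ZFC$ model closed under first-order definability with $T$ among its classes; this is exactly why pretameness, the $\MK^{**}$ axioms, and the genericity of $T$ were verified earlier.
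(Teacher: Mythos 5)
Your proof is correct and follows essentially the same route as the paper: both directions are obtained by the same two applications of Lemma~\ref{lemma R and T definability} (with $M=L[R]$ using Lemma~\ref{lemma M_infty contained in L[Reg]}, and with $M=M_{\On}[T]$ using closure under $T$-definability), and the $R$-definability of the extender sequence is read off from the proof of Lemma~\ref{lemma M_infty contained in L[Reg]}. Your extra remarks justifying why $M_{\On}[T]$ is a legitimate $\ZFC$ model closed under $T$-definability are consistent with what the paper establishes in the preceding subsections.
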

	\begin{proof}
		Applying Lemma \ref{lemma R and T definability} with $M=M_{\On}[T]$ (which is closed under $T$-definability) gives that $L[R]$ is a $T$-definable subclass of $M_{\On}[T]$. Conversely, if we apply Lemma \ref{lemma R and T definability} with $M=L[R]$, which is closed under $R$-definability and which Lemma \ref{lemma M_infty contained in L[Reg]} shows contains $M_{\On}$ as an $R$-definable subclass, we get that $L[R]$ contains $M_{\On}[T]$ as an $R$-definable subclass. Combining these, we see that
		$$L[R]\subseteq M_{\On}[T]\subseteq L[R]$$
	\end{proof}	
	This ends the proof of Theorem \ref{theorem L[R]}.

\section{Finding Machetes}

We end this paper with an existence result about O-Machetes. So far, we've been assuming we've already found an O-Machete, and then used it to generate some structure $L[R]$. But maybe we could do the opposite: we could start with some interesting $L[S]$ type structure for some predicate $S$, and show that every nice enough O-Machete must be an element of that model.

The predicate $S$ we are going to look at is $\Reg$, a natural choice when we want to find a model which contains mice that we've just seen can generate $L[\Reg_{<\gamma}]$. Of course, it's possible that $\Reg$ contains nothing interesting and doesn't give us any useful information: for example, if there are no inaccessibles then $\Reg$ is effectively just $\Card$. So we need to assume some largeness criterion for $\Reg$. The condition we'll be using is ``$\Reg$ is a stationary class'', which can also be expressed as ``$\On$ is Mahlo''.

\begin{theorem}\label{theorem friendly machetes}Suppose that $\On$ is Mahlo; i.e., that $\Reg$ is stationary. Then for any mouse $M$, $M <^* \Mmuind \rightarrow M<^*  
K^{L[\Reg]}$.\end{theorem}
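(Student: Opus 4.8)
The plan is to show that for every mouse $M <^* \Mmuind$ there is a class-length iterate of $M$ --- a proper-class weasel $W$ --- which is definable over $L[\Reg]$; by universality of the core model of $L[\Reg]$ (which exists, as all our mice are below $O^{\mathrm{Sword}}$) this gives $W \le^* K^{L[\Reg]}$, and a short strictness argument then yields $M <^* K^{L[\Reg]}$. We may take $M$ sound (pass to its core) and active, since iterating the top measure gives an $=^*$-equivalent mouse. We read ``$M <^* \Mmuind$'' as the assertion that the class $C$ of measurables of $M$ below $\kappa_M$ fails the indescribability of Definition~\ref{Reflect} (whether or not $\Mmuind$ itself exists): there are $\varphi$ and $\vec\gamma<\kappa_M$ with $H^M_{\kappa_M^+}\vDash\varphi(\vec\gamma,\kappa_M)$ but no $M$-measurable strictly between $\gamma_n$ and $\kappa_M$ reflecting this.

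The construction is the one of Section~4, run with $M$ in place of $\Ome$. Let $C$ be the set of measurables of $M$ below $\kappa_M$; I would define an $\On+1$-length simple iteration $\mathcal J=\langle M_i,\pi_{i,j}\rangle$ of $M$ in which at stage $i$ we iterate the least measurable of $\pi_{0,i}(C)$ that has not yet been ``parked'' onto an $\omega$-limit of the appropriate subclass of $\Reg$ (and iterate the top measure once all of them have been parked), putting $W:=M_{\On}$. By the computations of Lemma~\ref{lemma critical points line up with Reg} the class $W^{*}$ of measurables of $W$ is then a $\Reg$-definable class of ordinals, and the $\omega$-blocks of critical points sitting just below each point of $W^{*}$ assemble into a sequence $T$ meeting the Mathias and Separation criteria of Theorem~\ref{theorem Mathias Magidor criterion}; by Lemma~\ref{lemma critical points Magidor generic} (and its hyperclass version) $T$ is generic for the Magidor iteration of $W^{*}$ over $W$.

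The heart of the matter --- and the step I expect to be the main obstacle --- is verifying that this construction can actually be carried out, i.e.\ that $\Reg$ has enough structure to receive \emph{all} of $M$'s measurables; this is precisely where ``$\On$ Mahlo'' and ``$M<^*\Mmuind$'' are used together. Since $\Reg$ is stationary, every definable club class contains a regular cardinal, and iterating this reflection shows that $\Reg$ realises cofinally every ``sufficiently simple'' configuration of Cantor--Bendixson ranks: for each $\beta$ below the CB rank of $\On$ in $\Reg$ (which is $\On$) the class $\Reg_\beta$ is a proper class, and more generally the CB structure of $\Reg$ below $\On$ contains, as a $\Reg$-definable subclass, a class of regular cardinals whose CB structure matches that of $C$. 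On the other hand, $M<^*\Mmuind$ says exactly that the CB structure of $C$ is \emph{not} $\mu$-indescribable, so that (after iterating away the harmlessly many measurables of $M$ below $\gamma_n$) it sits strictly below the first level at which a merely stationary $\Reg$ could fail to provide matching targets. I would therefore prove a ``matching lemma'' --- that the rank-$\beta$ measurables of $M$ can be assigned one-to-one, cofinally, to $\Reg$-elements of rank $\beta$ for every relevant $\beta$ --- by induction on the mouse order (or on the ordinal height of $M$), using Mahlo-ness to produce the targets at each step and the bound $\Mmuind$ to keep them below the indescribability threshold. This is the one genuinely new argument: the machetes $\Ome$ (including $\alpha$ uncountable, via $R=\Reg_{<\alpha}\setminus\alpha$) and $\Ominf$ (via $R=\Reg\setminus\Hype$) are already handled by Theorem~\ref{theorem L[R]}, since those subclasses are $\Reg$-definable and hence $L[R]\subseteq L[\Reg]$.

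Finally, with $W=M_{\On}$ in hand, the arguments of Lemmas~\ref{lemma R and T definability} and~\ref{lemma M_infty contained in L[Reg]} apply verbatim: from $\Reg$ one computes CB ranks, hence $W^{*}$ and the generic sequence $T$, and then --- using the Prikry/Magidor genericity of $T$ exactly as in Lemma~\ref{lemma M_infty contained in L[Reg]} --- one decides which sets of $W$ are of measure one; thus the extender sequence of $W$ is $\Reg$-definable and $W\in L[\Reg]$. As $W$ is an iterate of the iterable mouse $M$ and iterability for (linear iterations of) mice below $O^{\mathrm{Sword}}$ is absolute to $L[\Reg]$, we get $W\le^* K^{L[\Reg]}$. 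For strictness, note $\Mmuind$ is a limit in the mouse order: choose $M^+$ with $M<^*M^+<^*\Mmuind$, apply the above to $M^+$ to get $M^+\le^* K^{L[\Reg]}$, and conclude $M<^*K^{L[\Reg]}$. (A possible alternative to the matching lemma, avoiding a re-run of Section~4, would be a covering argument: if $K^{L[\Reg]}<^*\Mmuind$ then weak covering in $L[\Reg]$ would force the CB structure of the regular cardinals to be captured by the small weasel $K^{L[\Reg]}$, contradicting stationarity of $\Reg$; I would expect the bookkeeping there to be of comparable difficulty.)
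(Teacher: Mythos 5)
Your plan hinges on an unproved ``matching lemma'' which, as you state it (matching the rank-$\beta$ measurables of $M$ to $\Reg$-elements of rank $\beta$), cannot carry the full theorem. The hypothesis $M<^*\Mmuind$ does \emph{not} confine the pattern of $M$'s measurables below $\kappa_M$ to anything describable by Cantor--Bendixson ranks: $\Mmuind$ sits far above such mice. For instance, the least active mouse $N$ in which the $N$-measurables below $\kappa_N$ are $N$-stationary is $<^*\Mmuind$ (the paper remarks on exactly this mouse in its final section), and more generally any first-order-describable configuration occurs below $\Mmuind$. For such $M$ there is no $\Reg$-definable class of ``parking spots'' handed to you by the CB hierarchy of $\Reg$, and the hypothesis ``$\On$ is Mahlo'' only populates the $\alpha$-inaccessible/CB hierarchy of $\Reg$; it does not supply classes of $V$-regulars realising arbitrary describable patterns. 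Since the whole Section~4 machinery you want to re-run (the iteration rule, Lemma~\ref{lemma critical points line up with Reg}, and crucially the $\Reg$-definability of $\pi_{0,\On}(C)$ used in Lemma~\ref{lemma M_infty contained in L[Reg]}) depends on the target class $W$ being definable from $\Reg$ and on the measurables being enumerated by CB rank, the central step of your argument is missing and the sketched route to it fails; your parenthetical covering-argument alternative is likewise only a hope, not an argument. (Your reduction to sound active $M$ and the final $W\le^*K^{L[\Reg]}$/strictness steps are comparatively minor, but they too lean on the unestablished existence of the iterate $W$ inside $L[\Reg]$.)

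The paper proves the theorem by a different and pattern-free route, which you may find instructive to compare: it sets $M_0=K^{L[\Reg]}$, takes $N_0$ in the least mouse class $>^*M_0$, and assumes for contradiction $N_0<^*\Mmuind$, so some formula $\varphi$ with parameters holds at the top measurable of $N_0$ but at no smaller $N_0$-measurable. After coiterating (and adjusting the $K$-side so that a tail weasel $\tilde{M}_{\tilde{i}}$ is definable over $L[\Reg]$), stationarity of $\Reg$ is used twice: first to see that club-many top-measure critical points of the $N$-side are fixed regular cardinals where $\varphi$ holds over $\tilde{M}_{\tilde{i}}$ (the ``useful'' ordinals, which $L[\Reg]$ can identify), and then to pick a regular limit $\delta$ of useful ordinals. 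At such a $\delta$ the top measure of the $N$-side is recovered inside $L[\Reg]$ by the criterion ``measure one $=$ contains a tail of the useful ordinals below $\delta$'', and by universality/maximality it must appear on the extender sequence of $\tilde{M}_{\tilde{i}}$, producing a smaller measurable satisfying $\varphi$ --- the desired contradiction. This argument works uniformly for every mouse below $\Mmuind$ precisely because it never needs to embed the mouse's measurable pattern into $\Reg$; it only needs $\Reg$ to be stationary so as to catch fixed points of the coiteration.
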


\begin{proof}
	Let $M_0$ be $K^{L[\Reg]}$. Let $N_0$ be in the $<^*$ least equivalence class of (set) mice which is $>^* M_0$. (If no such class exists, then the theorem is trivial.) Since $M_0$ is a weasel, it suffices to prove that $\Mmuind \leq^* N_0$.  Note that $N_0$ is active, because otherwise we could cut it down to its largest measurable and the result would still be $>^* M_0$.
	
	Assume (seeking a contradiction) that $N_0 <^* \Mmuind$. Then we can find some formula $\varphi(v_1,\ldots,v_n,v_{n+1})$, and some parameters $\gamma_1<\ldots<\gamma_n<\nu$ such that
	$$H^{N_0}_{\nu^+}\vDash \varphi(\gamma_1,\ldots,\gamma_n,\nu)$$
	but such that there is no measurable $\lambda<\nu$ of $N_0$ for which
	$$H^{N_0}_{\lambda^+}\vDash \varphi(\gamma_1,\ldots,\gamma_n,\lambda)$$
	
	Let $(\mathcal{I},\mathcal{J})=(\langle M_i,\pi_{i,j}\rangle_{i\leq j \leq \On},\langle N_i,\tau_{i,j}\rangle_{i\leq j \leq \On})$ be the coiteration of $M_0$ and $N_0$. As usual, we write $\kappa_j$ for the $j$'th critical point of this coiteration. Note that neither $\mathcal{I}$ nor $\mathcal{J}$ involves any cut-downs: $\mathcal{I}$ because it is a simple iteration, and $\mathcal{J}$ because any cut-down would lead to a smaller mouse which was still $>^* M_0$. Similarly, the top measure $\nu$ of $N_0$ will be sent precisely to $\On$ by $\mathcal{J}$, and everything below $\nu$ will be ``left behind'' at some stage of the iteration.

	Fix some $i\in \On$ which is large enough that $\mathcal{J}$ has already done everything interesting by stage $i$:
	
	\begin{enumerate}
		\item $i\in \On\setminus N_0$;
		\item For $1\leq k \leq n$, $\tau_{0,i}(\gamma_k)=\tau_{0,\infty}(\gamma_k)=:\widebar{\gamma}_k$.
	\end{enumerate}
	
	To avoid some annoying technicalities, we also choose $i$ to be inaccessible in $V$, and such that $\kappa_i$ is the top measurable of $\bar{N}_i$. (This is easy to do because $\mathcal{J}$ iterates the top measure of $N$ club many times and $\Reg^V$ is stationary.)
	
	We must now perform a small, but rather notationally heavy, technical step. The weasel $M_i$ is not necessarily definable over $L[\Reg]$. For the final part of the proof to work, we need to adjust the $\mathcal{I}$ side of the coiteration so we find our way back into $L[\Reg]$ at some point after stage $i$. We also need to adjust $\mathcal{J}$ so that the two iterations agree.
	
	The dual iteration $$(\tilde{\mathcal{I}},\tilde{\mathcal{J}})=(\langle \tilde{M_j},\tilde{\pi}_{j,k}\rangle_{j,k<\leq \On},\langle \tilde{N_j},\tilde{\tau}_{j,k}\rangle_{j,k<\leq \On})$$
	we construct will consist of three parts:
	\begin{enumerate}
		\item The initial segment $({\mathcal{I}\upharpoonleft i}, {\mathcal{J}\upharpoonleft i})=(\langle M_j,\pi_{j,k}\rangle_{j,k\leq i},\langle N_j,\tau_{j,k}\rangle_{j,k\leq i})$ of $(\mathcal{I},\mathcal{J})$.
		\item An iteration $\langle \tilde{M}_j,\tilde{\pi}_{j,k}\rangle_{i\leq j,k \leq \tilde{i}}$ (for some $\tilde{i}\in \Reg$) of $M_i=\tilde{M}_i$ to some weasel $\tilde{M}_{\tilde{i}}$ which is definable over $L[\Reg]$, together with a corresponding iteration of $N_i=\tilde{N}_i$, such that $\tilde{N}_{\tilde{i}}$ agrees with (i.e. has the same domain and measures as) $\tilde{M}_{\tilde{i}}$ below the top measure $\kappa_{\tilde{N}_{\tilde{i}}}$ of $\tilde{N}_{\tilde{i}}$.
		\item The coiteration $\langle \tilde{M}_j,\tilde{\pi}_{j,k}\rangle_{\tilde{i}\leq j,k \leq \On}$ of $\tilde{M}_{\tilde{i}}$ and $\tilde{N}_{\tilde{i}}$. This will only iterate measurables greater than or equal to the top measure of $\tilde{N}_{\tilde{i}}$, since $\tilde{N}_{\tilde{i}}$ agrees with $\tilde{M}_{\tilde{i}}$ below its top measure $\kappa_{\tilde{N}_{\tilde{i}}}$. That is, $\tilde{M}_{\tilde{i}}\restriction \kappa_{\tilde{N}_{\tilde{i}}}
 = \tilde{N}_{\tilde{i}}\restriction \kappa_{\tilde{N}_{\tilde{i}}}$.
	\end{enumerate}
	
	Note that $\mathcal{I}$ and $\mathcal{J}$ are not necessarily normal iterations, although each of their three pieces will be normal.
	
	The first pieces of $\tilde{\mathcal{I}}$ and $\tilde{\mathcal{J}}$ are already defined; for $j\leq i$ take $\tilde{M}_j=M_j$ and $\tilde{N}_j=N_j$. We define the second piece of $\tilde{\mathcal{I}}$ as follows. Take some sufficiently long universal iteration of $M_0$ in which is definable over $L[\Reg]$. Then we can simply iterate $\tilde{M}_i$ as described in \cite{welch_HaertigPaper}[2.9] to produce some term $\tilde{M}_{\tilde{i}}$ of this iteration, which must also be definable over $L[\Reg]$. Without loss of generality, we can also take $\tilde{i}$ to be regular in $V$.
	
	Next we define the second piece of $\tilde{J}$, using the second part of $\tilde{I}$ as a guide. Note that $N_i=\tilde{N}_i$ and $M_i=\tilde{M}_i$ agree below $\kappa_i$, which by assumption is the top measure $\kappa_{\tilde{N}_i}$ of $\tilde{N}_i$. We define $\tilde{\mathcal{J}}\upharpoonleft [i,\tilde{i}]$ such that for all $i\leq j \leq \tilde{j}$, $\tilde{N}_j$ agrees with $\tilde{M}_j$ below the former's top measure $\tilde{\tau}_{i,j}(\kappa_i)$. To do this, if $i\leq j <\tilde{i}$ and $\tilde{M}_{j+1}=\Ult(\tilde{M}_j,U)$ we simply define:
	\begin{equation*}
		\tilde{N}_{j+1}:=\begin{cases}
			\Ult(\tilde{N}_j,U) &\text{if} \crit(U)<\tilde{\tau}_{i,j}(\kappa_i)\\
			\tilde{N}_j &\text{if} \crit(U)>\tilde{\tau}_{i,j}(\kappa_i)
		\end{cases}
	\end{equation*}
	
	Limit stages are handled by taking direct limits. It is easy to show by induction that for $i\leq j \leq \tilde{i}$, $\tilde{M}_j$ and $\tilde{N}_j$ agree below the latter's largest measurable.
	
	Finally, we define the third parts of $\tilde{\mathcal{I}}$ and $\tilde{\mathcal{J}}$ by coiterating $\tilde{M}_{\tilde{i}}$ and $\tilde{N}_{\tilde{i}}$ in the usual way.
	
	Let $\tilde{\kappa}_j$ denote the $j$th critical point of $\tilde{\mathcal{I}}$ and $\tilde{\mathcal{J}}$, noting that the two iterations have the same critical points. The $\tilde{\kappa}_j$ sequence is strictly increasing except at $i$ and $\tilde{i}$. For $1\leq k \leq n$, let $\tilde{\gamma}_k=
	\tilde{\tau}_{i,\tilde{i}}(\widebar{\gamma}_k)=\tilde{\tau}_{0,\tilde{i}}(\gamma_k)$. Since $\widebar{\gamma}_k<\kappa_{N_i}$, we know $\tilde{\gamma}_k<\kappa_{\tilde{N}_{\tilde{i}}}=\tilde{\kappa}_{\tilde{i}}$, and hence $\tilde{\gamma}_k=\tilde{\tau}_{0,\On}(\gamma_k)$ and also $\tilde{\pi}_{\tilde{i},\On}(\tilde{\gamma}_k)=\tilde{\gamma}_k$. By elementarity, $\tilde{M}_{\On}=\tilde{N}_{\On}\restriction \On$ contains no measurables $\lambda$ such that
	$$H^{\tilde{M}_{\On}}_{\lambda^+}\vDash \varphi(\tilde{\gamma}_1,\ldots,\tilde{\gamma}_n,\lambda)$$
	and the same is thus also true of $\tilde{M}_{\tilde{i}}$.

	We shall now show that $L[\Reg]$ can \textit{almost} calculate the critical points of the iteration $\tilde{\mathcal{J}}$ at which we iterate the top measure. But only ``almost'': the test we use misses some of the critical points.
	
	\begin{definition}
		Let us call an ordinal $\delta$ \textit{useful} if:
		
		\begin{itemize}
			\item $\delta>\sup_{j<\tilde{i}}\tilde{\kappa}_j$ and $\delta>\lvert\tilde{N}_{\tilde{i}}\rvert$;
			\item $H^{\tilde{M}_{\tilde{i}}}_{\delta^+} \vDash\varphi(\tilde{\gamma}_1,\ldots,\tilde{\gamma}_n,\delta)$;
			\item $\delta \in \Reg$.
		\end{itemize}

	\end{definition}
	Notice that $L[\Reg]$ can calculate which ordinals are useful, since it knows about $\tilde{M}_{\tilde{i}}$ and $\Reg$.
	\begin{lemma}\label{Lemma useful ordinals are critical points}
		Let $\delta\in \On$ be useful. Then $\tilde{\pi}_{\tilde{i},\On}(\delta)=\delta$, and $\delta=\tilde{\kappa}_\delta=\kappa_{\tilde{N}_\delta}$ is a critical point of $\tilde{\mathcal{J}}$ where we iterated the top measure.
	\end{lemma}
	
	\begin{proof}
		Let $j\in \On$ be least such that $j\geq \tilde{i}$ and $\tilde{\kappa}_j\geq \delta$. Since $\delta$ is strongly inaccessible in $V$, we know $\tilde{\pi}_{\tilde{i},j}(\delta)=\delta$ by Lemma \ref{lemma V cardinals fixed points}. Also, since $H^{\tilde{M}_{\tilde{i}}}_{\delta^+} \vDash\varphi(\tilde{\gamma}_1,\ldots,\tilde{\gamma}_n,\delta)$, we know that $\delta$ can't be measurable in $\tilde{M}_{\tilde{i}}$, and hence can't be measurable in $\tilde{M}_j$ either. So either $\tilde{\kappa}_j>\delta$, or $\tilde{\kappa}_j=\delta$ is a ``do nothing'' step of $\tilde{\mathcal{I}}$. In either case, $\tilde{\pi}_{j,\On}(\delta)=\delta$ and hence of $\tilde{\pi}_{\tilde{i},\On}(\delta)=\delta$.		
		
		Now, $\delta$ is strongly inaccessible in $V$ and larger than $\lvert\tilde{N}_{\tilde{i}}\rvert$, so $k<\delta$ implies $\lvert \tilde{N}_k\rvert<\delta$. Hence by \ref{Lemma V cardinals end up with a measurable on them} we know that $\delta=\sup_{k<\delta}\tilde{\kappa}_k$ is measurable in $\tilde{N}_{\delta}$, and $\tilde{\kappa}_\delta\geq \delta$. But we also know that $\delta=\tilde{\pi}_{\tilde{i},\On}(\delta)$ isn't measurable in $\tilde{N}_{\On}$ since it wasn't measurable in $\tilde{M}_{\tilde{i}}$, and hence $\tilde{\kappa}_\delta=\delta$ is a (nontrivial) critical point of $\tilde{\mathcal{J}}$. Finally, 
		$$H_{\delta^+}^{\tilde{N}_\delta}=H_{\delta^+}^{\tilde{N}_{\On}}=H_{\delta^+}^{\tilde{M}_{\tilde{i}}}\vDash \varphi(\tilde{\gamma}_1,\ldots,\tilde{\gamma}_n,\delta)$$
		But $\tilde{N}_\delta=^*N_0$ believes there are no measurables $\delta'$ with this property other than its top measure, and hence $\delta$ must be the top measure of $\tilde{N}_\delta$.
		
	\end{proof}

	Conversely, any sufficiently nice critical point is useful, and thus we can find as many useful ordinals as we need.

	\begin{lemma}
		There are unboundedly many useful ordinals.
	\end{lemma}
	
	\begin{proof}
		In the iteration $\tilde{N}_{\tilde{i}}\rightarrow \tilde{N}_{\On}$ we iterate the top measure a club class of times. For any such top-measure iteration point $\tilde{\kappa}_h$,
		
		\begin{align*}
		H^{\tilde{N}_{h}}_{\tilde{\kappa}_h^+} &\vDash\varphi(\tilde{\gamma}_1,\ldots,\tilde{\gamma}_n,\tilde{\kappa}_h)\\
		H^{\tilde{N}_{\On}}_{\tilde{\kappa}_h^+} &\vDash\varphi(\tilde{\gamma}_1,\ldots,\tilde{\gamma}_n,\tilde{\kappa}_h)\\
		H^{\tilde{M}_{h}}_{\tilde{\kappa}_h^+} &\vDash\varphi(\tilde{\gamma}_1,\ldots,\tilde{\gamma}_n,\tilde{\kappa}_h)
		\end{align*}
		
		Hence, $\tilde{\kappa}_h$ cannot be measurable in $\tilde{M}_h$, since $\tilde{M}_h$ contains no measurables which believe $\varphi$. In particular, $\tilde{\kappa}_h$ is not, for any $\tilde{i}<j<h$, the image under $\tilde{\pi}_{j,h}$ of some measurable in $\tilde{M}_j$.
		
		Since there are a club of such $\tilde{\kappa}_h$ and $\Reg^V$ is stationary, we can find unboundedly many such $\tilde{\kappa}_h\in \Reg$. For such ordinals, Lemma \ref{lemma V cardinals fixed points}, tells us that $\tilde{\kappa}_h$ is a fixed point of $\tilde{\pi}_{\tilde{i},h}$, and so.
		$$H^{\tilde{M}_{\tilde{i}}}_{\tilde{\kappa}_h^+} \vDash\varphi(\tilde{\gamma}_1,\ldots,\tilde{\gamma}_n,\tilde{\kappa}_h)$$
		
		Any sufficiently large such ordinal therefore satisfies the criteria for usefulness.	
	\end{proof}

	As remarked above, ``$\delta$ is a useful ordinal" can be expressed as a statement about $\tilde{M}_{\tilde{i}}$, $\delta$ and $\Reg$, so $L[\Reg]$ can identify the class of all useful ordinals. For $\alpha\in \On$ let $\delta_\alpha$ be the $\alpha$'th useful ordinal. Let $C$ be the club of all limits of useful ordinals, and (again exploiting the fact that $\Reg$ is stationary) let $\delta \in C\cap \Reg$. Then (since $\delta$ is inaccessible) we know $\delta=\delta_\delta=\sup_{\alpha<\delta} \delta_\alpha$.
	
	Lemma \ref{Lemma useful ordinals are critical points} we know that $\delta$ is the top measure of $\tilde{N}_\delta$, and hence that
	$$H^{\tilde{N}_\delta}_{\delta^+}\vDash \varphi(\tilde{\gamma}_1,\ldots,\tilde{\gamma}_k,\delta)$$
	But the same lemma also tells us that $\delta=\tilde{\kappa}_\delta$, and hence that $H^{\tilde{N}_\delta}_{\delta^+}=H^{\tilde{N}_{\On}}_{\delta^+}=H^{\tilde{M}_{\On}}_{\delta^+}$. Finally, Lemma \ref{Lemma useful ordinals are critical points} also tells us that $\delta=\tilde{\pi}_{\tilde{i},\On}(\delta)$. Putting this together, we find that
	$$H^{\tilde{M}_{\tilde{i}}}_{\delta^+}\vDash\varphi(\tilde{\gamma}_1,\ldots,\tilde{\gamma}_k,\delta)$$
	
	To find our promised contradiction, we will show that $\delta$ is measurable in $\tilde{M}_{\tilde{i}}$.	
	
	Since $\delta$ has the same subsets in $\tilde{N}_\delta$ and $\tilde{M}_{\On}$ we know that the normal measure $U$ on $\mathcal{P}(\delta)\cap \tilde{N}_\delta$ in $\tilde{N}_\delta$ is also a normal measure on $\mathcal{P}(\delta)\cap\tilde{M}_{\On}$. (Of course, $\delta$ isn't actually measurable in $M_{\On}$, but that's because the $M_{\On}$ doesn't know about $U$, not because anything has changed about the subsets of $\delta$.)
	
	So the pull-back $U^*$ of $U$ to $\tilde{M}_{\tilde{i}}$, defined by
	$${X\in U^*} \iff {X\in \tilde{M}_{\tilde{i}}}\wedge {X\subset \delta} \wedge {\tilde{\pi}_{\tilde{i},\On}(X)\in U}$$
	is a normal measure on $\delta=\tilde{\pi}_{\tilde{i},\On}^{-1}(\delta)$ over $\tilde{M}_{\tilde{i}}$. So we've found a normal measure on $\delta$ in $\tilde{M}_{\tilde{i}}$. It remains to show that $U^*$ is in the extender sequence of $\tilde{M}_{\tilde{i}}$.
	
	\begin{lemma}A subset $X \in \tilde{N}_\delta$ of $\delta$ is measure $1$ if and only if it contains a tail of the sequence $(\delta_\alpha)_{\alpha<\delta}$. Hence, $L[\Reg]$ can identify whether a subset of $\delta$ in $\tilde{N}_h\cap \tilde{M}_{\tilde{i}}$ is measure $1$ or not.\footnote{Note the similarities between this lemma and the proof of Lemma \ref{lemma M_infty contained in L[Reg]}. The only change is that we're now dealing with a $\delta$ sequence of critical points, whereas back there it was just an $\omega$ sequence. This makes no difference to the underlying proof, but it means we can't use the same shortcut we did in that lemma.}
	\end{lemma}
	\begin{proof}
		Suppose that $X\in \tilde{N}_\delta$ is measure $1$. Let $f\in \tilde{N}_{\tilde{i}}$ and $j_0<\ldots<j_m<\delta$ be such that $X=\tilde{\tau}_{\tilde{i},\delta}(f)(\tilde{\kappa}_{j_0},\ldots,\tilde{\kappa}_{j_m})$. Let $\alpha<\delta$ be large enough that $\delta_\alpha>\tilde{\kappa}_{j_m}$. Now we know by Lemma \ref{Lemma useful ordinals are critical points} that $\delta_\alpha=\tilde{\kappa}_{\delta_\alpha}$ and $\tilde{\tau}_{\delta_\alpha,\delta}(\delta_\alpha)=\delta$. So by elementarity, $X_\alpha:=\tilde{\tau}_{\tilde{i},\delta_\alpha+1}(f)(\tilde{\kappa}_{j_0},\ldots,\tilde{\kappa}_{j_m})$ is a measure $1$ subset of $\tilde{\tau}_{\delta_\alpha,\delta_\alpha+1}(\delta_\alpha)$. So $\delta_\alpha\in X_\alpha$. But then 
		$$\delta_\alpha=\tilde{\tau}_{\delta_\alpha+1,\delta}(\delta_\alpha)\in \tilde{\tau}_{\delta_\alpha+1,\delta}(X_\alpha)=X$$
		
		\noindent So $X$ contains $\delta_\alpha$ for all sufficiently large $\alpha<\delta$.
		
		On the other hand, if $X\in \tilde{N}_\delta$ is not measure $1$, then $\delta \setminus X$ is measure $1$ instead. So $\delta\setminus X$ contains a tail of the $\delta_\alpha$, and therefore $X$ does not contain such a tail.
	\end{proof}
	
	By the preceding lemma, and the fact that $\mathcal{P}(\delta)$ is the same in $\tilde{N}_\delta$ and $\tilde{M}_{\On}$ we know for $X\in \tilde{M}_{\tilde{i}}$ that
	\begin{align*}
		X\in U^* &\iff X\subset \delta \wedge \tilde{\pi}_{\tilde{i},\On}(X)\in U\\
		&\iff X\subset \delta \wedge \tilde{\pi}_{\tilde{i},\On}(X)\text{ contains a tail of }(\delta_\alpha)_{\alpha<\delta}\\
		&\iff X\subset \delta \wedge \exists \beta<\delta \  \forall \alpha \in (\beta,\delta)\, \delta_\alpha \in \tilde{\pi}_{\tilde{i},\On}(X)\\
		&\iff X\subset \delta \wedge \exists \beta<\delta \ \forall \alpha \in (\beta,\delta) \tilde{\pi}_{\tilde{i},\On}(\delta_\alpha)\, \in \tilde{\pi}_{\tilde{i},\On}(X)\\
		&\iff X\subset \delta \wedge \exists \beta<\delta \ \forall \alpha \in (\beta,\delta)\, \delta_\alpha \in X
	\end{align*}
	
	The penultimate line follows because we know $\tilde{\pi}_{\tilde{i},\On}(\delta_\alpha)=\delta_\alpha$  (see Lemma \ref{Lemma useful ordinals are critical points}). Since the final line can be expressed within $L[\Reg]$, we have shown thta $U^*\in L[\Reg]$.
	
	Finally, note that within $L[\Reg]$ (or indeed $V$) $\delta>\omega$ is regular, so $U^*$ is $\omega$ complete. Further in $L[Reg]$, $\tilde{M}_{\tilde{i}}$ is an iterate of a universal weasel, by measures below $\delta$. Hence, by \cite[7.3.7]{zemanInnerModelsAndLargeCardinals} we know $U^*$ will appear on the extender sequence of $\tilde{M}_{\tilde{i}}$. So $\tilde{M}_{\tilde{i}}$ contains a measurable $\delta$ such that
	$$H_{\delta^+}^{\tilde{M}_{\tilde{i}}}\vDash \varphi(\tilde{\gamma}_1,\ldots,\tilde{\gamma_n},\delta)$$
		Contradiction!
\end{proof}

\begin{corollary}
	Assume $\Reg$ is Mahlo. Then $\Ome\in K^{L[\Reg]}$ for all $\alpha\leq \infty$.
\end{corollary}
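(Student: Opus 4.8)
The plan is to read off the corollary from Theorem~\ref{theorem friendly machetes} and Proposition~\ref{proposition machete below muind} together with a standard absorption property of core models. Assume $\Ome$ exists (interpreting $\Ome$ as $\Ominf$ when $\alpha=\infty$; if it does not exist there is nothing to prove). By Proposition~\ref{proposition machete below muind} we have $\Ome <^* \Mmuind$, so Theorem~\ref{theorem friendly machetes}, applied with $M=\Ome$ and using the hypothesis that $\On$ is Mahlo (equivalently, $\Reg$ is stationary), gives
$$\Ome <^* K^{L[\Reg]}.$$
It remains to promote ``$<^*$-below'' to ``is an element of'', which is the only additional ingredient.

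For this I would invoke the standard fact that a core model is closed under the mice lying $<^*$-below it: if $W$ is the core model of a $\ZFC$-model and $M$ is a sound iterable premouse below $O^{\text{Sword}}$ (so in the linear-iteration region) with $M<^* W$, then $M\in W$; see \cite[Chapter~7]{zemanInnerModelsAndLargeCardinals}. Applying this with $W=K^{L[\Reg]}$ and $M=\Ome$ yields $\Ome\in K^{L[\Reg]}$. When $\alpha=\infty$ this is particularly transparent, since $\Ominf$ is countable, hence coded by a real, and $K^{L[\Reg]}$ contains every real coding a countable mouse below it; for $\alpha<\infty$ one uses the same absorption at the level of the set mouse $\Ome$ directly, recalling from Proposition~\ref{proposition basic properties of machetes} that $\rho^1_{\Ome}\leq\alpha$ and $|\Ome|=\max(\aleph_0,|\alpha|)$, so $\Ome$ has a small code.

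The one genuinely delicate point --- and what I expect to be the main obstacle --- is an absoluteness check: $\Ome$ is defined in $V$, whereas the conclusion places it inside the inner model $K^{L[\Reg]}$. One must verify that the defining conditions of $\Ome$ (being an active sound mouse below $O^{\text{Sword}}$ whose top measurable has Cantor--Bendixson rank $\alpha$ in its class of measurables, together with $<^*$-minimality) are computed the same way in $V$ and in any transitive $\ZFC$-model carrying out the relevant comparisons correctly; this is routine for premice in the simple region considered here, as iterability and comparison are absolute. Granting this, $K^{L[\Reg]}$'s own least such mouse is the genuine $\Ome$, and the absorption property finishes the argument. As a sanity check, Theorem~\ref{theorem L[R]} already places an iterate $M_{\On}$ of $\Ome$ inside $L[\Reg]$ with $R$-definable extender sequence; the $K$-absorption route is, however, both cleaner and uniform across all $\alpha\leq\infty$.
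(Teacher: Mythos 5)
Your proposal is correct and follows essentially the same route as the paper: Proposition~\ref{proposition machete below muind} plus Theorem~\ref{theorem friendly machetes} give $\Ome <^* K^{L[\Reg]}$, and the step from ``$<^*$'' to membership is exactly the standard absorption argument the paper spells out (coiterate with $K^{L[\Reg]}$ after iterating its small measurables past the least measurable of $\Ome$, and recover the sound mouse $\Ome$ from its first standard code $A^{1,p}\subseteq\rho^1_{\Ome}$, which lands in $K^{L[\Reg]}$), using the same facts from Proposition~\ref{proposition basic properties of machetes} you invoke. The closing absoluteness worry is unnecessary: the corollary only asserts that the $V$-defined mouse is an element of $K^{L[\Reg]}$, which follows once its code is, so no internal identification of $\Ome$ inside $K^{L[\Reg]}$ is needed.
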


\begin{proof}
This is a standard argument.		Proposition \ref{proposition machete below muind} tells us that $\Ome <^* \Mmuind$, so Theorem \ref{theorem friendly machetes} says $\Ome <^* K^{L[\Reg]}$. Let $\rho = \rho_1^{\Ome}$. Recall from Proposition \ref{proposition basic properties of machetes} that $\rho_1^{\Ominf} = \omega$, and that if $\alpha<\infty$ then $\rho_1^{\Ome}\leq \alpha$ and $\Ome$ contains no measurables $\leq \alpha$. Let $p$ be the first standard parameter of $\Ome$, and let $A^{1,p}\subset \omega \times \mbox{ }^{<\omega}[\rho]$ be its first standard code.

The coiteration of $\Ome$ with $K^{L[\Reg]}$ begins by iterating the  least measurable of $K^{L[\Reg]}$ below $\alpha$ (if any) until it is greater than or equal to the least measurable of $\Ome$, which we know is above $\rho$. The resulting weasel, which we may call $M_i$, is definable within $K^{L[\Reg]}$ (since we just gave an explicit process for generating it). But now $A^{1,p}\in \Sigma_\omega(\Ome)\cap \mathcal{P}(\rho) \subset \Sigma_\omega(M_i)\cap \mathcal{P}(\rho) \subset K^{L[\Reg]}$. Hence $A^{1,p}$, and so $\Ome $, are in
$ K^{L[\Reg]}$.
	
\end{proof}

\section{Open Questions}





We saw in Section 5 that all mice below $\Mmuind$ are $<^* K^{L[\Reg]}$ (assuming $\Reg$ is rich enough). However, the converse is not true. Indeed, one can show that  the methods of Theorem \ref{theorem friendly machetes} apply using  $\Mmuindtwo$,   the least sound mouse which reflects all second order formulae, to give that $\Mmuind <^* K^{L[\Reg]}$. Similar results hold for languages of higher orders.

\begin{question}
	Assume $On$ is Mahlo. Can we characterise the least mouse $M$ such that $K^{L[\Reg]} <^* M$?
\end{question}

Theorem \ref{theorem L[R]} shows that $L[\Reg\setminus \Hype]$ can be generated from a machete mouse, and again it is possible to adapt our techniques to show (for example) that $L[\Reg \setminus \Hype(\Hype)]$ can be generated in the same way, where $\Hype(\Hype)$ denotes the ``hyperhyperinaccessibles'', i.e. those hyperinaccessibles which are equal to their own CB rank in $\Hype$. In general, it seems that given a sufficiently large mouse, we can generate $L[\Reg \setminus H]$ for many suitable classes $H$ of hyperinaccessible cardinals in $\Reg$. However, it is open whether we can eliminate this definable class $H$ entirely.

\begin{question} Assume $On$ is Mahlo.
	Can we prove that $L[\Reg]$ is a generic extension of an iterate of O-Sword? 
\end{question}

It is known by current work of Schindler {\em  et al.}, that any mouse $M$ that projects to $\omega$ and is $<^\ast $ the least active mouse $N$ in which the $N$-measurables below $\kappa_N$ are $N$-stationary, is in $C^\ast$. Here  $C^\ast$  is the ``$\cof$-$\omega$'' model, $L[\cof_\omega]$ \cite{kennedyMagidor_innermodels}. Hence:

\begin{question} Assume $On$ is Mahlo.
What is the relationship between the reals of $C^\ast$ and $L[\Reg]$?	Is $\Mmuind $ in $C^\ast$?	
\end{question}

It is easy to see that the mouse $\Mmuind$ is $>^\ast$ than the mouse $ N$ just mentioned.

\bibliographystyle{plain}
\bibliography{references}
\end{document}